\gdef\eeaa#1pt{#1}}      %
\def\accentadjtext#1{\setbox0\hbox{$#1$}\kern   %
                \expandafter\eeaa\the\fontdimen1\textfont1 \ht0 }
\def\accentadjscript#1{\setbox0\hbox{$#1$}\kern %
                \expandafter\eeaa\the\fontdimen1\scriptfont1 \ht0 }
\def\accentadjscriptscript#1{\setbox0\hbox{$#1$}\kern   %
                \expandafter\eeaa\the\fontdimen1\scriptscriptfont1 \ht0 }
\def\accentadjtextback#1{\setbox0\hbox{$#1$}\kern       %
                -\expandafter\eeaa\the\fontdimen1\textfont1 \ht0 }
\def\accentadjscriptback#1{\setbox0\hbox{$#1$}\kern     %
                -\expandafter\eeaa\the\fontdimen1\scriptfont1 \ht0 }
\def\accentadjscriptscriptback#1{\setbox0\hbox{$#1$}\kern %
                -\expandafter\eeaa\the\fontdimen1\scriptscriptfont1 \ht0 }
\def\itoverline#1{{\mathsurround0pt\mathchoice
        {\rlap{$\accentadjtext{\displaystyle #1}
                \accentadjtext{\vrule height1.593pt}
                \overline{\phantom{\displaystyle #1}
                \accentadjtextback{\displaystyle #1}}$}{#1}}
        {\rlap{$\accentadjtext{\textstyle #1}
                \accentadjtext{\vrule height1.593pt}
                \overline{\phantom{\textstyle #1}
                \accentadjtextback{\textstyle #1}}$}{#1}}
        {\rlap{$\accentadjscript{\scriptstyle #1}
                \accentadjscript{\vrule height1.593pt}
                \overline{\phantom{\scriptstyle #1}
                \accentadjscriptback{\scriptstyle #1}}$}{#1}}
        {\rlap{$\accentadjscriptscript{\scriptscriptstyle #1}
                \accentadjscriptscript{\vrule height1.593pt}
                \overline{\phantom{\scriptscriptstyle #1}
                \accentadjscriptscriptback{\scriptscriptstyle #1}}$}{#1}}}}
\newcommand{\iol}{\itoverline}
\newcommand{\ch}[1]{{\mbox{\raise 1pt\hbox{\large$\chi$}}}_{\lower 1pt\hbox{$\scriptstyle #1$}}}
\def\1{\raisebox{2pt}{\rm{$\chi$}}}
\newtheorem{theorem}{Theorem}[section]
\newtheorem{lemma}[theorem]{Lemma}
\theoremstyle{definition}
\newtheorem{definition}[theorem]{Definition}
\newtheorem{remark}[theorem]{Remark}
\newtheorem{example}[theorem]{Example}
\newcommand{\R}{{\mathbb R}}
\newcommand{\N}{{\mathbb N}}
\newcommand{\D}{{\mathcal D}}
\newcommand\diam{\operatorname{diam}}
\def\1{\raisebox{2pt}{\rm{$\chi$}}}
\def\vint_#1{\mathchoice%
        {\mathop{\kern 0.2em\vrule width 0.6em height 0.69678ex depth -0.58065ex
                \kern -0.8em \intop}\nolimits_{\kern -0.4em#1}}%
        {\mathop{\kern 0.1em\vrule width 0.5em height 0.69678ex depth -0.60387ex
                \kern -0.6em \intop}\nolimits_{#1}}%
        {\mathop{\kern 0.1em\vrule width 0.5em height 0.69678ex
            depth -0.60387ex
                \kern -0.6em \intop}\nolimits_{#1}}%
        {\mathop{\kern 0.1em\vrule width 0.5em height 0.69678ex depth -0.60387ex
                \kern -0.6em \intop}\nolimits_{#1}}}
\def\vintslides_#1{\mathchoice%
        {\mathop{\kern 0.1em\vrule width 0.5em height 0.697ex depth -0.581ex
                \kern -0.6em \intop}\nolimits_{\kern -0.4em#1}}%
        {\mathop{\kern 0.1em\vrule width 0.3em height 0.697ex depth -0.604ex
                \kern -0.4em \intop}\nolimits_{#1}}%
        {\mathop{\kern 0.1em\vrule width 0.3em height 0.697ex depth -0.604ex
                \kern -0.4em \intop}\nolimits_{#1}}%
        {\mathop{\kern 0.1em\vrule width 0.3em height 0.697ex depth -0.604ex
                \kern -0.4em \intop}\nolimits_{#1}}}
\newcommand{\intav}{\vint}
\title[Limits at Infinity]{Limits  at infinity for  Haj{\l}asz--Sobolev functions in  metric spaces}
\author[A. Agarwal]{Angha Agarwal}   %
\address[A.A.]{University of Jyvaskyla, Department of Mathematics and Statistics, P.O. Box 35, FI-40014 University of Jyvaskyla, Finland}
\email{angha.a.agarwal@jyu.fi}
\author[A.V.\! V\"ah\"akangas]{Antti V. V\"ah\"akangas}
\address[A.V.V.]{University of Jyvaskyla, Department of Mathematics and Statistics, P.O. Box 35, FI-40014 University of Jyvaskyla, Finland} 
\email{antti.vahakangas@iki.fi}
\keywords{Analysis on metric spaces,   Haj{\l}asz--Sobolev function, limit at infinity, capacity}
\subjclass[2020]{46E36, 31C15, 31B15, 31B25}
\thanks{The authors are  grateful to Pekka Koskela for 
suggesting the topic   of the paper   and for fruitful discussions. A. A. was supported by the Research Council of Finland Centre of Excellence in Randomness and Structures (Project number 364210).}
\begin{document}

\begin{abstract}
We study limits at infinity for homogeneous Haj{\l}asz--Sobolev functions defined on uniformly perfect metric spaces equipped with a doubling measure. We prove that a quasicontinuous representative of such a function has a pointwise limit at infinity outside an exceptional set, defined in terms of a variational relative capacity. 
Our framework refines earlier approaches that relied on Hausdorff content rather than relative capacity, and it extends previous results for homogeneous Newtonian and fractional Sobolev functions.
\end{abstract}

\maketitle

\section{Introduction}
In this paper, we study limits at infinity for functions in the homogeneous Haj\l asz--Sobolev space within the framework of metric measure spaces. 
Related problems have been studied for functions in the first-order Sobolev space in \cite{ErKoNg,Fefferman,KlKoNg,KoNg,Portnov,Upenskii}. Similar questions have been addressed for functions of bounded variation in \cite{PaKh}, and for functions in fractional Sobolev spaces in \cite{AKM}.

  We briefly review the results in \cite{KlKoNg,AKM} that are most relevant to our work. Kline, Koskela, and Nguyen \cite[Theorem~1.1]{KlKoNg} obtained limits at infinity for functions in the first-order homogeneous Newton--Sobolev space outside thin exceptional sets, which have sufficiently small Hausdorff contents in dyadic annuli at infinity. Their results rely on the assumption that the ambient metric measure space is
complete, equipped with a $d$-regular measure, and supports a suitable Poincar\'e inequality. In a recent work, Agarwal, Koskela, and Mohanta \cite{AKM} established the existence of limits at infinity for  functions in homogeneous fractional Sobolev spaces, outside thin exceptional sets characterized by small Hausdorff content at infinity. In their setting, the metric space is $\R^n$ equipped with the Euclidean metric and Lebesgue measure.

Extending the approaches in \cite{KlKoNg} and \cite{AKM}, we demonstrate the existence of limits at infinity for functions in the homogeneous Haj\l asz--Sobolev space $\dot{M}^{s,p}(X,d,\mu)$, where  $(X,d,\mu)$ is a metric measure space, $0<s\leq1$ and $0<p<\infty$. 
A priori, we do not assume  that the underlying metric space $X$ is connected. Instead, it is assumed to be uniformly perfect and equipped with a doubling measure $\mu$,  providing the necessary geometric and measure-theoretic part of our framework described in Section \ref{e.preliminaries}.
The choice of the homogeneous Haj\l asz--Sobolev space $\dot{M}^{s,p}$  in our framework, defined in Section \ref{s.function_spaces},  is general enough to encompass a wide range of Sobolev-type spaces. In particular, both the homogeneous Newton--Sobolev and fractional Sobolev spaces admit a bounded embedding into the homogeneous Haj\l asz--Sobolev space considered here. For the former embedding to hold, one needs to assume a suitable Poincar\'e inequality as in \cite{KlKoNg}, forcing the space to be connected. We refer to Remark~\ref{r.HN} for further discussion. The corresponding embedding for homogeneous fractional Sobolev spaces is developed in  Section~\ref{s.fract_app}.

 Throughout the paper, we use median values instead of integral averages to facilitate the analysis of functions in the homogeneous Haj\l asz--Sobolev space $\dot{M}^{s,p}$. Such a function may fail to be integrable on balls for small values of $s$ and $p$, hence the corresponding integral averages are not well-defined. The use of medians in this context is well established; we refer to \cite{AKM,GoKoZh,HeKoTu,JaTo} for related developments. See Sections  \ref{median definition} and \ref{medians at infinity} for a further discussion.

A distinctive feature of our approach in studying limits at infinity is a novel use of variational relative capacity, as introduced in Definition~\ref{d.relative}. In one of our main results, Theorem~\ref{t.main}, we establish conditions, under which a quasicontinuous representative of a function in the homogeneous Haj\l asz--Sobolev space admits a limit of the form
\[
\lim_{\substack{d(x,O)\to \infty \\ x\in X\setminus E(O)}} u(x)\,.
\]
Here $O\in X$ is a fixed basepoint and $E(O)$ is an exceptional set, outside
of which the limit of the function $u$  exists.  
 To quantify the smallness of such an exceptional set  at infinity, we utilize the variational relative capacity.  These exceptional sets in our framework are called $(s,p)$-thin at infinity, and they are discussed in Section~\ref{s.thin}.
 The novel use of relative capacity, instead of Hausdorff content as in \cite{KlKoNg,AKM}, provides a finer exceptional set, resulting in sharper and more precise analytical conclusions. We refer the reader to Section~\ref{s.h} and Theorem~\ref{e.hthin} for detailed comparisons. As an application, we extend and complement the results in \cite{KlKoNg} and \cite{AKM}. For  precise formulations, we refer to Remark~\ref{r.HN} and Theorem~\ref{W,s,p,q 1}, respectively.

\section{Metric spaces}\label{e.preliminaries}

We introduce our standing assumptions on
the metric space and the measure, which are maintained throughout the paper.
For further details, we refer to \cite{BB,MR1800917,MR3363168}.
 Let  $(X,d,\mu)$ be a  metric measure space with $\diam(X)=\infty$. Here $\mu$ is a positive  complete Borel measure on $X$ such that \[0<\mu(B(x,r))<\infty\] for all $x\in X$ and $r>0$, where  $B(x,r)=\{y\in X\,:\, d(y,x)<r\}$  is the open ball of radius $r>0$ and center $x\in X$.
The measure $\mu$ is  assumed to be doubling, that is, there exists a constant $c_{\mu}\geq1$ such that, whenever $x\in X$ and $r>0$, we have
\begin{equation}\label{doubling}
    \mu(B(x,2r))\leq c_{\mu}\,\mu(B(x,r)).
\end{equation}
We also assume that 
the measure $\mu$ satisfies
the reverse doubling condition, that is,
there are constants  $0<c_R<1$   and $\kappa>1$   such that
\begin{equation}\label{reverse doubling}
\mu(B(x,r))\le c_R\,   \mu(B(x,\kappa r))  
\end{equation}
for all $x\in X$ and all $r>0$.

The above
assumptions are tacitly held throughout the paper. The space $X$ is separable under these assumptions,
see \cite[Proposition~1.6]{BB}.
It  follows from the reverse doubling condition~\eqref{reverse doubling} that  $\mu(X)=\infty$. 
If $X$ is connected, then \eqref{reverse doubling} for any $\kappa>1$ 
follows from the doubling condition \eqref{doubling} and the
unboundedness of $X$; 
see for instance~\cite[Lemma~3.7]{BB}. However,
we do not assume in this paper that $X$ is connected  unless otherwise specified. 
Since $\mu$ is  doubling  in our setting, the reverse doubling condition \eqref{reverse doubling} is equivalent
with the so-called uniform perfectness of $X$, we refer to 
\cite[Section 11]{MR1800917} for background on this condition and examples.

We  also need the following iterated versions
of the doubling and reverse doubling conditions.
Iteration of the doubling condition \eqref{doubling} gives an exponent $Q>0$ and a constant $c_{Q}>0$, both only depending on $c_{\mu}$, such that
\begin{equation}\label{iteration doubling}
   \frac{\mu(B(y,r))}{\mu(B(x,r_0))}\geq c_{Q}\left(\frac{r}{r_0}\right)^{Q}
\end{equation}
whenever $x\in X$, $y\in B(x,r_0)$ and $0<r<r_0<\infty$. Condition \eqref{iteration doubling} holds for $Q=\log_2 c_\mu$, we refer to \cite[Lemma 3.3]{BB} for details.
On the other hand, the doubling property and iteration of the reverse doubling condition \eqref{reverse doubling} gives an exponent $\sigma>0$ and a constant $c_{\sigma}>0$, both only depending on $c_R$, such that the  quantitative reverse doubling condition
\begin{equation}\label{iteration reverse}
   \frac{\mu(B(x,r))}{\mu(B(x,r_0))}\leq c_{\sigma}\left(\frac{r}{r_0}\right)^{\sigma}
\end{equation}
holds for all $x\in X$ and $0<r<r_0<\infty$.  We refer
to the proof of \cite[Corollary 3.8]{BB}.

We use the following familiar notation: \[
\vint_{E} u(y)\,d\mu(y)=\frac{1}{\mu(E)}\int_E u(y)\,d\mu(y)
\]
is the integral average of a non-negative measurable function $u$ over a measurable set $E\subset X$ 
with a finite positive measure, that is, $0<\mu(E)<\infty$. 
The closure of a set $E\subset X$ is denoted by $\iol{E}$. In particular, if $B=B(x,r)\subset X$ is a ball,
then the notation $\iol{B}=\iol{B(x,r)}$ refers to the closure of the ball $B$.  
The characteristic function of a set $E\subset X$ is denoted by $\mathbf{1}_{E}$; 
that is, $\mathbf{1}_{E}(x)=1$ if $x\in E$
and $\mathbf{1}_{E}(x)=0$ if $x\in X\setminus E$.

  Next we define certain annuli, whose scaling
is determined by the constant $\kappa>1$ in \eqref{reverse doubling}. These annuli
are often used in this paper, and therefore
we use the $\kappa$-scaling frequently.  
Assume that $O\in X$ and $r>0$. Then we write
  $A_r(O)=B(O,\kappa r)\setminus B(O,r)$.   When $\Lambda\ge 1$ is also given, we write
\[\Lambda A_r(O)=  B(O,\Lambda \kappa r)  \setminus B(O,r/\Lambda)\,.\]
It follows from \eqref{reverse doubling} that 
\begin{equation}\label{annuli ball comparison}
  (1-c_R) \mu(B(O, \Lambda \kappa r ))\leq\mu(\Lambda A_r(O))\leq \mu(B(O,\Lambda \kappa r)) 
\end{equation}
for all $O\in X$, $r>0$ and $\Lambda \ge 1$. The point $O\in X$ is called  the basepoint.

We will use the following notational convention:
$C({\ast,\dotsb,\ast})$ denotes a positive constant which quantitatively 
depends on the quantities indicated by the $\ast$'s but whose actual
value can change from one occurrence to another, even within a single line.

\section{Medians}\label{median definition}

   We work with measurable functions that need not be integrable on balls.
Therefore we use medians instead of integral averages.
In this section, we  recall useful lemmata for medians from \cite{AKM,GoKoZh,HeKoTu}. We also refer to \cite{Fuji,JaTo}.  

Let $L^0(X)$ denote the set of all
measurable functions $u\colon X\to [-\infty,\infty]$ that are finite almost everywhere on $X$.
Let  $u\in L^0(X)$ and let  $E\subset X$ be a set of finite positive measure. Define the median,  or the  $\frac{1}{2}$-median, of a function $u$ over $E$ as
\[
m_u(E)= \inf\left\{ a\in\R \,:\, \mu\left(\{x\in E \,:\, u(x)>a\}\right) < \frac{\mu(E)}{2}\right\}.
\]
By using continuity of measure, it is easy to show that $-\infty < m_u(E)<\infty$, that is, $m_u(E)$ is finite.
We also define the sub- and superlevel sets $E^{u,-}$ and $E^{u,+}$ as follows
\begin{equation}\label{Eplus E minus}
    E^{u,-}= \left\{ x\in E \,:\, u(x)\leq m_u(E) \right\}
\quad 
\text{and}
\quad
E^{u,+}= \left\{ x\in E \,:\, u(x)\geq m_u(E) \right\}.
\end{equation}
It is again easy to show that $2\mu(E^{u,-})\geq \mu(E)$ and $2\mu(E^{u,+})\geq \mu(E)$.

 We recall the following basic
 properties of medians from \cite[Lemma~2.7]{HeKoTu}.

\begin{lemma}\label{median prps}
Let $u\in L^0(X)$ and let  $E\subset X$ be a set of finite positive measure.
Then the median has the following properties:
  \begin{itemize}
      \item[(a)] $|m_u(E)|\leq m_{|u|}(E)$.
      \item[(b)] If $c\in\R$, then $m_u(E)+c= m_{u+c}(E)$.
      \item[(c)] If $0<p<\infty$, then
      \begin{equation*}
          m_{|u|}(E)\leq\left(2\intav_E|u(x)|^p\, d\mu(x)\right)^\frac{1}{p}.
      \end{equation*}
  \end{itemize}
\end{lemma}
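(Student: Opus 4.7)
My plan is to verify the three properties by working directly from the definition
\[
m_u(E)=\inf\left\{a\in\R\,:\,\mu\bigl(\{x\in E:u(x)>a\}\bigr)<\mu(E)/2\right\},
\]
together with the complementary observation that if $\mu(\{x\in E:u(x)>b\})\ge \mu(E)/2$, then $b\le m_u(E)$.

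For (a), set $a=m_{|u|}(E)$; then for every $a'>a$ we have $\mu(\{x\in E:|u(x)|>a'\})<\mu(E)/2$. Since $\{u>a'\}\subset\{|u|>a'\}$, the same bound holds for $u$, giving $m_u(E)\le a'$ and hence $m_u(E)\le a$. For the lower bound, let $b<-a$; then $\{u\le b\}\subset\{|u|>a\}$ (the strict inequality $|u|>a$ holds whenever $u\le b<-a$, except possibly for $|u|=a$, which one handles by taking $a''>a$ close to $a$ and redoing the estimate with $\{|u|>a''\}$). Consequently $\mu(\{u>b\})\ge \mu(E)-\mu(\{|u|>a''\})>\mu(E)/2$, so $b\le m_u(E)$. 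Letting $b\uparrow -a$ yields $-a\le m_u(E)$, and combining both bounds gives $|m_u(E)|\le m_{|u|}(E)$.

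Property (b) is immediate from the definition: for every $a\in\R$,
\[
\{x\in E:(u+c)(x)>a\}=\{x\in E:u(x)>a-c\},
\]
so the set appearing in the definition for $u+c$ at level $a$ is exactly the one for $u$ at level $a-c$. Taking the infimum over the admissible $a$'s therefore shifts $m_u(E)$ by $+c$.

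For (c), set $A=\bigl(2\,\vint_E|u|^p\,d\mu\bigr)^{1/p}$. For any $a'>A$, Chebyshev's inequality gives
\[
\mu\bigl(\{x\in E:|u(x)|>a'\}\bigr)\le \frac{1}{(a')^p}\int_E|u|^p\,d\mu
<\frac{1}{A^p}\int_E|u|^p\,d\mu=\frac{\mu(E)}{2},
\]
so $m_{|u|}(E)\le a'$; letting $a'\downarrow A$ yields $m_{|u|}(E)\le A$, which is the claim.

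The routine part is the symmetric chasing of strict versus non-strict inequalities; the only genuinely subtle step is in (a), where one must avoid the pitfall that $\{u\le -a\}$ need not be contained in $\{|u|>a\}$ when $u=-a$ on a set of positive measure. This is why I take $a''>a$ and pass to the limit, a standard device that keeps every inequality strict until the end.
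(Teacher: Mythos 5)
Your proof is correct. Note that the paper does not actually prove this lemma; it simply cites \cite[Lemma~2.7]{HeKoTu}, so your direct verification from the definition of the median is self-contained where the paper only gives a reference, and all three parts check out: the upper bound in (a) via $\{u>a'\}\subset\{|u|>a'\}$, the lower bound via the complementary observation, the level-set translation in (b), and Chebyshev in (c). One small remark on (a): the parenthetical worry about ``$|u|=a$'' is a red herring, since $u(x)\le b<-a$ already forces $|u(x)|=-u(x)>a$ strictly; the genuine reason you need to pass to $a''>a$ is that the infimum $a=m_{|u|}(E)$ need not itself be an admissible level (one only gets $\mu(\{|u|>a\})\le\mu(E)/2$ by right-continuity of $t\mapsto\mu(\{|u|>t\})$, not the strict inequality). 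Your device of choosing $a''\in(a,-b)$ handles exactly this, so the argument goes through as written.
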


Next we give two  lemmata for
differences involving medians.   In the case of $\R^n$, these results can already be found in 
\cite[Lemma 2.4, Lemma 2.5]{AKM}. We recall the  proofs for convenience.  

\begin{lemma}\label{med_another lemma}
Let $u\in L^0(X)$ and let $E\subset X$ be a set of finite positive measure. Then, for every $c\in\R$, there exists a set $\tilde{E}\subset E$ such that $2\mu(\tilde{E})\geq \mu(E)$ and 
		$$|m_u(E)-c|\leq|u(x)-c|$$whenever $x\in\tilde{E}$.
\end{lemma}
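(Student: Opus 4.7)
The plan is a simple case analysis based on the position of $c$ relative to $m_u(E)$, using the sub- and superlevel sets $E^{u,-}$ and $E^{u,+}$ defined in \eqref{Eplus E minus}, both of which are recorded to have measure at least $\mu(E)/2$.

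Set $m=m_u(E)$. I would distinguish the two cases $c\le m$ and $c\ge m$. In the case $c\le m$, I choose $\tilde{E}=E^{u,+}$, so that $2\mu(\tilde{E})\ge\mu(E)$ by the property recalled right after \eqref{Eplus E minus}. For any $x\in \tilde{E}$ we have $u(x)\ge m\ge c$, and hence
\[
|u(x)-c|=u(x)-c\ge m-c=|m-c|=|m_u(E)-c|.
\]
In the symmetric case $c\ge m$, I choose $\tilde{E}=E^{u,-}$, which again satisfies $2\mu(\tilde{E})\ge\mu(E)$. For any $x\in\tilde{E}$ we have $u(x)\le m\le c$, and so
\[
|u(x)-c|=c-u(x)\ge c-m=|m_u(E)-c|.
\]
Combining the two cases yields the claim.

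There is no real obstacle here; the only thing to be careful about is choosing $\tilde{E}$ correctly depending on the sign of $m_u(E)-c$, and invoking the right one of the two measure inequalities $2\mu(E^{u,\pm})\ge \mu(E)$ already noted in the text preceding the lemma.
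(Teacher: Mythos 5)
Your argument is correct and is essentially identical to the paper's own proof: the same case split on the sign of $m_u(E)-c$, the same choice of $\tilde{E}=E^{u,+}$ or $E^{u,-}$, and the same one-line inequality in each case. Nothing to add.
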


\begin{proof}
    Fix $c\in\R$. First suppose that $m_u(E)\geq c$. 
    Let $\tilde E=E^{u,+}\subset E$. Then $2\mu(\tilde E)\ge \mu(E)$ and,
    for every $x\in \tilde E$, we have
\begin{equation*}
    \lvert m_u(E)-c\rvert= m_u(E)-c \leq u(x) -c\leq |u(x)-c|\,.
\end{equation*}
Thus, the claim holds in the case $m_u(E)\geq c$.
 Similarly, in the case $m_u(E)< c$ we define $\tilde{E}=E^{u,-}$. Adapting the above argument yields the desired conclusion also in this case. 
\end{proof}

\begin{lemma}\label{l1}
Let $u\in L^0(X)$ and let $E,F\subset X$ be sets of finite positive measure. Then there exists measurable subsets $\tilde{E}\subset E$ and $\tilde{F}\subset F$  such that $2\mu(\tilde{E})\geq\mu(E)$, $2\mu(\tilde{F})\geq\mu(F)$ and 
$$|m_u(E)-m_u(F)|\leq|u(x)-u(y)|$$for all $x\in\tilde{E}$ and  $y\in\tilde{F}$.
\end{lemma}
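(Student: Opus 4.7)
The plan is to mimic the single-set argument of Lemma~\ref{med_another lemma}, but now split into two cases according to the ordering of $m_u(E)$ and $m_u(F)$ and choose the appropriate sub- or super-level sets on each of $E$ and $F$ so that the differences line up with a definite sign.

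Suppose first that $m_u(E)\ge m_u(F)$. Then I would set
\[
\tilde E = E^{u,+} = \{x\in E\,:\, u(x)\ge m_u(E)\}\quad\text{and}\quad \tilde F = F^{u,-} = \{y\in F\,:\, u(y)\le m_u(F)\}\,.
\]
The measure bounds $2\mu(\tilde E)\ge \mu(E)$ and $2\mu(\tilde F)\ge \mu(F)$ are part of the basic properties recalled just after \eqref{Eplus E minus}. For any $x\in\tilde E$ and $y\in\tilde F$, the chain
\[
|m_u(E)-m_u(F)| = m_u(E)-m_u(F) \le u(x)-u(y) \le |u(x)-u(y)|
\]
gives the desired pointwise inequality.

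The opposite case $m_u(E)<m_u(F)$ is handled symmetrically by choosing $\tilde E=E^{u,-}$ and $\tilde F=F^{u,+}$; the same computation yields
\[
|m_u(E)-m_u(F)| = m_u(F)-m_u(E) \le u(y)-u(x) \le |u(x)-u(y)|\,.
\]

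There is no real obstacle here: once the right level sets are chosen in each case, the argument is just a direct application of the definitions of $E^{u,\pm}$ and $F^{u,\pm}$, together with the standard lower bounds on their measures. The only thing worth emphasising is that the sign of $m_u(E)-m_u(F)$ dictates which of the superlevel or sublevel sets to pair together, so that the difference $u(x)-u(y)$ has the right sign to dominate $|m_u(E)-m_u(F)|$ without introducing an absolute value that could reverse the inequality.
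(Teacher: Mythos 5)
Your proof is correct and follows essentially the same argument as the paper: a case split on the sign of $m_u(E)-m_u(F)$, pairing the superlevel set of one with the sublevel set of the other so that $u(x)-u(y)$ dominates the median difference with the right sign, and invoking the standard measure bounds $2\mu(E^{u,\pm})\ge\mu(E)$. The only difference is the (immaterial) ordering of the two cases.
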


\begin{proof}
First, assume that \[m_u(E)\leq m_u(F)\,.\] Let $\tilde{E}=E^{u,-}\subset E$ and $\tilde{F}=F^{u,+}\subset F$.
Then
$2\mu(\tilde{E})\geq \mu({E})$ and  $2\mu(\tilde{F})\geq \mu({F})$.
Also, for all $x\in\tilde{E}$ and $y\in\tilde{F}$, we get  from \eqref{Eplus E minus}  that
\begin{align*}
|m_u(E)-  m_u(F)|= m_u(F)-  m_u(E)
\leq u(y)-u(x)
= |u(x)-u(y)|.
\end{align*}
Thus, the claim holds in the case $m_u(E)\leq m_u(F)$.
 Similarly, in the case \[m_u(F)< m_u(E)\] we define $\tilde{E}=E^{u,+}$ and $\tilde{F}=F^{u,-}$. Clearly, adapting the above argument yields the desired conclusion also in this case. \end{proof}

\section{Haj{\l}asz--Sobolev functions and quasicontinuity}\label{s.function_spaces}

  We use Haj{\l}asz--Sobolev functions \cite{Ha,MR2039955} and the associated notion of Haj{\l}asz capacity \cite{MR1752853,HeKoTu}. This  capacity is used
to define quasicontinuous Haj{\l}asz--Sobolev functions, whose pointwise limits at infinity are of main interest to us in this work.  
	
	\begin{definition}\label{d.haj}
    Let $0<s\leq1$ and let $u\in L^0(X)$. A measurable function $g\colon X\to [0,\infty]$ is an $s$-gradient of $u$, if there exists a set $E\subset X$ with $\mu(E)=0$ such that for all $x,y\in X\setminus E$,
    \begin{equation}\label{s grad define}
       |u(x)-u(y)|\leq d(x,y)^s(g(x)+g(y))\,. 
    \end{equation}
    The collection of all $s$-gradients of $u$ is denoted by $\mathcal{D}_s(u)$.
    \end{definition}
 
 \begin{definition}
    Let $0<s\le 1$ and $0<p<\infty$. The homogeneous  Haj{\l}asz--Sobolev  space $\dot{M}^{s,p}(X)$ consists of functions $u\in L^0(X)$ such that    $\D_s(u)\cap L^p(X)\not=\emptyset$. This vector space is equipped with the quasi-seminorm  
\begin{equation*}
  \|u\|_{\dot{M}^{s,p}(X)}=\inf_{g\in \D_s(u)}\|g\|_{L^p(X)} \,.  
\end{equation*} 
The  Haj{\l}asz--Sobolev  space ${M}^{s,p}(X)$ is  $\dot{M}^{s,p}(X)\cap L^p(X)$ equipped with the quasi-seminorm
\begin{equation*}
\|u\|_{{M}^{s,p}(X)}=\|u\|_{L^p(X)}+\|u\|_{\dot{M}^{s,p}(X)}.
\end{equation*}
\end{definition}

  Throughout this paper, we will consider $L^0(X)$, $\dot{M}^{s,p}(X)$ and ${M}^{s,p}(X)$ to be vector spaces of functions defined everywhere, instead of
equivalence classes defined $\mu$-almost everywhere. 
We nevertheless say that   $v\in L^0(X)$ is a representative of a function $u\in L^0(X)$, if $v(x)=u(x)$ holds for $\mu$-almost every $x\in X$.
We remark that $\|v\|_{\dot{M}^{s,p}(X)}=\|u\|_{\dot{M}^{s,p}(X)}$ and $\|v\|_{{M}^{s,p}(X)}=\|u\|_{{M}^{s,p}(X)}$ if $v$ is a representative of $u$.

We are mainly interested in fine properties of Haj{\l}asz--Sobolev functions, and therefore
we also need the notion of capacity,   see \cite{MR1752853,HeKoTu}.  

 \begin{definition}\label{d.h_cap}
Let $0<s\leq1$ and $0<p<\infty$. 
The $M^{s,p}$-capacity of a set $E\subset X$ is 
\begin{equation*}
   \mathrm{Cap}_{M^{s,p}}(E)=\inf\left\{\|u\|_{M^{s,p}(X)}^p\,:\, u\in M^{s,p}(X)\text{ and } u\geq1 \text{ on a neighbourhood of $E$}\right\}.
\end{equation*}
If there are no test functions that satisfy the above conditions, we set $\mathrm{Cap}_{M^{s,p}}(E)=\infty$. We say that a property holds $M^{s,p}$-quasieverywhere, if it holds outside a set of $M^{s,p}$-capacity zero.
\end{definition}

Now we list some of the basic properties of the $M^{s,p}$-capacity from \cite{HeKoTu}.

\begin{lemma}\label{l.cap_basic}
Let $0<s\le 1$ and $0<p<\infty$. The $M^{s,p}$-capacity
has the following properties:
\begin{itemize}
    \item[(a)] If $E\subset F\subset X$, then $\mathrm{Cap}_{M^{s,p}}(E)\leq \mathrm{Cap}_{M^{s,p}}(F)$.
    \item[(b)] There exists a constant $c\geq1$ such that 
    \[
        \mathrm{Cap}_{M^{s,p}}\left(\bigcup_{i\in\N}E_i\right)\leq c\sum_{i\in\N}\mathrm{Cap}_{M^{s,p}}(E_i)
\]
    whenever $E_i\subset X$ for all $i\in\N$.
    \item[(c)] The $M^{s,p}$-capacity is an outer capacity, that is,     \begin{equation*}
       \mathrm{Cap}_{M^{s,p}}(E)=\inf\{\mathrm{Cap}_{M^{s,p}}(U)\,:\, E\subset U\text{ and } U \text{ is open}\}. 
    \end{equation*}
    \end{itemize}
\end{lemma}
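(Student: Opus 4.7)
Parts (a) and (c) are essentially formal. For (a), any admissible test function $u$ for $F$, meaning $u\in M^{s,p}(X)$ with $u\geq 1$ on an open set $V\supset F$, automatically satisfies $u\geq 1$ on the same set $V$, which also contains $E$; so the admissible class for $F$ is contained in that for $E$, and the infimum over the larger class is no smaller. For (c), I would observe that whenever $u\in M^{s,p}(X)$ satisfies $u\geq 1$ on a neighborhood of $E$, then by definition $u\geq 1$ on some open set $U$ with $E\subset U$. This $u$ is then admissible for $U$ itself, so $\mathrm{Cap}_{M^{s,p}}(U)\leq\lVert u\rVert_{M^{s,p}(X)}^p$. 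Taking infimum over $u$ yields $\inf_{U\supset E,\,U\text{ open}}\mathrm{Cap}_{M^{s,p}}(U)\leq\mathrm{Cap}_{M^{s,p}}(E)$, and the reverse inequality follows from (a).

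The real content is in (b). Fix $\epsilon>0$ and, for each $i\in\N$, choose $u_i\in M^{s,p}(X)$ with $u_i\geq 1$ on a neighborhood of $E_i$ and an $s$-gradient $g_i\in\mathcal{D}_s(u_i)\cap L^p(X)$ such that
\[
\lVert u_i\rVert_{L^p(X)}^p+\lVert g_i\rVert_{L^p(X)}^p\leq \mathrm{Cap}_{M^{s,p}}(E_i)+\epsilon\,2^{-i}.
\]
Replacing $u_i$ with $u_i^+=\max\{u_i,0\}$ preserves admissibility and does not increase $g_i$ as an $s$-gradient (the difference $|u_i^+(x)-u_i^+(y)|\leq|u_i(x)-u_i(y)|$), so I may assume $u_i\geq 0$. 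The plan is then to set $u=\sup_i u_i$. Since the union of neighborhoods of the $E_i$ is a neighborhood of $E=\bigcup_i E_i$ on which $u\geq 1$, this candidate is admissible once I verify $u\in M^{s,p}(X)$ with quantitative control.

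The main obstacle is producing a single $s$-gradient of $u=\sup_i u_i$ from the $g_i$. The approach splits on the value of $p$. For $p\geq 1$, I would set $g(x)=\bigl(\sum_i g_i(x)^p\bigr)^{1/p}$; given $x,y\in X$ off a nullset and any $\delta>0$, picking $k$ with $u_k(x)>u(x)-\delta$ (and similarly for $y$ when $u(y)\geq u(x)$) gives
\[
|u(x)-u(y)|\leq\delta+d(x,y)^s\bigl(g_k(x)+g_k(y)\bigr)\leq\delta+d(x,y)^s\bigl(g(x)+g(y)\bigr),
\]
because $g_k(x)\leq g(x)$ pointwise; letting $\delta\to 0$ verifies the Hajłasz condition~\eqref{s grad define}, and $\lVert g\rVert_{L^p(X)}^p=\sum_i\lVert g_i\rVert_{L^p(X)}^p$. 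For $0<p<1$, the correct pointwise gradient is instead $g=\sum_i g_i$, which is again pointwise $\geq g_k$; here the elementary subadditivity $(\sum_i a_i)^p\leq\sum_i a_i^p$ for nonnegative $a_i$ yields $\lVert g\rVert_{L^p(X)}^p\leq\sum_i\lVert g_i\rVert_{L^p(X)}^p$. In both regimes one similarly has $u^p=\sup_i u_i^p\leq\sum_i u_i^p$, hence $\lVert u\rVert_{L^p(X)}^p\leq\sum_i\lVert u_i\rVert_{L^p(X)}^p$.

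Combining these bounds with the elementary estimate $(a+b)^p\leq C_p(a^p+b^p)$, which holds with $C_p=1$ for $0<p\leq 1$ and $C_p=2^{p-1}$ for $p\geq 1$, gives
\[
\mathrm{Cap}_{M^{s,p}}(E)\leq\lVert u\rVert_{M^{s,p}(X)}^p\leq C_p\sum_i\bigl(\lVert u_i\rVert_{L^p(X)}^p+\lVert g_i\rVert_{L^p(X)}^p\bigr)\leq C_p\sum_i\mathrm{Cap}_{M^{s,p}}(E_i)+C_p\epsilon.
\]
Letting $\epsilon\to 0$ and setting $c=C_p$ yields (b). The only subtlety is the $p$-dependent switch between the two constructions of $g$, which is where the constant $c\geq 1$ in the statement arises.
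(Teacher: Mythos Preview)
Your argument is essentially correct and gives a self-contained proof, whereas the paper simply defers to the literature: part (a) is noted to be immediate, part (b) is obtained by invoking \cite[Lemma~6.4]{HeKoTu} (using that $M^{s,p}(X)=M^s_{p,\infty}(X)$ in their notation), and part (c) is \cite[Remark~6.3]{HeKoTu}. So your route is genuinely different in that it actually carries out the subadditivity argument by hand via $u=\sup_i u_i$ and a $p$-dependent choice of composite $s$-gradient; this is the standard strategy and it works.

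One small bookkeeping slip in (b): the initial choice
\[
\lVert u_i\rVert_{L^p(X)}^p+\lVert g_i\rVert_{L^p(X)}^p\leq \mathrm{Cap}_{M^{s,p}}(E_i)+\epsilon\,2^{-i}
\]
is not directly attainable when $0<p<1$, because the capacity is defined via $\lVert u\rVert_{M^{s,p}(X)}^p=(\lVert u\rVert_{L^p}+\inf_g\lVert g\rVert_{L^p})^p$ and for $p<1$ one has $a^p+b^p\ge (a+b)^p$. You do have $a^p+b^p\le 2^{1-p}(a+b)^p$, so the displayed inequality holds with an extra factor $2^{1-p}$ on the right. This only changes your final constant from $c=C_p$ to $c=2^{\lvert p-1\rvert}$; the lemma's statement merely asks for some $c\ge 1$, so the conclusion is unaffected.
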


\begin{proof}
(a) This follows immediately from the definition of $M^{s,p}$-capacity.
(b) This follows from the case $q=\infty$ of \cite[Lemma~6.4]{HeKoTu} since $M^{s,p}(X)=M^{s}_{p,\infty}(X)$ using their notation, we also refer to  \cite[Proposition 2.1]{MR2764899}.
  (c) This is \cite[Remark~6.3]{HeKoTu}.  
    \end{proof}

We will mostly work with $M^{s,p}$-quasicontinuous functions. These are defined next.

\begin{definition}\label{d.qc}
    A function $u\in L^0(X)$ is  $M^{s,p}$-quasicontinuous in $X$, if for all $\varepsilon>0$ there exists a set $E\subset X$ such that $\mathrm{Cap}_{M^{s,p}}(E)<\varepsilon$ and the restriction of $u$ to $X\setminus E$ is finite and  continuous. 
\end{definition}

We observe that the set $E\subset X$ in Definition \ref{d.qc} can be assumed to be open in $X$. This is an immediate consequence of Lemma \ref{l.cap_basic}(c).

Next we  show that if $v$ is
a quasicontinuous representative of $u$, then $u=v$ not only almost everywhere, but quasieverywhere in $X$. 
This type of results
are well known for Sobolev spaces, in general. 
We refer to 
\cite{N} for a corresponding result in Haj{\l}asz--Besov spaces.
  We
provide a proof in our setting 
for convenience of the reader. 
To this end, we need the following two lemmata
that are adapted from \cite{Ki}. See also \cite{KiLeVa}.

\begin{lemma}\label{prop0}
Let $0<s\leq1$ and $0<p<\infty$. For every open set $G\subset X$ and every $E\subset X$ with $\mu(E)=0$, we have $\mathrm{Cap}_{M^{s,p}}(G)=\mathrm{Cap}_{M^{s,p}}(G\setminus E)$.
\end{lemma}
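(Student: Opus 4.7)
The inequality $\mathrm{Cap}_{M^{s,p}}(G\setminus E)\le \mathrm{Cap}_{M^{s,p}}(G)$ is immediate from monotonicity (Lemma~\ref{l.cap_basic}(a)), so the work is to establish the reverse direction $\mathrm{Cap}_{M^{s,p}}(G)\le \mathrm{Cap}_{M^{s,p}}(G\setminus E)$. The plan is to take an essentially optimal test function $u$ for $G\setminus E$ and to cheaply modify it on the measure-zero set $E\cap G$ so that it becomes a test function for $G$, exploiting that $G$ itself is open and therefore serves as its own neighborhood.

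More concretely, I would fix $\varepsilon>0$ and choose $u\in M^{s,p}(X)$ with $u\ge 1$ on some open set $U\supset G\setminus E$ and $\|u\|_{M^{s,p}(X)}^p\le \mathrm{Cap}_{M^{s,p}}(G\setminus E)+\varepsilon$ (the case $\mathrm{Cap}_{M^{s,p}}(G\setminus E)=\infty$ being trivial). Then I would define
\begin{equation*}
v(x)=\begin{cases} \max\{u(x),1\}, & x\in G,\\ u(x), & x\in X\setminus G.\end{cases}
\end{equation*}
Since $G$ is open and $u,1$ are measurable, $v\in L^{0}(X)$. On $G\setminus E$ we have $u\ge 1$ and hence $v=u$; outside $G$ we have $v=u$ by construction; so $\{v\ne u\}\subset G\cap E\subset E$, a set of $\mu$-measure zero. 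Thus $v$ is a representative of $u$.

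The key observation is then that any $s$-gradient $g\in \mathcal{D}_s(u)\cap L^p(X)$ is also an $s$-gradient of $v$: the exceptional set in Definition~\ref{d.haj} can be enlarged by $\{v\ne u\}$ without changing its $\mu$-measure, and on the complement $v$ and $u$ agree pointwise, so inequality~\eqref{s grad define} survives. This gives $v\in M^{s,p}(X)$ with $\|v\|_{M^{s,p}(X)}=\|u\|_{M^{s,p}(X)}$, as is recorded just after Definition of $\dot M^{s,p}(X)$ in Section~\ref{s.function_spaces}. Since $v\ge 1$ on $G$ and $G$ is open, $v$ is admissible in the definition of $\mathrm{Cap}_{M^{s,p}}(G)$, so
\begin{equation*}
\mathrm{Cap}_{M^{s,p}}(G)\le \|v\|_{M^{s,p}(X)}^p = \|u\|_{M^{s,p}(X)}^p\le \mathrm{Cap}_{M^{s,p}}(G\setminus E)+\varepsilon,
\end{equation*}
and letting $\varepsilon\to 0^+$ finishes the proof.

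The only substantive point — and really the whole content of the lemma — is recognizing that because $G$ is open, one does not need to enlarge $G$ further to obtain a neighborhood; this permits the crude redefinition of $u$ on $G\cap E$ to $1$ without paying any price in the $M^{s,p}$-seminorm, since altering a function on a $\mu$-null set does not change its membership in $M^{s,p}(X)$ nor its norm. No capacitary estimate for $E$ itself is needed, which is good because a set of $\mu$-measure zero can have positive $M^{s,p}$-capacity.
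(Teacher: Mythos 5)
Your proof is correct and follows essentially the same route as the paper's: both arguments take a near-optimal test function $u$ for $G\setminus E$ (which is $\ge 1$ on an open $U\supset G\setminus E$), modify it on a $\mu$-null subset so that it becomes $\ge 1$ on the open set $G$, and use that such a modification changes neither membership in $M^{s,p}(X)$ nor the norm. The only cosmetic difference is that the paper sets $\tilde u=1$ on all of $E$ and tests $\mathrm{Cap}_{M^{s,p}}(G\cup U)$, whereas you take $\max\{u,1\}$ on $G$ and test $\mathrm{Cap}_{M^{s,p}}(G)$ directly; both are valid.
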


\begin{proof}
By monotonicity, or Lemma \ref{l.cap_basic}(a), we have $\mathrm{Cap}_{M^{s,p}}(G\setminus E)\leq\mathrm{Cap}_{M^{s,p}}(G) $. To prove the converse inequality, we may clearly assume  $\mathrm{Cap}_{M^{s,p}}(G\setminus E)<\infty$. Let $\varepsilon>0$. Then there exists a test function $u$ for $\mathrm{Cap}_{M^{s,p}}(G\setminus E)$ such that
\[
\|u\|_{M^{s,p}(X)}^p\leq \mathrm{Cap}_{M^{s,p}}(G\setminus E)+\varepsilon
\]
and $u\ge 1$ on some open set $U\subset X$ such that $G\setminus E\subset U$. We define $\tilde{u}$ so that $\tilde{u}=u$ on $X\setminus E$ and $\tilde{u}=1$ on $E$. Then $\tilde{u}\in M^{s,p}(X)$, since $\mu(E)=0$ and therefore
$\tilde u$ is a representative of $u$. Observe also that $\tilde{u}\geq 1$ on $G\cup U$. Hence, we obtain
\[\mathrm{Cap}_{M^{s,p}}(G)\leq\mathrm{Cap}_{M^{s,p}}(G\cup U)\leq  \|\tilde{u}\|_{M^{s,p}(X)}^p\leq  \mathrm{Cap}_{M^{s,p}}(G\setminus E)+\varepsilon\,.\]
Thus the claim follows when $\varepsilon\to 0$.
\end{proof}

\begin{lemma}\label{prop0-1}
Let $0<s\leq1$ and $0<p<\infty$. Let  $u,v\in L^0(X)$ be  $M^{s,p}$-quasicontinuous functions such that  $u=v$ $\mu$-almost everywhere. 
Then $u=v$ holds $M^{s,p}$-quasieverywhere.
\end{lemma}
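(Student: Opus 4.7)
The plan is to show that the set $N=\{x\in X\,:\, u(x)\neq v(x)\}$ has $M^{s,p}$-capacity zero. By assumption $\mu(N)=0$, and the key idea is to reduce to Lemma~\ref{prop0}, which lets us cover $N$ by an open set whose capacity is controlled only by the ``exceptional'' open sets supplied by quasicontinuity.

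Fix $\varepsilon>0$. By Definition~\ref{d.qc} (together with the remark right after it, so we can assume the exceptional sets are open in $X$), choose open sets $G_u,G_v\subset X$ with
\begin{equation*}
\mathrm{Cap}_{M^{s,p}}(G_u)<\varepsilon,\qquad \mathrm{Cap}_{M^{s,p}}(G_v)<\varepsilon,
\end{equation*}
such that the restrictions $u\rvert_{X\setminus G_u}$ and $v\rvert_{X\setminus G_v}$ are finite and continuous. Then the difference $u-v$ is finite and continuous on the (relatively closed) set $F:=X\setminus(G_u\cup G_v)$ with respect to the subspace topology. Consequently the set
\begin{equation*}
H:=\{x\in F\,:\, u(x)\neq v(x)\}
\end{equation*}
is relatively open in $F$, so there exists an open set $W\subset X$ with $H=W\cap F$. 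Define the open set $U:=W\cup G_u\cup G_v$.

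The key containment is $N\subset U$ and $U\setminus N\subset G_u\cup G_v$. For the first, if $x\in N$ then either $x\in G_u\cup G_v\subset U$, or $x\in F$, in which case $u(x)\neq v(x)$ forces $x\in H\subset W\subset U$. For the second, suppose $x\in U\setminus N$, so $u(x)=v(x)$; if $x\notin G_u\cup G_v$, then $x\in F\cap(W\cup G_u\cup G_v)=F\cap W=H\subset N$, a contradiction. Since $U$ is open and $\mu(N)=0$, Lemma~\ref{prop0} together with Lemma~\ref{l.cap_basic}(a),(b) yields
\begin{equation*}
\mathrm{Cap}_{M^{s,p}}(U)=\mathrm{Cap}_{M^{s,p}}(U\setminus N)\leq \mathrm{Cap}_{M^{s,p}}(G_u\cup G_v)\leq c\bigl(\mathrm{Cap}_{M^{s,p}}(G_u)+\mathrm{Cap}_{M^{s,p}}(G_v)\bigr)<2c\varepsilon.
\end{equation*}
By monotonicity, $\mathrm{Cap}_{M^{s,p}}(N)\leq\mathrm{Cap}_{M^{s,p}}(U)<2c\varepsilon$, and letting $\varepsilon\to 0$ gives $\mathrm{Cap}_{M^{s,p}}(N)=0$, which is exactly the conclusion.

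The main obstacle is setting up $W$ and verifying the containment $U\setminus N\subset G_u\cup G_v$: the point is that quasicontinuity only gives continuity on the complement of the small open sets $G_u,G_v$, and this complement need not itself be open in $X$, so one must pass through the subspace topology to extract the ambient open set $W$; once this is done, Lemma~\ref{prop0} does the heavy lifting by absorbing the $\mu$-null set $N$ for free. No further analytic input is needed.
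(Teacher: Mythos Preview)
Your proof is correct and follows essentially the same approach as the paper's: pick open exceptional sets from quasicontinuity, note that $\{u\neq v\}$ is relatively open on their complement, extract an ambient open $W$, and then apply Lemma~\ref{prop0} to absorb the $\mu$-null set $N$ before bounding by the capacity of $G_u\cup G_v$. The paper applies Lemma~\ref{prop0} to $(W\cup G_u\cup G_v)\setminus(W\cap F)$ rather than to $(W\cup G_u\cup G_v)\setminus N$, but this is only a cosmetic difference.
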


\begin{proof}
Let $c\geq1$ be the subaddivitivity constant in Lemma \ref{l.cap_basic}(b) and let $\varepsilon>0$ be given.
Let $ E_1,E_2\subset X$ be open sets such that $\mathrm{Cap}_{M^{s,p}}(E_1),\mathrm{Cap}_{M^{s,p}}(E_2)<\frac{\varepsilon}{2c}$ and the restrictions of $u$ to $X\setminus E_1$ and $v$ to $X\setminus E_2$ are continuous, respectively. 
Define an open set $E=E_1\cup E_2$. By Lemma \ref{l.cap_basic}(b), we have  $\mathrm{Cap}_{M^{s,p}}(E) <\varepsilon$. Observe that the difference $u-v$ 
 is continuous in $X\setminus E$, and hence the set
\[
\{x\in X\setminus E\,:\, u(x)\neq v(x)\}= \{x\in X\setminus E\,:\, (u-v)(x)\neq 0\}
\]
is open in the relative topology of $X\setminus E$. Thus, there exists an open set $U\subset X$ such that
\[
\{x\in X\setminus E\,:\, u(x)\neq v(x)\} = U\cap (X\setminus E) =U\setminus E\,. 
\]
On the other hand, we have
\[\{x\in X\,:\, u(x)\neq v(x)\}\subset E\cup \{x\in X\setminus E\,:\, u(x)\neq v(x)\} = E\cup (U\setminus E)=U\cup E\,.
\]
and
\[
\mu(U\setminus E)=\mu(\{x\in X\setminus E\,:\, u(x)\neq v(x)\})=0\,,
\]
since $u=v$ $\mu$-almost everywhere in $X$.
By monotonicity and Lemma~\ref{prop0},
\begin{align*}
\mathrm{Cap}_{M^{s,p}}(\{x\in X\,:\, u(x)\neq v(x)\})&\le \mathrm{Cap}_{M^{s,p}}(U\cup E)\\&=\mathrm{Cap}_{M^{s,p}}((U\cup E)\setminus (U\setminus E))=\mathrm{Cap}_{M^{s,p}}(E)<\varepsilon. 
\end{align*}
Hence the claim follows when $\varepsilon\to0$.
\end{proof}

The following theorem is a slight reformulation of \cite[Theorem~1.2]{HeKoTu}. It shows that a
function $u\in\dot{M}^{s,p}(X)$ has an explicit $M^{s,p}$-quasicontinuous representative $u^*$ that can be defined
by using certain limits of medians.

\begin{theorem}\label{quasicont repesent}
    Let $0<s\leq1$ and $0<p<\infty$. Let $u\in\dot{M}^{s,p}(X)$ and define \[u^*(x)=\limsup_{r\to 0}m_u(B(x,r))\] for all $x\in X$. Then there exists a set $E\subset X$ with $\mathrm{Cap}_{M^{s,p}}(E)=0$ such that the limit  
    \begin{equation}\label{represent}
         \lim_{r\to 0}m_u(B(x,r))=u^*(x)
    \end{equation}
    exists for every $x\in X\setminus E$. Moreover, $u^*$ is an $M^{s,p}$-quasicontinuous representative of $u$.
    \end{theorem}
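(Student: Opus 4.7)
\medskip

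\noindent\emph{Proof plan.} The strategy is to reduce the claim to the inhomogeneous case established in \cite{HeKoTu}, which presumably proves exactly this statement whenever $u\in M^{s,p}(X)$. The obstruction is that a function in $\dot{M}^{s,p}(X)$ need not be in $L^p$ globally, nor even locally integrable, so we cannot apply the inhomogeneous theorem directly. Our remedy is an exhaustion–plus–shift procedure. Fix the basepoint $O$ and consider the balls $B_k=B(O,2^k)$. Set $c_k=m_u(B_k)$ and let $\eta_k$ be a $C/2^k$-Lipschitz cutoff with $\eta_k\equiv 1$ on $B_k$ and $\eta_k\equiv 0$ off $B_{k+1}$. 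We want to show that each function
\[
u_k=\eta_k\bigl(u-c_k\bigr)
\]
belongs to $M^{s,p}(X)$.

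\medskip

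To see this, I would first prove a median-Poincar\'e inequality: for any $s$-gradient $g\in L^p(X)$ of $u$, we have
\[
\int_{B_{k+1}}|u-c_{k+1}|^{p}\,d\mu\le C(2^{k+1})^{sp}\int_{B_{k+1}} g^p\,d\mu,
\]
which follows from Definition~\ref{d.haj} combined with the median bound in Lemma~\ref{median prps}(c) applied on the superlevel or sublevel set produced by Lemma~\ref{med_another lemma}. Since $c_k$ and $c_{k+1}$ differ by a bounded constant (compared with the $L^p$-norm on $B_{k+1}$), the same estimate holds with $c_k$ in place of $c_{k+1}$, so $u-c_k\in L^p(B_{k+1})$. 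The truncation $\eta_k(u-c_k)$ then lies in $L^p(X)$. For the gradient bound, a Leibniz-type splitting
\[
|u_k(x)-u_k(y)|\le \eta_k(x)|u(x)-u(y)|+|\eta_k(x)-\eta_k(y)|\,|u(y)-c_k|,
\]
together with the fact that $\eta_k$ is Lipschitz with bounded support (so $|\eta_k(x)-\eta_k(y)|\le C\,d(x,y)^{s}$ inside $\mathrm{supp}\,\eta_k$), shows that $C(g+|u-c_k|\mathbf{1}_{B_{k+1}})\in L^p(X)$ is an $s$-gradient of $u_k$. Hence $u_k\in M^{s,p}(X)$.

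\medskip

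Now apply \cite[Theorem~1.2]{HeKoTu} to each $u_k$ to obtain a set $F_k\subset X$ with $\mathrm{Cap}_{M^{s,p}}(F_k)=0$ outside which $\lim_{r\to 0}m_{u_k}(B(x,r))$ exists and equals an $M^{s,p}$-quasicontinuous representative $u_k^*$ of $u_k$. Since $u_k=u-c_k$ pointwise on $B_k$ and medians are translation invariant by Lemma~\ref{median prps}(b), for every $x\in B_{k-1}\setminus F_k$ one has, for all sufficiently small $r$,
\[
m_{u}(B(x,r))=m_{u_k}(B(x,r))+c_k\xrightarrow[r\to 0]{} u_k^*(x)+c_k .
\]
In particular the $\limsup$ in the statement is an honest limit on $B_{k-1}\setminus F_k$, so $u^*(x)=u_k^*(x)+c_k$ there. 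Countable subadditivity of capacity (Lemma~\ref{l.cap_basic}(b)) gives $\mathrm{Cap}_{M^{s,p}}(\bigcup_k F_k)=0$, establishing \eqref{represent} quasieverywhere. To check that $u^*$ is a representative of $u$, observe that on $B_{k-1}$ it agrees $\mu$-a.e.\ with the representative $u_k^*+c_k$ of $u_k+c_k=u$; consistency across different $k$ on the overlaps is automatic, and Lemma~\ref{prop0-1} guarantees the choice is unique up to capacity. Finally, quasicontinuity of $u^*$ on $X$ follows from a diagonal argument: given $\varepsilon>0$, choose for each $k$ an open $E_k$ with $\mathrm{Cap}_{M^{s,p}}(E_k)<\varepsilon 2^{-k-1}$ such that $u_k^*$ is continuous on $X\setminus E_k$; then $u^*$ is continuous on $X\setminus\bigl(\bigcup_k E_k\cup\bigcup_k F_k\bigr)$, and this exceptional set has capacity at most $c\varepsilon$ by Lemma~\ref{l.cap_basic}(b). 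The main obstacle in this argument is the verification of the median-Poincar\'e inequality and the $s$-gradient estimate for the cutoff product $u_k$; everything else is book-keeping with capacity and medians.
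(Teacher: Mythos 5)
Your argument is correct, but it is worth noting that the paper gives no proof of this theorem at all: it is stated as ``a slight reformulation of [Theorem~1.2]{HeKoTu}'' and simply cited. What you have written is essentially the reduction that such a reformulation requires, namely passing from the inhomogeneous result (for $u\in M^{s,p}(X)$) to the homogeneous class $\dot{M}^{s,p}(X)$ by the exhaustion $B_k=B(O,2^k)$, the shift by the medians $c_k=m_u(B_k)$, and the Lipschitz truncation $u_k=\eta_k(u-c_k)$. Since the statement is local (both the existence of $\lim_{r\to 0}m_u(B(x,r))$ off a null-capacity set and quasicontinuity are checked on the open balls $B_{k-1}$, where $m_{u_k}(B(x,r))=m_u(B(x,r))-c_k$ by Lemma~\ref{median prps}(b)), your patching via countable subadditivity of the capacity is exactly right, and the median--Poincar\'e inequality you need follows, as you indicate, from Lemma~\ref{med_another lemma} with $c=u(x)$ together with the $s$-gradient inequality and integration over $B_{k+1}$. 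Two small points should be tightened. First, the bound $|\eta_k(x)-\eta_k(y)|\le C\,d(x,y)^s$ requires combining the Lipschitz estimate with the trivial bound $|\eta_k(x)-\eta_k(y)|\le 1$: for $d(x,y)\le 2^k$ use $C2^{-k}d(x,y)\le C2^{-ks}d(x,y)^s$, and for $d(x,y)>2^k$ use $1\le 2^{-ks}d(x,y)^s$; the resulting constant $C2^{-ks}$ depends on $k$, which is harmless since you only need each individual $u_k$ to lie in $M^{s,p}(X)$. Second, your Leibniz splitting puts the factor $|u(y)-c_k|$ on the point $y$, which may lie outside $B_{k+1}$ where $u-c_k$ is not controlled; when exactly one of $x,y$ lies in $\operatorname{supp}\eta_k$ you must expand around that point (i.e.\ use the symmetric identity $u_k(x)-u_k(y)=\eta_k(y)(u(x)-u(y))+(\eta_k(x)-\eta_k(y))(u(x)-c_k)$ in the other case), after which $C\bigl(g+2^{-ks}|u-c_k|\mathbf{1}_{B_{k+1}}\bigr)$ is indeed an $s$-gradient of $u_k$. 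With these routine repairs the proposal is a complete and self-contained derivation of the theorem from the inhomogeneous case, which is more than the paper itself supplies.
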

    
\section{Limits of medians at infinity}\label{medians at infinity}

  We are interested in  limits of quasicontinuous Haj{\l}asz--Sobolev functions
at infinity. The first step is to study
the limits of medians   $m_u(A_{\kappa^j}(O))$,   where $O\in X$
is a fixed basepoint and
\[
  A_{\kappa^j}(O)=  B(O,\kappa^{j+1})\setminus B(O,\kappa^{j}) \,,\qquad j\in\N\,.
\]
  Inspired by \cite[Section 4]{AKM}, we prove the existence and basepoint-independence of the limit of these medians.

First we consider the easier case of $u\in M^{s,p}(X)\subset L^p(X)$.

\begin{lemma}\label{median lp cong}
Let $0<p<\infty$ and $u\in L^p(X)$. Then  
\begin{equation}\label{median convg lP}
    \lim_{j\to\infty}m_u(  A_{\kappa^j}(O))=0
\end{equation}
 for every $O\in X$.
\end{lemma}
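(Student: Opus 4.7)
The plan is to bound the median directly by an $L^p$ integral average via Lemma~\ref{median prps}, and then exploit the $L^p$-integrability of $u$ together with disjointness of the annuli and a uniform lower bound on $\mu(A_{\kappa^j}(O))$. Fix $O\in X$. The starting estimate combines parts~(a) and~(c) of Lemma~\ref{median prps}:
\begin{equation*}
  |m_u(A_{\kappa^j}(O))|\le m_{|u|}(A_{\kappa^j}(O))\le \left(\frac{2}{\mu(A_{\kappa^j}(O))}\int_{A_{\kappa^j}(O)}|u(x)|^p\, d\mu(x)\right)^{1/p}.
\end{equation*}

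Next I would observe that the annuli $A_{\kappa^j}(O)=B(O,\kappa^{j+1})\setminus B(O,\kappa^j)$ are pairwise disjoint for distinct $j\in\N$, because $A_{\kappa^j}(O)=\{x\in X:\kappa^j\le d(x,O)<\kappa^{j+1}\}$. Since $u\in L^p(X)$, this yields
\begin{equation*}
\sum_{j\in\N}\int_{A_{\kappa^j}(O)}|u(x)|^p\, d\mu(x)\le \int_X |u(x)|^p\, d\mu(x)<\infty,
\end{equation*}
so the individual terms tend to $0$ as $j\to \infty$.

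For the denominator, I would use the annulus–ball comparison~\eqref{annuli ball comparison} (a direct consequence of the reverse doubling condition~\eqref{reverse doubling}) to write
\begin{equation*}
\mu(A_{\kappa^j}(O))\ge (1-c_R)\mu(B(O,\kappa^{j+1}))\ge (1-c_R)\mu(B(O,\kappa))>0
\end{equation*}
for every $j\in\N$, where the second inequality follows from monotonicity of measure since $\kappa^{j+1}\ge \kappa$. Thus the denominator is bounded below by a positive constant independent of $j$. Combining these ingredients, the right-hand side of the initial estimate tends to $0$ as $j\to\infty$, proving~\eqref{median convg lP}.

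There is no real obstacle here; the mild subtlety is simply making sure that $\mu(A_{\kappa^j}(O))$ does not degenerate, which is exactly the role of the reverse doubling assumption \eqref{reverse doubling} through the estimate \eqref{annuli ball comparison}. Without uniform perfectness the annuli could have vanishing measure, in which case the medians would not even be defined, so this ingredient is essential despite the otherwise elementary nature of the argument.
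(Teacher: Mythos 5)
Your proof is correct. It opens exactly as the paper's does --- bounding $|m_u(A_{\kappa^j}(O))|$ by the $L^p$ average via Lemma~\ref{median prps}(a),(c) and invoking the annulus--ball comparison \eqref{annuli ball comparison} --- but it closes the limiting argument by a genuinely different (if equally elementary) route. The paper keeps the numerator bounded by the full norm $\int_X|u|^p\,d\mu$ and lets the denominator $\mu(B(O,\kappa^j))$ tend to infinity, using the fact that $\mu(X)=\infty$ (itself a consequence of reverse doubling). You instead exploit the pairwise disjointness of the annuli to conclude that the numerators $\int_{A_{\kappa^j}(O)}|u|^p\,d\mu$ are the terms of a convergent series and hence tend to zero, while the denominators stay bounded below by the fixed positive constant $(1-c_R)\,\mu(B(O,\kappa))$. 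Both arguments rest on the same two ingredients; yours has the mild advantage of not needing $\mu(B(O,\kappa^j))\to\infty$ explicitly, whereas the paper's avoids the tail-of-a-series step. Your closing observation about the role of uniform perfectness --- guaranteeing $\mu(A_{\kappa^j}(O))>0$ so that the medians are even defined --- is accurate and matches how \eqref{annuli ball comparison} is used in the paper.
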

\begin{proof}

 Fix $O\in X$ and let $j\in \N$. By using Lemma~\ref{median prps} and \eqref{annuli ball comparison}, we obtain
 \begin{equation*}\begin{split}
    |m_u( A_{\kappa^j}(O))|&\leq m_{\lvert u\rvert}( A_{\kappa^j}(O))\le \left(\frac{2}{\mu( A_{\kappa^j}(O))}\int_{ A_{\kappa^j}(O)} |u(x)|^p \,d\mu(x)\right)^\frac{1}{p} \\
           &\leq \frac{C(c_R,p)}{\mu(B(O,\kappa^j))^{1/p}}\left(\int_{X} |u(x)|^p \,d\mu(x)\right)^\frac{1}{p}\,.
     \end{split}\end{equation*}
     Since $\mu(X)=\infty$ and $u\in L^p(X)$, the right-hand side tends to zero, as $j\to\infty$.
\end{proof}

Next we turn to the case $u\in \dot{M}^{s,p}(X)$. This requires more work, and we begin with the following lemma that is
used several times throughout the paper.

\begin{lemma}\label{med_int}
Let $0<s\leq1$ and $0<p<\infty$, and let
$u\in L^0(X)$ and $g\in\mathcal{D}_s(u)$. If $E,F\subset X$ are sets of finite positive measure, then
		\begin{align}\label{2.1.1}
			|m_u(E)-m_u(F)|^p\leq C(p)D(E,F)^{sp} \left(\intav_{E}g^p(x)\, d\mu(x)+\intav_{
    F }g^p(y)\, d\mu(y)\right)\,,
		\end{align}
		where \[   D(E,F)=\sup\{d(x,y)\,:\, x\in E\text{ and }y\in F\}\,. \]    
	\end{lemma}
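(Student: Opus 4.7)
The plan is to combine Lemma \ref{l1} with the defining inequality \eqref{s grad define} of the $s$-gradient, then convert the resulting pointwise bound into an integral bound by averaging.

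First I would apply Lemma \ref{l1} to the sets $E$ and $F$: this produces measurable subsets $\tilde E\subset E$ and $\tilde F\subset F$ with $2\mu(\tilde E)\geq \mu(E)$, $2\mu(\tilde F)\geq \mu(F)$, and
\[
|m_u(E)-m_u(F)|\leq |u(x)-u(y)|
\]
for all $x\in \tilde E$ and $y\in \tilde F$. Next, let $N\subset X$ be the $\mu$-null set from Definition \ref{d.haj} on which \eqref{s grad define} may fail. Since $\mu(N)=0$, replacing $\tilde E$ and $\tilde F$ with $\tilde E\setminus N$ and $\tilde F\setminus N$ does not change their measures, while \eqref{s grad define} together with $d(x,y)\le D(E,F)$ gives
\[
|m_u(E)-m_u(F)| \leq d(x,y)^s\bigl(g(x)+g(y)\bigr)\le D(E,F)^s\bigl(g(x)+g(y)\bigr)
\]
for all $x\in \tilde E\setminus N$ and $y\in \tilde F\setminus N$.

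Now I would raise to the power $p$ and use the elementary inequality $(a+b)^p\leq C(p)(a^p+b^p)$, then average in $(x,y)$ over $(\tilde E\setminus N)\times (\tilde F\setminus N)$, a set of full measure in $\tilde E\times \tilde F$. This yields
\[
|m_u(E)-m_u(F)|^p\leq C(p) D(E,F)^{sp}\left(\intav_{\tilde E}g(x)^p\,d\mu(x)+\intav_{\tilde F}g(y)^p\,d\mu(y)\right).
\]
Finally I would pass from the averages over $\tilde E,\tilde F$ to averages over $E,F$: since $\tilde E\subset E$ and $2\mu(\tilde E)\ge \mu(E)$,
\[
\intav_{\tilde E} g(x)^p\,d\mu(x)=\frac{1}{\mu(\tilde E)}\int_{\tilde E}g(x)^p\,d\mu(x)\leq \frac{2}{\mu(E)}\int_{E}g(x)^p\,d\mu(x)=2\intav_{E}g(x)^p\,d\mu(x),
\]
and analogously for $F$. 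Absorbing the factor $2$ into the constant yields the claimed inequality \eqref{2.1.1}.

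There is no real obstacle here; the only point requiring care is bookkeeping around the exceptional set $N$ where the $s$-gradient inequality may fail, which is handled by using that $\mu(N)=0$ and averaging over sets of positive measure.
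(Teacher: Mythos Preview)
Your proof is correct and follows essentially the same approach as the paper: apply Lemma~\ref{l1}, use the $s$-gradient inequality together with $d(x,y)\le D(E,F)$, raise to the $p$th power, average, and replace $\tilde E,\tilde F$ by $E,F$ via the half-measure bound. The only cosmetic difference is that the paper first integrates $|u(x)-u(y)|^p$ over $\tilde E\times\tilde F$ and then enlarges to $E\times F$ before invoking the $s$-gradient bound, whereas you apply the gradient bound pointwise first; both orderings are fine.
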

	
	\begin{proof}
By using Lemma~\ref{l1}, we obtain two  measurable subsets $\tilde{E}\subset E$ and $\tilde{F}\subset F$ such that $2\mu(\tilde{E})\geq \mu(E)$, $2\mu(\tilde{F})\geq \mu(F)$ and 
\[|m_u(E)-m_u(F)|^p\leq|u(x)-u(y)|^p\]
for all $x\in\tilde{E}$ and $y\in\tilde{F}$. By integration, we obtain
		\begin{align*}
			|m_u(E)-m_u(F)|^p
			&\leq\intav_{\tilde{E}}\intav_{\tilde{F}}|u(x)-u(y)|^p\,d\mu(y)\,d\mu(x)\\
			&=\frac{1}{\mu(\tilde{E})\mu(\tilde{F})}\int_{\tilde{E}}\int_{\tilde{F}}|u(x)-u(y)|^p\,d\mu(y)\,d\mu(x)\\
			&\leq\frac{4}{\mu(E)\mu(F)}\int_{E}\int_{F}|u(x)-u(y)|^p\,d\mu(y)\,d\mu(x)\\
			&\leq4\intav_{E}\intav_{F}|u(x)-u(y)|^p\,d\mu(y)\,d\mu(x).
		\end{align*}
       Since $g\in\mathcal{D}_s(u)$, we get
        \begin{align*}
            |m_u(E)-m_u(F)|^p&\leq 4D(E,F)^{sp}\intav_{E}\intav_{
   F}(g(x)+g(y))^p\,\, d\mu(y)\, d\mu(x)\\
   &\leq C(p)D(E,F)^{sp} \intav_{E}\intav_{
    F}(g^p(x)+g^p(y))\, d\mu(y)\, d\mu(x)\\
   &\leq C(p)D(E,F)^{sp}\intav_{E}\left(g^p(x)+\intav_{
    F }g^p(y)\, d\mu(y)\right)\, d\mu(x)\\
    &\leq C(p)D(E,F)^{sp} \left(\intav_{E}g^p(x)\, d\mu(x)+\intav_{
    F }g^p(y)\, d\mu(y)\right).
        \end{align*}
        Hence, the claim follows.
	\end{proof}

Next we establish a counterpart
of Lemma \ref{median lp cong} for functions $u\in \dot{M}^{s,p}(X)$. Here we need the additional assumption
$sp<\sigma$, where
$\sigma>0$ is an exponent in the quantitative
reverse doubling condition \eqref{iteration reverse}.
Recall that such a positive exponent always exists due to our standing assumptions described in Section \ref{e.preliminaries}.

\begin{lemma}\label{l4}
Let $0<s\leq1$ and $0<p<\infty$ be such that $sp<\sigma$, where $\sigma>0$ is as in \eqref{iteration reverse}.
Let $u\in \dot{M}^{s,p}(X)$. Then there exists a constant $c\in\R$ such that 
\begin{equation}\label{median convergence_2}
    \lim_{j\to\infty}m_u(  A_{\kappa^j}(O))=c
\end{equation}
for all $O\in X$. That is, the limit always exists and it is independent
of $O\in X$.
\end{lemma}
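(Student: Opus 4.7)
The strategy is to prove convergence for a fixed basepoint by showing the sequence is Cauchy, and then to deduce independence of the basepoint by directly comparing medians over annuli centered at two different points. In both arguments the main workhorse is Lemma~\ref{med_int}, and the condition $sp<\sigma$ enters through a geometric series estimate.

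\textbf{Step 1 (Cauchy property for a fixed basepoint).} Fix $O\in X$ and choose $g\in\mathcal{D}_s(u)\cap L^p(X)$. Set $E_j=A_{\kappa^j}(O)$ and note $D(E_j,E_{j+1})\leq 2\kappa^{j+2}$. By the annulus-ball comparison \eqref{annuli ball comparison} and the iterated reverse doubling \eqref{iteration reverse}, for every $j\geq 0$
\[
\intav_{E_j} g^p\,d\mu \leq \frac{\|g\|_{L^p(X)}^p}{(1-c_R)\mu(B(O,\kappa^{j+1}))}\leq \frac{C\,\kappa^{-(j+1)\sigma}\|g\|_{L^p(X)}^p}{\mu(B(O,1))}\,,
\]
where $C=C(c_\sigma,c_R)$. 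Plugging this (and the analogous estimate on $E_{j+1}$) into Lemma~\ref{med_int} gives
\[
|m_u(E_j)-m_u(E_{j+1})|\leq C_O\,\kappa^{j(s-\sigma/p)}\|g\|_{L^p(X)}\,,
\]
with $C_O$ independent of $j$. Since $sp<\sigma$, the exponent $s-\sigma/p$ is negative, so the right-hand side is summable in $j$. Consequently $(m_u(E_j))_{j\in\N}$ is Cauchy and converges to some $c_O\in\R$.

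\textbf{Step 2 (Independence of the basepoint).} Let $O_1,O_2\in X$ and pick $j_0\in\N$ with $\kappa^{j_0}>d(O_1,O_2)$. For $j\geq j_0$ apply Lemma~\ref{med_int} to $E=A_{\kappa^j}(O_1)$ and $F=A_{\kappa^j}(O_2)$. Then
\[
D(E,F)\leq 2\kappa^{j+1}+d(O_1,O_2)\leq 3\kappa^{j+1}\,,
\]
and, arguing exactly as in Step~1,
\[
\intav_{E}g^p\,d\mu+\intav_{F}g^p\,d\mu\leq C\bigl(\mu(B(O_1,1))^{-1}+\mu(B(O_2,1))^{-1}\bigr)\kappa^{-j\sigma}\|g\|_{L^p(X)}^p\,.
\]
Combining these in Lemma~\ref{med_int} yields
\[
|m_u(E)-m_u(F)|^p\leq C_{O_1,O_2}\,\kappa^{j(sp-\sigma)}\|g\|_{L^p(X)}^p\xrightarrow[j\to\infty]{} 0\,.
\]
Together with Step~1, passing to the limit in $j$ gives $c_{O_1}=c_{O_2}$, establishing basepoint-independence and completing the proof.

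\textbf{Main obstacle.} The only delicate point is balancing the geometric blow-up $D(E,F)^{sp}\sim \kappa^{jsp}$ coming from the Haj{\l}asz gradient estimate against the measure decay $1/\mu(A_{\kappa^j}(O))\sim \kappa^{-j\sigma}$ coming from the reverse doubling condition. The hypothesis $sp<\sigma$ is precisely what is needed for the resulting series to converge and for the basepoint-comparison term to vanish. One must be careful to bound $\int_{E_j}g^p\,d\mu$ by the \emph{global} $\|g\|_{L^p(X)}^p$ and let the decay come from the denominator $\mu(E_j)$ via~\eqref{iteration reverse}, since $g$ itself need not have any pointwise decay at infinity.
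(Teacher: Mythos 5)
Your proposal is correct and follows essentially the same route as the paper's proof: both establish the Cauchy property for a fixed basepoint by combining Lemma~\ref{med_int} with \eqref{annuli ball comparison} and \eqref{iteration reverse} to obtain a summable geometric bound $\kappa^{j(s-\sigma/p)}$, and both prove basepoint-independence by applying Lemma~\ref{med_int} directly to the two annuli with the same $D(E,F)\leq 3\kappa^{j+1}$ estimate. No gaps.
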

\begin{proof}
Since $u\in \dot{M}^{s,p}(X)$, there exists a function $g\in \mathcal{D}_s(u)\cap L^p(X)$. 
  First we  show that the limit in \eqref{median convergence_2} exists for every $O\in X$. Therefore, we fix $O\in X$ and $k\in\N$. 
 Lemma~\ref{med_int} followed by inequalities \eqref{annuli ball comparison} and \eqref{iteration reverse} gives
    \begin{align*}
        & |m_u( A_{\kappa^{k+1}}(O))-m_u( A_{\kappa^k}(O))|^p   \\
   & \leq C(s,p,\kappa)\kappa^{ksp} \left(\frac{1}{\mu({A}_{\kappa^{k+1}}(O))}\int_{{A}_{\kappa^{k+1}}(O)}g^p(x)\, d\mu(x)+\frac{1}{\mu({A}_{\kappa^{k}}(O))}\int_{
   {A}_{\kappa^{k}}(O) }g^p(y)\, d\mu(y)\right)\nonumber\\
    &\leq\frac{C(s,p,\kappa,c_R)}{\mu(B(O,\kappa^k))}\kappa^{ksp}\left(\int_{X}g^p(x)\, d\mu(x)+\int_{X }g^p(y)\, d\mu(y)\right) \nonumber\\
    &\leq \frac{C(s,p,\kappa,c_R,c_{\sigma})}{\mu(B(O,1))}\kappa^{k(sp-\sigma)} \int_{X}g^p(x)\, d\mu(x)=C^p\,\kappa^{k(sp-\sigma)} \int_{X}g^p(x)\, d\mu(x)\,,
 \end{align*}  
 where the constant $C>0$ is defined by the last equality.
We can conclude from above that, for all $k\in\N$,
\begin{equation}\label{med diff}
     |m_u( A_{\kappa^{k+1}}(O))-m_u( A_{\kappa^k}(O))|\leq  C\kappa^{k(s-\frac{\sigma}{p})} \|g\|_{L^p(X)}<\infty\,.
\end{equation} 
Hence, for all $i,j\in\N$ with $i>j$, we have  by \eqref{med diff},
\begin{align*}
       & |m_u(A_{\kappa^i}(O))-m_u( A_{\kappa^j}(O))|\\&\qquad
    \leq\sum_{k=j}^{i-1}|m_u(A_{\kappa^{k+1}}(O))-m_u(A_{\kappa^k}(O))|
     \leq C \|g\|_{L^p(X)}\sum_{k=j}^{\infty}\kappa^{k(s-\frac{\sigma}{p})}\,.
    \end{align*}
Since $sp<\sigma$, 
the sequence $(m_u(  A_{\kappa^j}(O)))_{j\in\N}$ is 
Cauchy, and
therefore it converges.

We have shown that the limit in the left-hand side of \eqref{median convergence_2} 
exists for all $O\in X$.
It remains to show that this limit is independent of $O\in X$.
For this purpose, we fix 
		two points $O_1,O_2\in X$.
        Suppose that $j\in\N$ satisfies $j\geq \log_\kappa d(O_1,O_2)$. Then 
        \[
        D( A_{\kappa^j}(O_1), A_{\kappa^j}(O_2))=  \sup\{d(x,y)\,:\, x\in  A_{\kappa^j}(O_1)\text{ and }y\in  A_{\kappa^j}(O_2)\} \leq 3\kappa^{j+1}
        \]
         and Lemma~\ref{med_int}, followed by inequalities \eqref{annuli ball comparison} and \eqref{iteration reverse}, gives 
		\begin{align*}
		|m_u&( A_{\kappa^j}(O_1))-m_u( A_{\kappa^j}(O_2))|^p\\
        &\leq C(s,p,\kappa)\kappa^{jsp}\left(\intav_{ A_{\kappa^j}(O_1)}g^p(x)\, d\mu(x)+\intav_{
    A_{\kappa^j}(O_2) }g^p(y)\, d\mu(y)\right)\\
		&\leq \left(\frac{C(s,p,\kappa,c_R,c_{\sigma})}{\mu(B(O_1,1))}+\frac{C(s,p,\kappa,c_R,c_{\sigma})}{\mu(B(O_2,1))}\right)\lVert g\rVert_{L^p(X)}^p \kappa^{j(sp-\sigma)}<\infty\,.
		\end{align*}
		 By taking  $j\to\infty$ and using the fact
		 that $sp<\sigma$, we 
		 find that 
		 \[
		 \lim_{j\to\infty}m_u(  A_{\kappa^j}(O_1))=\lim_{j\to\infty}m_u(  A_{\kappa^j}(O_2))\,,
\]
and therefore we can choose $c=\lim_{j\to\infty}m_u(  A_{\kappa^j}(O_1))$ in  \eqref{median convergence_2}, that is, the value of the limit therein is independent of $O\in X$.
		 \end{proof}
		 
		  \section{A relative capacity}

The $M^{s,p}$-capacity suits well for quantifying many fine properties of Haj{\l}asz--Sobolev functions. On the other hand, it is lacking in   scaling   and locality properties. To address these defects,  in Definition \ref{d.relative}, we deploy variant of a relative capacity
that is more suitable for describing  certain exceptional sets, outside of which we may approach  infinity. These 
so-called $(s,p)$-thin sets at infinity are defined in Section \ref{s.thin}.
We remark in passing that there are several  seemingly different variants of relative capacities of fractional smoothness in metric measure spaces,   see for example \cite{MR4894884,MR4870861} for  recent comparisons among some of them.    

\begin{definition}\label{d.relative}
 Let $0<s\leq1$ and $0<p<\infty$. Let $F\subset X$ be a  measurable   set of  diameter $0<\diam(F)<\infty$   and let $E\subset F$. We define a   relative   capacity 
\[
\text{cap}_{s,p}(E,F)=\inf_{v\in\mathcal{A}_{s,p}(E)}\left\{ { \frac{||v||_{L^p(F)}^p}{ \diam(F)^{sp}} + \inf_{g\in \D_s(v)} ||g||_{L^p(F)}^p}\right\}\,,
\]
  where $\mathcal{A}_{s,p}(E)\subset \dot{M}^{s,p}(X) $ is the set of all $M^{s,p}$-quasicontinuous functions $v\in\dot{M}^{s,p}(X)$ such that $v\ge 1$ in $E$. 
 \end{definition}

\begin{remark}
Suppose that $s$, $p$, $E$, $F$ are as in Definition \ref{d.relative}.
  Observe that $\mathbf{1}_X\in \mathcal{A}_{s,p}(E)$
and $\D_s(v)\cap L^p(F)\not=\emptyset$ for all $v\in \mathcal{A}_{s,p}(E)$.
Hence, the relative capacity in Definition \ref{d.relative}  is well defined and finite.  
Moreover, we clearly have 
\[
\frac{\mu(E)}{\diam(F)^{sp}}\le \text{cap}_{s,p}(E,F)\le \frac{\mu(F)}{\diam(F)^{sp}}\,.
\]
More refined lower bounds are  established in Section \ref{s.h}, see Lemma \ref{homogeneous Cap and Hauss est}.
\end{remark}

For the following   capacitary weak type estimate,   we recall
that \[
  \Lambda  A_{\kappa^j}(O)=B(O,\Lambda \kappa^{j+1})\setminus B(O,\kappa^j/\Lambda) 
\]
 whenever $\Lambda \ge 1$, $O\in X$ and $j\in\N$.
 As a special case, we have $ A_{\kappa^j}(O)=1  A_{\kappa^j}(O)$.

\begin{lemma}\label{l3}
Let $0<s\leq1$ and $0<p<\infty$, and let  $u\in \dot{M}^{s,p}(X)$ be an $M^{s,p}$-quasicontinuous function. Assume that $O\in X$, $j\in\N$ and  $t>0$. Define a superlevel set
\[E_t=   \{x\in   A_{\kappa^j}(O) \,:\, |u(x)-m_u(  A_{\kappa^j}(O))|>t\}\,.
\]
    Then for every $\Lambda\ge 1$ and $g\in\mathcal{D}_s(u)$, we have
    $$\mathrm{cap}_{s,p}(E_t,\Lambda  A_{\kappa^j}(O))\leq \frac{C}{t^p}\int_{\Lambda  A_{\kappa^j}(O)}g^p(x)\,d\mu(x)$$ 
    where $C=C(p,c_\mu,c_R,\Lambda)$.
\end{lemma}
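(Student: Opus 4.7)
My plan is to construct an explicit test function for the relative capacity. Writing $m = m_u(A_{\kappa^j}(O))$ and $F = \Lambda A_{\kappa^j}(O)$, I would take
$$v(x) = \min\!\bigl(1,\, t^{-1}\lvert u(x) - m\rvert\bigr)\,,\qquad x \in X\,.$$
By construction $v \equiv 1$ on $E_t$, so $v \geq 1$ there. The map $a \mapsto \min(1, t^{-1}|a-m|)$ is bounded and $(1/t)$-Lipschitz on $\mathbb{R}$, so $g/t \in \D_s(v)$ and thus $v \in \dot{M}^{s,p}(X)$; composition with a bounded continuous function preserves $M^{s,p}$-quasicontinuity, so $v \in \mathcal{A}_{s,p}(E_t)$.

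Using this $v$ as a test function, the gradient contribution to $\mathrm{cap}_{s,p}(E_t, F)$ is immediately bounded by $t^{-p}\int_F g^p\,d\mu$. Since $v \leq t^{-1}|u-m|$ pointwise, the $L^p$ contribution reduces, after dividing by $\diam(F)^{sp}$, to the Poincar\'e-type estimate
$$\int_F \lvert u - m\rvert^p\, d\mu \leq C\,\diam(F)^{sp}\int_F g^p\, d\mu\,.$$
To get this, I would first prove a general median--Poincar\'e inequality: for every measurable $S \subset X$ with $0 < \mu(S) < \infty$,
$$\int_S \lvert u - m_u(S)\rvert^p\, d\mu \leq C(p)\,\diam(S)^{sp}\int_S g^p\, d\mu\,.$$
The idea is to use Lemma \ref{med_another lemma} with $c = u(x)$ to find, for $\mu$-a.e.\ $x \in S$, a set $\tilde{S}(x) \subset S$ with $2\mu(\tilde{S}(x)) \geq \mu(S)$ and $|m_u(S) - u(x)| \leq |u(y) - u(x)|$ for all $y \in \tilde{S}(x)$; raising to the $p$-th power, combining with $|u(x)-u(y)| \leq d(x,y)^s(g(x)+g(y))$ and $d(x,y) \leq \diam(S)$, averaging over $y$, and then integrating in $x$ yields the claim. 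Applied with $S = F$ this controls $\int_F |u - m_u(F)|^p$; the triangle inequality then reduces the remaining term to $\mu(F)|m_u(F) - m|^p$, which is bounded by Lemma \ref{med_int} with $E = A_{\kappa^j}(O)$ and $F$ (noting $D(E,F) \leq \diam(F)$) combined with the comparability $\mu(F) \approx \mu(E)$ coming from the doubling and reverse-doubling conditions.

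The main technical point is proving the median Poincar\'e inequality on an arbitrary measurable set rather than on a ball; the pointwise-before-average argument above avoids Jensen and works uniformly for all $p > 0$. The remaining effort is essentially bookkeeping to confirm the stated dependence $C = C(p, c_\mu, c_R, \Lambda)$, where $\Lambda$ enters through the measure comparison $\mu(F)/\mu(E) \leq C(\Lambda, c_\mu, c_R)$ and through $\diam(F) \leq C(\Lambda)\kappa^{j+1}$.
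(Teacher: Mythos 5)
Your proof is correct, and its skeleton is the same as the paper's: take (a multiple of) $|u-m_u(A_{\kappa^j}(O))|/t$ as the test function, observe that $g/t$ is an $s$-gradient of it so the gradient term is immediate, and reduce everything to a Poincar\'e-type estimate $\int_F |u-m_u(A_{\kappa^j}(O))|^p\,d\mu\le C\,\diam(F)^{sp}\int_F g^p\,d\mu$ on $F=\Lambda A_{\kappa^j}(O)$. (Your truncation by $\min(1,\cdot)$ is harmless but unnecessary; the paper uses the untruncated function.) Where you diverge is in how that Poincar\'e estimate is obtained. The paper splits $F$ into the sets $A^{\pm}$ where $u\ge m_u(A_{\kappa^j}(O))$ and $u\le m_u(A_{\kappa^j}(O))$, notes both contain half of $A_{\kappa^j}(O)$ in measure, and bounds $|u(x)-m_u(A_{\kappa^j}(O))|$ by averaging $|u(x)-u(y)|$ over $y$ in the opposite half; the median of the small annulus is thus handled in one stroke. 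You instead prove a clean median--Poincar\'e inequality on an arbitrary set $S$ relative to its own median $m_u(S)$ (via Lemma~\ref{med_another lemma} with $c=u(x)$, which is legitimate since $u(x)$ is finite a.e.), apply it with $S=F$, and then pay an extra cross term $\mu(F)\,|m_u(F)-m_u(A_{\kappa^j}(O))|^p$, controlled by Lemma~\ref{med_int} with $D(A_{\kappa^j}(O),F)\le\diam(F)$ together with $\mu(F)/\mu(A_{\kappa^j}(O))\le C(c_\mu,c_R,\Lambda)$ from \eqref{annuli ball comparison} and doubling. Both routes use only upper bounds on distances inside $F$, so neither needs a lower bound on $\diam(F)$, and both yield the stated constant $C(p,c_\mu,c_R,\Lambda)$. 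Your intermediate median--Poincar\'e inequality on arbitrary measurable sets is slightly more reusable than the paper's ad hoc splitting; the paper's version avoids the extra median-transfer step.
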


\begin{proof}
Fix $\Lambda\ge 1$ and $g\in\mathcal{D}_s(u)$.
We define
\begin{align}\label{test function_1}
    v(x)= \frac{|u(x)-m_u(  A_{\kappa^j}(O))|}{t}\,,\qquad x\in X.
\end{align}
Then $v$ is $M^{s,p}$-quasicontinuous, since $u$ is such.
By Definition \ref{d.haj} 
there exists a set $A\subset X$ such that $\mu(A)=0$ and, for all $x,y\in X\setminus A$, 
\begin{equation}
\begin{split}\label{cap_est00}
    |v(x)-v(y)|&=\left|\frac{|u(x)-m_u(   A_{\kappa^j}(O))|}{t}-\frac{|u(y)-m_u(   A_{\kappa^j}(O))|}{t}\right|\\
    &\leq\frac{|u(x)-u(y)|}{t} \\
    &\leq \frac{d(x,y)^s}{t}(g(x)+g(y)).
    \end{split}
    \end{equation}
Therefore $\frac{g}{t}\in \mathcal{D}_s(v)$ for all $g\in \mathcal{D}_s(u)$ and thus $v\in\dot{M}^{s,p}(X)$ since $u\in \dot{M}^{s,p}(X)$. Observe that $v\ge 1$ in $E_t$. Hence, we can use $v$ as a test function for  $\mathrm{cap}_{s,p}(E_t,\Lambda  A_{\kappa^j}(O))$. This gives
\begin{align*}
    \mathrm{cap}_{s,p}(E_t,\Lambda  A_{\kappa^j}(O))&
    \leq\frac{||v||_{L^p(\Lambda  A_{\kappa^j}(O))}^p}{ \diam(\Lambda  A_{\kappa^j}(O))^{sp}}+\frac{||g||_{L^p(\Lambda  A_{\kappa^j}(O))}^p}{t^p}\,.
\end{align*}
Hence, it suffices to show that
       \begin{align}\label{test function_3}
\frac{||v||_{L^p(\Lambda  A_{\kappa^j}(O))}^p}{ \diam(\Lambda  A_{\kappa^j}(O))^{sp}}\leq \frac{C(p,c_\mu,c_R,\Lambda)}{t^p}\int_{
    \Lambda  A_{\kappa^j}(O)}g^p(x)\, d\mu(x).
\end{align}

In order to prove inequality \eqref{test function_3}, we denote
\begin{equation*}
  A^{-}= \left\{ x\in \Lambda  A_{\kappa^j}(O) \,:\, u(x)\leq m_u(  A_{\kappa^j}(O)) \right\}\end{equation*}
and
\begin{equation*}
    A^{+}= \left\{ x\in \Lambda  A_{\kappa^j}(O) \,:\, u(x)\geq m_u(  A_{\kappa^j}(O)) \right\}\,.
\end{equation*}
Note that $ A_{\kappa^j}(O)^{u,-}\subset {A^-}$ and $ A_{\kappa^j}(O)^{u,+}\subset A^+$ where $ A_{\kappa^j}(O)^{u,-}, A_{\kappa^j}(O)^{u,+}$ are defined as in \eqref{Eplus E minus}. Thus $\min\{\mu({A^-}),\mu({A^+})\}\geq \frac{\mu( A_{\kappa^j}(O))}{2}>0$ and 
therefore we can estimate
 \begin{align*}
 \int_{ {A^+}}|u(x)-m_u(  A_{\kappa^j}(O))|^p\, d\mu(x)
    &=\intav_{ {A^-}}
    \int_{{A^+}}|u(x)-m_u(  A_{\kappa^j}(O))|^p\, d\mu(x)\, d\mu(y)\\
    &\leq \intav_{ {A^-}}\int_{
   {A^+}}|u(x)-u(y)|^p\, d\mu(x)\, d\mu(y)\\
    &= \int_{
     {A^+}}\intav_{ {A^-}}|u(x)-u(y)|^p\, d\mu(y)\, d\mu(x)\,.
   \end{align*} 
Using the fact that  $g\in\mathcal{D}_s(u)$, followed by inequalities inequalities~\eqref{annuli ball comparison} and \eqref{iteration doubling}, we obtain
    \begin{align*}
     \frac{1}{\diam(\Lambda  A_{\kappa^j}(O))^{sp}}&\int_{{A^+}}|u(x)-m_u(  A_{\kappa^j}(O))|^p\, d\mu(x)\nonumber\\
     &
    \leq \int_{
     {A^+}}\intav_{ {A^-}}(g(x)+g(y))^p\, d\mu(y)\, d\mu(x)\nonumber\\
    &\leq C(p)\int_{
    {A^+}}\intav_{ {A^-}}(g^p(x)+g^p(y))\, d\mu(y)\, d\mu(x)\nonumber\\
    &\leq C(p)\left(\int_{
     {A^+}}g^p(x)\, d\mu(x)+\int_{
   {A^+}}\intav_{ {A^-}}g^p(y)\, d\mu(y)\, d\mu(x)\right)\nonumber\\
     &\leq C(p)\left(\int_{
   {A^+}}g^p(x)\, d\mu(x)+
   \frac{\mu( {A^+})}{\mu( {A^-})}\int_{{A^-}}g^p(y)\, d\mu(y)\, \right)\nonumber\\
    &\leq C(p)\left(\int_{
   \Lambda   A_{\kappa^j}(O)}g^p(x)\, d\mu(x)+
   \frac{2\mu( \Lambda  A_{\kappa^j}(O))}{\mu(  A_{\kappa^j}(O))}\int_{\Lambda  A_{\kappa^j}(O)}g^p(y)\, d\mu(y)\right)\nonumber\\
    &\leq C(p,c_\mu,c_R,\Lambda)\int_{
   \Lambda  A_{\kappa^j}(O)}g^p(x)\, d\mu(x).
       \end{align*}
       Observe that $\Lambda  A_{\kappa^j}(O)=A^+\cup A^-$. Hence, by interchanging ${A^+}$ and ${A^-}$ and arguing as above,  we obtain
  \begin{equation}\label{test function_2}
  \begin{split}
       \frac{1}{\diam(\Lambda  A_{\kappa^j}(O))^{sp}}&\int_{\Lambda  A_{\kappa^j}(O)}|u(x)-m_u(  A_{\kappa^j}(O))|^p\, d\mu(x)\\&\qquad\leq C(p,c_\mu,c_R,\Lambda)\int_{
   \Lambda  A_{\kappa^j}(O)}g^p(x)\, d\mu(x)\,.
   \end{split}
  \end{equation}
We can conclude from \eqref{test function_1} and \eqref{test function_2} that inequality \eqref{test function_3} holds.
\end{proof}

\section{Pointwise limits  at infinity}\label{s.thin}

In this section, we study the pointwise behavior of Haj{\l}asz--Sobolev functions at infinity. In particular, we show that a limit at infinity for such a  function exists, and it is independent of the basepoint $O\in X$. 
In fact,  the limit  is taken along the complement of an exceptional set $E(O)\subset X$, and this mode of convergence is made precise in the following definition.

\begin{definition}\label{d.conv}
Assume that $u\in L^0(X)$. Fix
a basepoint $O\in X$ and a set $E(O)\subset X$,
possibly depending on $O$. Then by writing
\[
\lim_{\substack{d(x,O)\to \infty \\ x\in X\setminus E(O)}} u(x)=c
\] we mean that for every $\varepsilon>0$ there exists $N\in\N$ such that $|u(x)-c|<\varepsilon$ whenever $x\in X\setminus E(O)$ and $d(x,O)>N$.
\end{definition}

%

We often use the following $(s,p)$-thin sets at infinity to define exceptional sets, outside of which we may approach infinity.

\begin{definition}\label{sp thiness}
    Let $0<s\leq1$,  $0<p<\infty$, and $O\in X$.
    A set $E(O)\subset X$ is $(s,p)$-thin at infinity, if 
    \begin{equation*}
       \lim_{m\to\infty}\sum_{j\geq m} \mathrm{cap}_{s,p}(E(O)\cap A_{\kappa^{j}}(O),\Lambda A_{\kappa^{j}}(O))=0
    \end{equation*}
    for all $\Lambda>1$.
\end{definition}

Two exponents
$Q>0$ and $\sigma>0$ satisfying \eqref{iteration doubling} and \eqref{iteration reverse}, respectively,
play a decisive role in the basic structure of $(s,p)$-thin sets at infinity, see
Remark \ref{r.neg} and  Remark \ref{r.pos}  for details.
Observe that we always have $\sigma \le Q$.
To summarize, if $sp>Q$ then the condition of $(s,p)$-thin
set at infinity may become vacuous, and if $sp\le \sigma$ then the complement of 
an $(s,p)$-thin set at infinity is unbounded. We will mainly work
in the second regime,  which is natural also for other reasons. We will
not explicitly exclude possibly vacuous cases from our study.

\begin{remark}\label{r.neg}
We assume that $X$ is connected, in addition to the other standing assumptions. Let $0<s\leq 1$ and $0<p<\infty$ be such that
$sp> Q$, where $Q$ is as in 
\eqref{iteration doubling}.
We now show that, under these assumptions, all subsets of 
$X$ are $(s,p)$-thin at infinity. Therefore this notion
becomes vacuous in this regime of parameters.
Fix $O\in X$, $E(O)\subset X$ and $\Lambda>1$.
Since $X$ is connected, we have
  $\diam(\Lambda A_{\kappa^{j}}(O))\ge (\kappa-1)\kappa^j$   for all $j\in\N$.
Now, if $m\in\N$, we use $\mathbf{1}_X$ as a test function for
each of the capacities below,  followed by inequality \eqref{iteration doubling}, to get
\begin{align*}
&\sum_{j\geq m} \mathrm{cap}_{s,p}(E(O)\cap A_{\kappa^{j}}(O),\Lambda A_{\kappa^{j}}(O))
\\&\quad \le \sum_{j\geq m} \frac{||\mathbf{1}_{X}||_{L^p(\Lambda A_{\kappa^{j}}(O))}^p}{\diam(\Lambda A_{\kappa^{j}}(O))^{sp}}
\le \frac{1}{(\kappa-1)^{sp}}\sum_{j\geq m} \frac{\mu( \Lambda A_{\kappa^{j}}(O))}{\kappa^{jsp}}\\&\quad \le \frac{1}{(\kappa-1)^{sp}}\sum_{j\geq m} \frac{\mu(B(O,\Lambda \kappa^{j+1}))}{\kappa^{jsp}}
\le  \frac{\mu(B(O,\kappa \Lambda))}{c_Q(\kappa-1)^{sp}}\sum_{j\geq m} 
\kappa^{j(Q-sp)}\,.
\end{align*}
Since $Q-sp<0$, the right-hand side tends to zero, as $m\to\infty$.
This shows that every set $E(O)\subset X$ is $(s,p)$-thin at infinity, as claimed.
\end{remark}

\begin{remark}\label{r.pos}
Let $0<s\leq 1$ and $0<p<\infty$ be such that
$sp\le \sigma$, where $\sigma$ is as in 
\eqref{iteration reverse}.
Suppose that $O\in X$ and $E(O)\subset X$ is $(s,p)$-thin at infinity.
Then we claim that $X\setminus E(O)$ is unbounded. In other words, the infinity is a limit point
of $X\setminus E(O)$ 
in the sense that for every $N\in\N$ there exists $x\in X\setminus E(O)$ such that $d(x,O)>N$. As a consequence, the limit in Definition \ref{d.conv} is unique, if it exists.

Now, we make an antithesis and assume that $X\setminus E(O)$ is bounded. Then there exists $N\in\N$ such
that $X\setminus E(O)\subset B(O,N)$. Thus, if $\Lambda> 1$ and $m>\log_\kappa(N)$, then by using \eqref{annuli ball comparison} and \eqref{iteration reverse} we obtain
\begin{align*}
&\sum_{j\geq m} \mathrm{cap}_{s,p}(E(O)\cap A_{\kappa^{j}}(O),\Lambda A_{\kappa^{j}}(O))=\sum_{j\geq m} \mathrm{cap}_{s,p}(A_{\kappa^{j}}(O),\Lambda A_{\kappa^{j}}(O))\\
&\quad \ge \sum_{j\geq m} \frac{||\mathbf{1}_{A_{\kappa^{j}}(O)}||_{L^p(\Lambda A_{\kappa^{j}}(O))}^p}{\diam(\Lambda A_{\kappa^{j}}(O))^{sp}}
 \ge (2\kappa\Lambda)^{-sp}\sum_{j\geq m} \frac{\mu( A_{\kappa^{j}}(O))}{\kappa^{jsp}}\\
&\quad \ge \frac{1-c_R}{(2\kappa\Lambda)^{sp}}\sum_{j\geq m} \frac{\mu(B(O,\kappa^j))}{\kappa^{jsp}}
\ge \underbrace{\frac{1-c_R}{c_\sigma(2\kappa\Lambda)^{sp}}\mu(B(O,1))}_{>0}\sum_{j\geq m} 
\kappa^{j(\sigma-sp)}\,.
\end{align*}
Since $\sigma-sp\ge 0$, the last series diverges to infinity, and this is a contradiction
since $E(O)$ is $(s,p)$-thin at infinity.
\end{remark}

We apply the basic technique in Remark \ref{r.pos} again. Namely, we show that, if $sp\le \sigma$
and $E(O)$ is $(s,p)$-thin at infinity, then the existence of a pointwise
limit at infinity along $X\setminus E(O)$ implies that also medians  converge at infinity. The case
$sp>1=\sigma$ of latter Example \ref{e.exe} shows
that the restriction $sp\le \sigma$ is  necessary in this generality.

 \begin{theorem}\label{t.p_implies_m}
Let $0<s\leq 1$ and $0<p<\infty$ be such that
$sp\le \sigma$, where $\sigma$ is as in 
\eqref{iteration reverse}.
Let $u\in L^0(X)$,
$O\in X$ and $E(O)\subset X$ be an $(s,p)$-thin set at infinity such 
that
 \begin{equation}\label{limit convg}
\lim_{\substack{d(x,O)\to \infty \\ x\in X\setminus E(O)}} u(x)=c
 \end{equation}
 for some $c\in\R$. Then 
 \begin{equation*}
     \lim_{j\to\infty}m_u(A_{\kappa^j}(O))=c\,.
 \end{equation*}
\end{theorem}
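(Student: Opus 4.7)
The plan is to argue by contradiction, following the template already employed in Remark~\ref{r.pos}. Assume that $m_u(A_{\kappa^j}(O))$ does not converge to $c$. Then there exist $\varepsilon>0$ and a strictly increasing sequence of indices $j_k\to\infty$ such that $\lvert m_u(A_{\kappa^{j_k}}(O))-c\rvert \ge \varepsilon$ for every $k\in\N$. The strategy is to show that, along this subsequence, the exceptional set $E(O)$ must occupy at least half of each annulus $A_{\kappa^{j_k}}(O)$, and then to exploit the trivial mass lower bound on $\mathrm{cap}_{s,p}$ together with the reverse doubling condition to contradict the $(s,p)$-thinness of $E(O)$ at infinity.

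First, set $B_k=\{x\in A_{\kappa^{j_k}}(O):\lvert u(x)-c\rvert\ge \varepsilon/2\}$. A short case analysis based on the definition of the median shows $\mu(B_k)\ge \mu(A_{\kappa^{j_k}}(O))/2$: if $m_u(A_{\kappa^{j_k}}(O))\ge c+\varepsilon$, then $c+\varepsilon/2$ lies strictly below the median, so $\mu(\{u>c+\varepsilon/2\}\cap A_{\kappa^{j_k}}(O))\ge \mu(A_{\kappa^{j_k}}(O))/2$ and this set is contained in $B_k$; the case $m_u(A_{\kappa^{j_k}}(O))\le c-\varepsilon$ is symmetric and yields $\mu(\{u\le c-\varepsilon/2\}\cap A_{\kappa^{j_k}}(O))\ge \mu(A_{\kappa^{j_k}}(O))/2$, which is again a subset of $B_k$. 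Next, by \eqref{limit convg}, pick $N\in\N$ so that $\lvert u(x)-c\rvert<\varepsilon/2$ for every $x\in X\setminus E(O)$ with $d(x,O)>N$. Since $x\in A_{\kappa^{j_k}}(O)$ implies $d(x,O)\ge \kappa^{j_k}$, for every $k$ with $\kappa^{j_k}>N$ we have $B_k\subset E(O)$, so
\[
\mu(E(O)\cap A_{\kappa^{j_k}}(O))\ge \mu(A_{\kappa^{j_k}}(O))/2.
\]

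Finally, fix any $\Lambda>1$. The trivial lower bound $\mathrm{cap}_{s,p}(E',F)\ge \mu(E')/\diam(F)^{sp}$ (obtained by testing any admissible $v\ge 1$ on $E'$ against the $L^p(F)$ part of the capacity) together with $\diam(\Lambda A_{\kappa^{j_k}}(O))\le 2\Lambda\kappa^{j_k+1}$, \eqref{annuli ball comparison}, and \eqref{iteration reverse} applied with $r=1$ and $r_0=\kappa^{j_k+1}$ gives
\[
\mathrm{cap}_{s,p}\bigl(E(O)\cap A_{\kappa^{j_k}}(O),\Lambda A_{\kappa^{j_k}}(O)\bigr)\ge \frac{(1-c_R)\mu(B(O,1))}{2c_\sigma(2\Lambda)^{sp}}\,\kappa^{(j_k+1)(\sigma-sp)}.
\]
Because $sp\le \sigma$, the right-hand side is bounded below by a positive constant independent of $k$. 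Hence the series $\sum_j \mathrm{cap}_{s,p}(E(O)\cap A_{\kappa^j}(O),\Lambda A_{\kappa^j}(O))$ has infinitely many terms uniformly bounded away from zero, so its tail sums cannot tend to zero, contradicting the $(s,p)$-thinness of $E(O)$. This contradiction forces $m_u(A_{\kappa^j}(O))\to c$. The only genuinely delicate point is ensuring the case analysis with the median is airtight and that the chosen thresholds $c\pm \varepsilon/2$ truly lie on the correct sides of the median; the rest is bookkeeping along the lines already used in Remark~\ref{r.pos}.
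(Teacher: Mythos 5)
Your proof is correct and follows essentially the same route as the paper: the paper extracts a half-measure subset of each annulus on which $|m_u(A_{\kappa^j}(O))-c|\le|u(x)-c|$ (via Lemma~\ref{med_another lemma}, which is exactly your median case analysis), shows this subset must meet $X\setminus E(O)$ for large $j$ using the trivial mass lower bound for $\mathrm{cap}_{s,p}$ together with \eqref{annuli ball comparison} and \eqref{iteration reverse}, and concludes. Your contrapositive packaging along a subsequence is just a reorganization of the same ingredients, so there is nothing to fix.
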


\begin{proof}
Fix $c\in\R$ as in \eqref{limit convg}. Let $j\in\N$. By Lemma~\ref{med_another lemma}, there exists a set $\tilde{A}_{\kappa^j}(O)\subset {A}_{\kappa^j}(O)$ such that $2\mu(\tilde{A}_{\kappa^j}(O))\geq\mu({A}_{\kappa^j}(O))>0$ and
\begin{equation}\label{e.ee}
    |m_u({A}_{\kappa^j}(O))-c|\leq|u(x)-c|
\end{equation}
whenever $x\in \tilde{A}_{\kappa^j}(O) $.

We claim there exists $M\in\N$ such that
$\tilde{A}_{\kappa^j}(O)\setminus E(O)\not=\emptyset$ for all $j> M$.
We make an antithesis by assuming that $\tilde{A}_{\kappa^j}(O)\subset E(O)$ for infinitely many $j\in\N$.
Denote by ${J}\subset \N$ the infinite set of all such indices $j\in\N$.
Fix $\Lambda>1$ and $m\in\N$. By using monotonicity of relative capacity, \eqref{annuli ball comparison} and \eqref{iteration reverse} we obtain
\begin{align*}
&\sum_{j\ge m} \mathrm{cap}_{s,p}(E(O)\cap A_{\kappa^{j}}(O),\Lambda A_{\kappa^{j}}(O))
\ge \sum_{\substack{j\in{J}\\j\ge m}}\mathrm{cap}_{s,p}(\tilde{A}_{\kappa^j}(O),\Lambda A_{\kappa^{j}}(O))\\
&\quad \ge \sum_{\substack{j\in{J}\\j\ge m}} \frac{||\mathbf{1}_{\tilde{A}_{\kappa^j}(O)}||_{L^p(\Lambda A_{\kappa^{j}}(O))}^p}{\diam(\Lambda A_{\kappa^{j}}(O))^{sp}}
 \ge (2\kappa\Lambda)^{-sp}\sum_{\substack{j\in{J}\\j\ge m}} \frac{\mu(\tilde{A}_{\kappa^j}(O))}{\kappa^{jsp}}
 \ge \frac{1}{2(2\kappa\Lambda)^{sp}}\sum_{\substack{j\in{J}\\j\ge m}} \frac{\mu({A}_{\kappa^j}(O))}{\kappa^{jsp}}\\
&\quad \ge \frac{1-c_R}{2(2\kappa\Lambda)^{sp}}\sum_{\substack{j\in{J}\\j\ge m}} \frac{\mu(B(O,\kappa^j))}{\kappa^{jsp}}
\ge \underbrace{\frac{1-c_R}{2c_\sigma(2\kappa\Lambda)^{sp}}\mu(B(O,1))}_{>0}\sum_{\substack{j\in{J}\\j\ge m}} 
\kappa^{j(\sigma-sp)}\,.
\end{align*}
Since $\sigma-sp\ge 0$ and $\lvert {J}\rvert=\infty$, the last series diverges to infinity, and this is a contradiction
since $E(O)$ is $(s,p)$-thin at infinity.

Let $\varepsilon>0$ be given. Then by \eqref{limit convg} there exists $N\in\N$ such that 
\begin{equation}\label{convergence}
    |u(x)-c|<\varepsilon
\end{equation}
whenever $x\in X\setminus E(O)$ and $d(x,O)>N$. Let $j>\max\{M,\log_{\kappa}N\}$ and $x\in\tilde{A}_{\kappa^j}(O)\setminus E(O)$. By \eqref{e.ee} and \eqref{convergence}, we then have
\begin{equation*}
    |m_u({A}_{\kappa^j}(O))-c|\leq|u(x)-c|<\varepsilon.
\end{equation*}
Hence  $\lim_{j\to\infty}m_u(A_{\kappa^j}(O))=c$.
     \end{proof}

The following lemma is \cite[Lemma 3.1]{KlKoNg}.

\begin{lemma}\label{lm-Klein-original} 
		Let $0< p<\infty$. If $(a_j)_{j\in\N}$ is a non-negative sequence such that $\sum_{j\in\N}a_j<\infty$, then there exists a positive and decreasing sequence $(b_j)_{j\in\N}$ 
	such that  $\lim_{j\to \infty}b_j=0$ and
		\begin{equation*}
		\sum_{j\in\N}  \frac{a_j}{b_j^p} <\infty\,.
		\end{equation*}\end{lemma}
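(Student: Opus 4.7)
The plan is to construct $(b_j)$ from the tails of the series $\sum_j a_j$, exploiting the fact that a convergent series has tails decaying to zero. To sidestep the trivial possibility that those tails vanish identically from some index onward, I would first slightly enlarge the sequence.

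Specifically, I would set $\tilde a_j = a_j + 2^{-j}$, so that $\sum_j \tilde a_j < \infty$ as well, and define the tail
\[R_j = \sum_{k\ge j} \tilde a_k.\]
Then $R_j$ is strictly positive, strictly decreasing in $j$, satisfies $R_j - R_{j+1} = \tilde a_j \ge 2^{-j}>0$, and $R_j \to 0$ as $j\to\infty$. Next, I would fix an exponent $\alpha\in(0,1/p)$ and define $b_j = R_j^\alpha$. This immediately yields a positive, strictly decreasing sequence with $\lim_{j\to\infty} b_j=0$, settling two of the three required properties.

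For the remaining summability, I would bound $a_j\le \tilde a_j = R_j-R_{j+1}$ and use that the function $t\mapsto t^{-\alpha p}$ is decreasing on $(0,R_1]$ to perform a term-by-term integral comparison:
\[\frac{a_j}{b_j^p}\;\le\;\frac{R_j-R_{j+1}}{R_j^{\alpha p}}\;\le\;\int_{R_{j+1}}^{R_j} t^{-\alpha p}\, dt.\]
Summing over $j\in\N$ telescopes the right-hand side to $\int_0^{R_1} t^{-\alpha p}\, dt$, which is finite precisely because $\alpha p < 1$.

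There is no genuine difficulty in this argument; the only mild obstacle will be guaranteeing strict positivity and strict monotonicity of $(b_j)$ in the degenerate case when the original tails $\sum_{k\ge j} a_k$ vanish for large $j$, and the $2^{-j}$-perturbation cleanly resolves both issues. The flexibility in choosing any $\alpha\in(0,1/p)$ shows the construction is robust and the exponent $\alpha=1/(2p)$ would do equally well.
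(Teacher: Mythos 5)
Your proof is correct. The paper itself gives no proof of this lemma; it simply cites \cite[Lemma 3.1]{KlKoNg}, and your tail-based construction (perturbing by $2^{-j}$ to force strict positivity, setting $b_j=R_j^{\alpha}$ with $\alpha p<1$, and telescoping via the integral comparison $\frac{R_j-R_{j+1}}{R_j^{\alpha p}}\le\int_{R_{j+1}}^{R_j}t^{-\alpha p}\,dt$) is precisely the standard argument used there, with every step checking out.
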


Next we formulate and  prove one of our main results, which is a  
converse of Theorem \ref{t.p_implies_m} for Haj{\l}asz--Sobolev functions.
Namely, the following result gives a sufficient condition for the existence
of their pointwise limit at infinity. This sufficient condition is given in terms 
of converging  medians.

 \begin{theorem}\label{l5}
 Let $0<s\leq1$, $0<p<\infty$, and let $u\in\dot{M}^{s,p}(X)$ be an $M^{s,p}$-quasicontinuous function.
Assume that  $O\in X$ and  the limit
\begin{equation}\label{median limit 2}
\lim_{j\to\infty}m_u(  A_{\kappa^j}(O))=c
\end{equation}
exists.
Then there is a set $E(O)\subset X$, that is $(s,p)$-thin at infinity, such that
 \begin{equation}\label{function limit}
    \lim_{\substack{d(x,O)\to \infty \\ x\in X\setminus E(O)}} u(x)=c\,.
 \end{equation}
  \end{theorem}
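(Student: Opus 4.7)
By replacing $u$ with $u-c$, which keeps $u$ in $\dot M^{s,p}(X)$ with the same $s$-gradient class, remains $M^{s,p}$-quasicontinuous, and shifts medians by $-c$ (Lemma~\ref{median prps}(b)), the proof reduces to the case $c=0$. The strategy is to choose, for each annulus $A_{\kappa^j}(O)$, a small threshold $\epsilon_j\to 0$, define $E_j=\{x\in A_{\kappa^j}(O):|u(x)-m_u(A_{\kappa^j}(O))|>\epsilon_j\}$, and let $E(O)=\bigcup_{j\in\N} E_j$. Convergence on the complement is then automatic: for $x\in X\setminus E(O)$ lying in the unique annulus $A_{\kappa^j}(O)$, the estimate $|u(x)|\le\epsilon_j+|m_u(A_{\kappa^j}(O))|$ combined with \eqref{median limit 2} forces $u(x)\to 0$ as $d(x,O)\to\infty$.

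The heart of the argument is choosing $\epsilon_j$ so that $E(O)$ is $(s,p)$-thin at infinity for \emph{every} $\Lambda>1$ simultaneously. Fix $g\in\mathcal{D}_s(u)\cap L^p(X)$ and set $a_j=\int_{A_{\kappa^j}(O)}g^p\,d\mu$, so that $\sum_j a_j\le\|g\|_{L^p(X)}^p<\infty$. Apply Lemma~\ref{lm-Klein-original} to $(a_j)$ via its explicit plateau construction: pick $M_n\uparrow\infty$ with $\sum_{j\ge M_n}a_j\le 4^{-n}$ and set $\epsilon_j=2^{-n/p}$ whenever $j\in[M_n,M_{n+1})$. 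This produces a positive decreasing $\epsilon_j\to 0$ with $\sum_j a_j/\epsilon_j^p<\infty$ and, crucially, the uniform slow-decay estimate
\[
\epsilon_{j+K}/\epsilon_j\ge 2^{-K/p},\qquad K,j\in\N,
\]
which is what makes a single choice of $\epsilon_j$ cover all $\Lambda$ at once in the overlap computation below.

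To verify $(s,p)$-thinness, fix $\Lambda>1$ and let $K_0=K_0(\Lambda)$ be the smallest integer with $\kappa^{K_0}\ge\Lambda\kappa$, so that $x\in\Lambda A_{\kappa^j}(O)$ forces $|j-j_x|\le K_0$, where $j_x$ is the unique index with $x\in A_{\kappa^{j_x}}(O)$. Lemma~\ref{l3} gives
\[
\mathrm{cap}_{s,p}(E_j,\Lambda A_{\kappa^j}(O))\le \frac{C(\Lambda)}{\epsilon_j^p}\int_{\Lambda A_{\kappa^j}(O)}g^p\,d\mu.
\]
Summing over $j\ge m$, swapping sum and integral, and combining the bounded overlap of the family $(\Lambda A_{\kappa^j}(O))_{j}$ with the slow-decay estimate yields
\[
\sum_{j\ge m}\mathrm{cap}_{s,p}(E_j,\Lambda A_{\kappa^j}(O))\le C(\Lambda)\sum_{k\ge m-K_0}\frac{a_k}{\epsilon_k^p}\longrightarrow 0\quad\text{as }m\to\infty.
\]
Since the annuli $A_{\kappa^j}(O)$ are pairwise disjoint, $E(O)\cap A_{\kappa^j}(O)=E_j$, and thinness follows for every $\Lambda>1$.

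The main technical obstacle is producing a \emph{single} threshold sequence $\epsilon_j$, and hence a single exceptional set $E(O)$, that works simultaneously for every $\Lambda>1$. This requires quantitative control of $\epsilon_{j+K}/\epsilon_j$ uniformly in $K$, which is not supplied by the bare statement of Lemma~\ref{lm-Klein-original} but is encoded in its dyadic plateau proof. Extracting this finer information and pairing it with the bounded-overlap estimate for the dilated annuli is the key point of the argument.
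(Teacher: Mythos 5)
Your proposal is correct and follows essentially the same route as the paper: decompose the complement of the good set into the superlevel sets $E_j=\{x\in A_{\kappa^j}(O):|u(x)-m_u(A_{\kappa^j}(O))|>b_j\}$ with thresholds $b_j\to 0$ produced by Lemma~\ref{lm-Klein-original}, control $\mathrm{cap}_{s,p}(E_j,\Lambda A_{\kappa^j}(O))$ by the weak-type estimate of Lemma~\ref{l3}, and read off the pointwise convergence on $X\setminus\bigcup_j E_j$ from \eqref{median limit 2}. The one place where you genuinely go beyond the paper is the uniformity in $\Lambda$: the paper fixes a single $\Lambda>1$ at the outset, so its sequence $(b_j)$ and hence its set $E(O)$ a priori depend on that $\Lambda$, while Definition~\ref{sp thiness} demands the capacity sums vanish for every $\Lambda>1$. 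Your plateau construction, which supplies the extra slow-decay bound $\epsilon_{j+K}\ge 2^{-K/p}\epsilon_j$ not contained in the bare statement of Lemma~\ref{lm-Klein-original}, together with the bounded-overlap/reindexing computation, closes this point cleanly and yields one exceptional set valid for all $\Lambda$ simultaneously; this is a worthwhile refinement rather than a different method. Two minor points to tidy up in a final write-up: for small $j$ the dilated annulus $\Lambda A_{\kappa^j}(O)$ is not covered by the annuli $A_{\kappa^k}(O)$ with $k\ge 1$, so either extend the index range of the $a_k$ or restrict to $j\ge K_0$ (harmless for the limit $m\to\infty$); and make sure the intervals $[M_n,M_{n+1})$ are nonempty so that $n(j)$ increases by at most one per step.
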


\begin{proof}
Let  $g\in\mathcal{D}_s(u)\cap L^p(X)$. 
Fix a constant $\Lambda>1$. Then there exists  $C(\Lambda,\kappa)>0$ such that $\sum_{j\in\N}\mathbf{1}_{\Lambda  A_{\kappa^j}(O)}(x)\leq C(\Lambda,\kappa)$ for all $x\in X$. Hence,  we have 
\begin{align*}
  \sum_{j\in\N}  \int_{\Lambda  A_{\kappa^j}(O)}g^p(x)\, d\mu(x)=\sum_{j\in\N}\int_X\mathbf{1}_{\Lambda  A_{\kappa^j}(O)}(x)\cdot g^p(x)\, d\mu(x)\leq C(\Lambda,\kappa)\int_X g^p(x)\, d\mu(x)<\infty.
\end{align*}
By using Lemma~\ref{lm-Klein-original}, we obtain a positive and decreasing sequence $(b_j)_{j\in\N}$ such that 
\begin{equation}\label{5.1.1}
 \lim_{j\to\infty}b_j=0\qquad\text{ and }\qquad\sum_{j\in\N}\frac{1}{b_j^p}\int_{\Lambda  A_{\kappa^j}(O)}g^p(x)\, d\mu(x)<\infty\,.  
\end{equation}
We define, for every $j\in\N$, 
    \[E_j=\{x\in  A_{\kappa^j}(O)  \,:\, |u(x)-m_u(  A_{\kappa^j}(O))|>b_j\}\,,\]
and  $E(O)=\bigcup_{j\geq 1}E_j$. By Lemma~\ref{l3} and \eqref{5.1.1}, we get
\begin{align*}
   &\limsup_{m\to\infty}\sum_{j\geq m}\mathrm{cap}_{s,p}(E(O)\cap  A_{\kappa^j}(O),\Lambda  A_{\kappa^j}(O))\\
   &\qquad=
   \limsup_{m\to\infty}\sum_{j\geq m} \mathrm{cap}_{s,p}\left(\left(\bigcup_{j\geq 1}E_j\right) \cap   A_{\kappa^j}(O),\Lambda  A_{\kappa^j}(O)\right)\\
   &\qquad= \limsup_{m\to\infty}\sum_{j\geq m} \mathrm{cap}_{s,p}(E_j,\Lambda  A_{\kappa^j}(O) )\\
    &\qquad\leq C(p,c_\mu,c_R,\Lambda) \limsup_{m\to\infty}\sum_{j\geq m}\frac{1}{b_j^p}\int_{\Lambda  A_{\kappa^j}(O)}g^p(x)\, d\mu(x)=0.
\end{align*}
It follows that the set $E(O)$ is $(s,p)$-thin at infinity.

Let $\varepsilon>0$ be given. By using  \eqref{median limit 2} and  \eqref{5.1.1}, we obtain  $M\in\N$ such that $|m_u(A_{\kappa^{n}}(O))-c|<\frac{\varepsilon}{2}$ and $b_{n}<\frac{\varepsilon}{2}$ for all $n\geq M$.
  Choose $N\in\N$ such that  $N\ge \kappa^M$. 
 Fix $x\in X\setminus E(O)$ such that $d(x,O)>N\ge \kappa^M$.   Then there exists $n\geq M$ such that $x\in A_{\kappa^{n}}(O)\setminus E_n$ and 
		\begin{align*}
			|u(x)-c|\leq |u(x)-m_u(A_{\kappa^{n}}(O))|+|m_u(A_{\kappa^{n}}(O))-c|< b_{n}+\frac{\varepsilon}{2}<\varepsilon. 
		\end{align*}
We have shown that 
\eqref{function limit}  holds.
  \end{proof}
  
    \begin{example}\label{e.exe}
  We consider the metric measure space $X=\R$ equipped
  with the Euclidean distance and the one-dimensional Lebesgue measure.
  Then inequality \eqref{reverse doubling} holds for $\kappa=2$.
Let $u\colon \R\to \R$ be any measurable function such
  that $u(x)=-1$ for every $x\le -1$ and $u(x)=1$ for every $x\ge 1$.
Choose $O$ to be the origin, so that
  $ A_{2^j}(O)=(-2^{j+1},2^{j+1})\setminus (-2^j,2^j)$
  for every $j\in\N$.
  By the given properties of $u$, we have
$u_{ A_{2^j}(O)}=1$ for all $j\in\N$, and
  therefore the limit of these medians exists, as $j\to\infty$.
  That is, condition \eqref{median limit 2} holds
  with $c=1$. On the other hand, it is clear
  that $\lim_{x\to\infty} u(x)=1$ and $\lim_{x\to-\infty} u(x)=-1$,
  where the two limits are interpreted in the classical way.
On the other hand, our definition for the limit at infinity in \eqref{function limit} does not distinguish between the two ends of the real axis. Therefore, why does this example not contradict Theorem \ref{l5}? 
The two explanations involving  $0<s\leq1$ and $0<p<\infty$ are as follows:
  \begin{itemize}
\item   If $sp\le 1$, then by
  Lemma \ref{med_int}, applied with the left and right halves
  of $ A_{2^j}(O)$, it is straightforward to show that $u\not\in \dot{M}^{s,p}(\R)$
  independent of its values in  $(-1,1)$.
Therefore Theorem \ref{l5} does not apply to $u$.
  \item If $sp>1$, then by Remark \ref{r.neg} with $Q=1$, the real axis
$\R=E(O)$ is an $(s,p)$-thin set at infinity. In particular, this fits as the claimed exceptional set in Theorem \ref{l5} whether or not $u\in \dot{M}^{s,p}(\R)$.
  \end{itemize}
   \end{example}

 In the following variant of  Theorem \ref{l5}, we 
 focus on  regime $sp<\sigma$. Since we always have  $\sigma\le Q$ if  $Q$ is as in 
\eqref{iteration doubling}, Theorem \ref{t.main} excludes
the possibly vacuous regime $sp>Q$.

   \begin{theorem}\label{t.main}
  Let $0<s\leq1$ and $0<p<\infty$ be such that $sp<\sigma$, where $\sigma>0$ is as in \eqref{iteration reverse}.
Let $u\in\dot{M}^{s,p}(X)$ be an $M^{s,p}$-quasicontinuous function.
Then for every $O\in X$ there is a set $E(O)\subset X$, that is $(s,p)$-thin at infinity, such that
\[
    \lim_{\substack{d(x,O)\to \infty \\ x\in X\setminus E(O)}} u(x)=c\,,
\]
 where the constant $c\in\R$ is as in  Lemma \ref{l4}.
\end{theorem}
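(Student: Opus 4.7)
The plan is very short: Theorem \ref{t.main} is essentially the concatenation of Lemma \ref{l4} with Theorem \ref{l5}, so the proof reduces to verifying that the hypotheses of the latter are supplied by the former.

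First I would fix an arbitrary basepoint $O\in X$. Since $u\in\dot{M}^{s,p}(X)$ and the parameters satisfy $sp<\sigma$, Lemma \ref{l4} directly applies and produces a real number $c\in\R$, independent of the choice of $O$, such that
\[
\lim_{j\to\infty} m_u(A_{\kappa^j}(O))=c\,.
\]
This takes care of the hypothesis \eqref{median limit 2} in Theorem \ref{l5}.

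Next, with this $c$ in hand and using that $u$ is by assumption an $M^{s,p}$-quasicontinuous function in $\dot{M}^{s,p}(X)$, I would apply Theorem \ref{l5} at the same basepoint $O$. That theorem yields a set $E(O)\subset X$ which is $(s,p)$-thin at infinity and along whose complement $u$ tends to the same $c$, i.e.
\[
\lim_{\substack{d(x,O)\to\infty\\ x\in X\setminus E(O)}} u(x)=c\,.
\]
This is precisely the conclusion of Theorem \ref{t.main}.

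There is no real obstacle here: the new hypothesis $sp<\sigma$ is exactly what is required for Lemma \ref{l4} to furnish the median limit that Theorem \ref{l5} takes as an input, and it also places us in the regime of Remark \ref{r.pos} where $(s,p)$-thinness at infinity is a substantive condition (so the exceptional set is genuinely negligible and the limit is unique). The only minor comment worth recording in the write-up is that the constant $c$ in the statement is well defined independently of $O$ precisely by the second half of Lemma \ref{l4}, so the exceptional set $E(O)$ may depend on $O$ while the limit value does not.
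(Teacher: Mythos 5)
Your proposal is correct and follows exactly the same route as the paper: apply Lemma \ref{l4} to obtain the basepoint-independent median limit $c$, and then feed this into Theorem \ref{l5} to produce the $(s,p)$-thin exceptional set and the pointwise limit. Nothing is missing.
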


\begin{proof}
By Lemma \ref{l4},  there exists a constant $c\in\R$ such that 
$\lim_{j\to\infty}m_u(  A_{\kappa^j}(O))=c$
for all $O\in X$. We emphasize
that the constant $c$ here is independent of $O\in X$. Hence, the claim follows
from Theorem \ref{l5}.
\end{proof}

The following result is a counterpart of Theorem \ref{t.main} for
$u\in M^{s,p}(X)=\dot{M}^{s,p}(X)\cap L^p(X)$
without the restriction $sp<\sigma$. Due to the additional $L^p(X)$-integrability assumption, we
can now omit this parameter restriction and also infer that the limit at infinity is zero.
     
     \begin{theorem}\label{t.main_Lp}
Let $0<s\leq1$,  $0<p<\infty$, and let $u\in {M}^{s,p}(X)$ be an $M^{s,p}$-quasicontinuous function. Then for every $O\in X$ there is a set $E(O)\subset X$, that is $(s,p)$-thin at infinity, such that
\[
\lim_{\substack{d(x,O)\to \infty \\ x\in X\setminus E(O)}} u(x)=0\,.
\]
\end{theorem}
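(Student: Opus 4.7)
The proof will be a straightforward synthesis of two results already established in the paper, so there is no serious obstacle; the plan is simply to identify which hypotheses of which lemma apply.

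First I would fix an arbitrary basepoint $O\in X$ and observe that since $u\in M^{s,p}(X)=\dot{M}^{s,p}(X)\cap L^p(X)$, we have in particular $u\in L^p(X)$. Therefore Lemma~\ref{median lp cong} immediately yields
\[
\lim_{j\to\infty} m_u(A_{\kappa^j}(O))=0\,.
\]
This is the crucial point that removes any need for the restriction $sp<\sigma$ appearing in Theorem~\ref{t.main}: the integrability $u\in L^p(X)$ gives convergence of the medians to zero directly, without having to prove Cauchyness from an $s$-gradient and relying on the quantitative reverse doubling exponent $\sigma$.

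Next I would invoke Theorem~\ref{l5} with $c=0$. Its hypotheses are exactly that $u\in\dot{M}^{s,p}(X)$ is $M^{s,p}$-quasicontinuous and that $\lim_{j\to\infty} m_u(A_{\kappa^j}(O))=c$ for some $c\in\R$; both are in place, the first by assumption together with the inclusion $M^{s,p}(X)\subset \dot{M}^{s,p}(X)$, and the second by the previous step. Theorem~\ref{l5} then produces a set $E(O)\subset X$, which is $(s,p)$-thin at infinity, such that
\[
\lim_{\substack{d(x,O)\to\infty\\x\in X\setminus E(O)}}u(x)=0\,,
\]
which is the desired conclusion. Since $O\in X$ was arbitrary, this completes the proof.

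The only delicate conceptual point, which I would briefly remark on, is that the conclusion here is stronger than in Theorem~\ref{t.main} in two respects: the parameter restriction $sp<\sigma$ has been dropped, and the limit constant is identified as $0$ rather than an abstract constant $c\in\R$ furnished by Lemma~\ref{l4}. Both improvements come from the single extra hypothesis $u\in L^p(X)$, exploited through Lemma~\ref{median lp cong}.
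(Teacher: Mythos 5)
Your proposal is correct and coincides with the paper's own proof: both apply Lemma~\ref{median lp cong} to get $\lim_{j\to\infty}m_u(A_{\kappa^j}(O))=0$ from $u\in L^p(X)$, and then conclude via Theorem~\ref{l5} with $c=0$. No issues.
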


\begin{proof}
Observe  that $u\in L^p(X)$. Hence, by Lemma \ref{median lp cong}, we have
\[
\lim_{j\to\infty}m_u(  A_{\kappa^j}(O))=0
\]
for all $O\in X$. Since we also have $u\in \dot{M}^{s,p}(X)$, the claim follows
from Theorem \ref{l5}.
\end{proof}

\section{Hausdorff content conditions}\label{s.h}

Our  results in Section \ref{s.thin} involve
an exceptional set that is $(s,p)$-thin at infinity. This
notion is defined in terms of a relative capacity. In this section, we show that such sets are thin at infinity in terms of a Hausdorff content of codimension  $sp-\alpha$, for all $0<\alpha<sp$.    See Theorem~\ref{e.hthin}.  This result provides a geometric interpretation
of $(s,p)$-thin sets, and it  allows us to compare
our results favourably to 
some earlier results  involving homogeneous Newtonian functions \cite{KlKoNg}.
We refer to Theorem \ref{msp haus est} and Remark \ref{r.HN} for further details.

\begin{definition}
Let $0<\rho\le\infty$ and $d\ge 0$.
The ($\rho$-restricted) Hausdorff content of codimension $d$ 
of a set $E\subset X$ is defined by  
\begin{equation*}
\begin{split}
    &\mathcal{H}_{\rho}^{\mu,d}(E)\\&=\inf \left\{\sum_{i}\frac{\mu(B(x_i,r_i))}{r_i^d} \,:\,   E\subset \bigcup_{i}B(x_i,r_i)\text{ where $x_i\in X$
    }\text{and  }0<r_i\le \rho\text{ for all $i$}\right\},
    \end{split}
\end{equation*}
where the infimum is taken over all finite and countable coverings.
\end{definition}

Let $E, F\subset X$. Then  it is easy to show that
\begin{align}\label{haus subadditivity}
    \mathcal{H}_{\rho}^{\mu,d}(E\cup F)\leq \mathcal{H}_{\rho}^{\mu,d}(E)+\mathcal{H}_{\rho}^{\mu,d}(F)\,.
\end{align} 
The subadditivity property \eqref{haus subadditivity} 
together with Lemma \ref{full norm cap and haus est} allows us to omit sets of zero capacity when estimating the Hausdoff content with suitable parameters.
The following lemma is known among experts, and the  technique of proof originates already in  \cite{KoHe}, but we recall the proof for convenience.

\begin{lemma}\label{full norm cap and haus est}
Let $0<s\leq1$, $0<p<\infty$ and $0<\alpha<sp$.  Then 
there exists a constant $C=C(s,p,\alpha,c_\mu)>0$ such that  \[
\mathcal{H}_{\infty}^{\mu,sp-\alpha}(E)\leq C\, \mathrm{Cap}_{M^{s,p}}(E)\,.
     \]
     for every  set $E\subset X$.
\end{lemma}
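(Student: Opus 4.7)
The plan is to invoke a near-optimal test function for $\mathrm{Cap}_{M^{s,p}}(E)$, convert the pointwise condition $u \ge 1$ on a neighbourhood of $E$ into a measure estimate on small balls via the Haj\l asz $s$-gradient, and then sum these estimates against a Vitali-type cover of $E$ to recover the $L^p$-norms on the right-hand side. Assuming $\mathrm{Cap}_{M^{s,p}}(E) < \infty$, I would fix $\eps > 0$ and choose $u \in M^{s,p}(X)$ with $u \ge 1$ on an open set $U \supset E$ such that $\|u\|_{M^{s,p}(X)}^p \le \mathrm{Cap}_{M^{s,p}}(E) + \eps$, together with an $s$-gradient $g \in \mathcal{D}_s(u) \cap L^p(X)$ of near-minimal $L^p$-norm.

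For every $x \in E$ we have $u(x) \ge 1$, so Definition~\ref{d.haj} gives, for $\mu$-a.e.\ $y \in B(x,r)$, the inequality $1 \le u(y) + r^s(g(x) + g(y))$. Selecting the adaptive scale $r_x = \min\{R, (4g(x))^{-1/s}\}$ for a fixed parameter $R > 0$ forces $r_x^s g(x) \le 1/4$, so that $u(y) + r_x^s g(y) \ge 3/4$ for $\mu$-a.e.\ $y \in B(x, r_x)$. Raising to the $p$-th power and integrating yields
\[
\mu(B(x, r_x)) \le C(p) \left( \int_{B(x, r_x)} u^p \, d\mu + r_x^{sp} \int_{B(x, r_x)} g^p \, d\mu \right).
\]

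Next I would apply a standard $5B$-covering lemma to $\{B(x, r_x) : x \in E\}$ to obtain pairwise disjoint balls $B_i = B(x_i, r_{x_i})$ with $E \subset \bigcup_i 5 B_i$. By doubling and the definition of the Hausdorff content,
\[
\mathcal{H}_\infty^{\mu, sp-\alpha}(E) \le C \sum_i \frac{\mu(B_i)}{r_i^{sp-\alpha}} \le C \sum_i \left( r_i^{\alpha-sp} \int_{B_i} u^p\,d\mu + r_i^\alpha \int_{B_i} g^p\,d\mu \right),
\]
where the constants depend on $c_\mu$, $s$, $p$ and $\alpha$. The $g^p$-sum is immediately dominated by $C R^\alpha \|g\|_{L^p(X)}^p$ since $r_i \le R$, $\alpha > 0$ and the $B_i$ are pairwise disjoint.

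The principal obstacle is controlling the $u^p$-sum, because the factor $r_i^{\alpha-sp}$ explodes as $r_i \to 0$. I would handle this by splitting the indices according to whether $r_i = R$ (for which the factor equals $R^{\alpha-sp}$ and the corresponding sum is dominated by a fixed multiple of $\|u\|_{L^p(X)}^p$ by disjointness) or $r_i < R$ (in which case $r_i = (4 g(x_i))^{-1/s}$, so $g(x_i) > (4R)^{-s}$ and the blow-up $r_i^{\alpha-sp}$ is matched by a positive power of $g(x_i)$, allowing the contribution to be absorbed into $\|g\|_{L^p(X)}^p$ via the hypothesis $0 < \alpha < sp$). Combining the bounds and letting $\eps \to 0$ gives $\mathcal{H}_\infty^{\mu, sp-\alpha}(E) \le C\,\mathrm{Cap}_{M^{s,p}}(E)$ with $C = C(s, p, \alpha, c_\mu)$.
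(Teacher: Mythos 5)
There is a genuine gap, and it is exactly at the step you flag as ``the principal obstacle.'' For an index with $r_i<R$ you have $r_i=(4g(x_i))^{-1/s}$, so the exploding factor is $r_i^{\alpha-sp}=(4g(x_i))^{(sp-\alpha)/s}$; but this positive power of $g(x_i)$ multiplies $\int_{B_i}|u|^p\,d\mu$, not $\mu(B_i)$, and there is no mechanism to dominate $g(x_i)^{(sp-\alpha)/s}\int_{B_i}|u|^p\,d\mu$ by $\int_{B_i}g^p\,d\mu$: the pointwise value of $g$ at the centre is unrelated to the integral of $g^p$ over the ball ($g(x_i)$ can be huge while $g$ is tiny on the rest of $B_i$). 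Worse, whenever $B_i\subset U$ one has $\int_{B_i}|u|^p\,d\mu\ge\mu(B_i)$, so the term $r_i^{\alpha-sp}\int_{B_i}|u|^p\,d\mu$ is at least $\mu(B_i)/r_i^{sp-\alpha}$ --- precisely the quantity you are trying to bound. Your single-scale inequality $\mu(B_i)\le C(\int_{B_i}|u|^p+r_i^{sp}\int_{B_i}g^p)$ is essentially circular at small scales inside $U$; the $L^p$-norm of $u$ can only be used at a scale bounded from below, and at small scales one must instead \emph{propagate} the condition $u\ge 1$ up to a fixed scale through the gradient. That is what the paper's proof does: it splits $E$ according to whether $|m_u(B(x,2^{-n_x}))-m_u(B(x,1))|\ge\tfrac12$ or $|m_u(B(x,1))|\ge\tfrac12$. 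In the first case a telescoping sum of median differences over dyadic balls, combined with Lemma~\ref{med_int} and a series-comparison (pigeonhole) argument, produces a ball $B_x$ with $\mu(B_x)r_x^{\alpha-sp}\lesssim\int_{B_x}g^p\,d\mu$, with no $u$-term at all; in the second case the $u$-term appears only at the fixed radius $1$, where $r^{\alpha-sp}=1$ causes no blow-up. Your proposal has no substitute for this two-scale mechanism.

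A secondary issue: you apply the Haj{\l}asz inequality pointwise at every $x\in E$, but Definition~\ref{d.haj} only guarantees it for $x,y$ outside an exceptional $\mu$-null set, and moreover $g(x)$ may equal $+\infty$ (forcing $r_x=0$) on a $\mu$-null set. A $\mu$-null subset of $E$ can still carry positive $\mathcal{H}_{\infty}^{\mu,sp-\alpha}$-content when $sp-\alpha>0$, so these bad points cannot simply be discarded. This is precisely why the paper works with medians $m_u(B(x,r))$, which see only almost-everywhere information but are defined (and $\ge 1$ on small balls contained in $U$) at \emph{every} $x\in E$.
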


\begin{proof}
Fix a set $E\subset X$.
Let $u$ be a test function for $\mathrm{Cap}_{M^{s,p}}(E)$
as in Definition \ref{d.h_cap}.
That is, we have $u\in M^{s,p}(X)$ and $u\geq 1$ on a neighbourhood $U$ of $E$. Fix $x\in E$. 
Since $x\in U$ and $U$ is open,  there exists $n_x\in\N$ so that  
$B(x,2^{-n_x})\subset U$ and thus $m_u(B(x,2^{-n_x}))\geq1$. Therefore we have
\begin{align}\label{eq1}
    1\leq | m_u(B(x,2^{-n_x}))-m_u(B(x,1))|+|m_u(B(x,1))|
\end{align}
for all $x\in E$.

Define $E_1=\{x\in E\,:\,| m_u(B(x,2^{-n_x}))-m_u(B(x,1))|\geq\frac{1}{2} \}$. 
We now estimate the Hausdorff content of $E_1$ and, for this purpose, we fix $g\in\mathcal{D}_s(u)$.
Fix $x\in E_1$. Denote $r_k=2^{-k}$ and $B(x,r_k)=B_k(x)$ for every $k\in\N_0$.  We have
\begin{align*}
     \frac{1}{2}\leq | m_u(B(x,2^{-n_x}))-m_u(B(x,1))|
&\leq \sum_{k=0}^{n_x-1}|m_u(B_{k+1}(x))-m_u(B_{k}(x))|\\
&\leq\sum_{k=0}^\infty|m_u(B_{k+1}(x))-m_u(B_{k}(x))|.
\end{align*}
 and let $k\in\N_0$.   Then by Lemma~\ref{med_int} we have
\begin{align*}
    |m_u(B_k(x))&-m_u(B_{k+1}(x))|^p\\
 & \leq C(s,p)r_k^{sp} \left(\frac{1}{\mu(B_{k}(x))}\int_{B_{k}(x)}g^p(z)\, d\mu(z)+\frac{1}{ \mu(B_{k+1}(x))}\int_{
   B_{k+1}(x) }g^p(y)\, d\mu(y)\right)\\
   &\leq C(s,p, c_\mu)r_k^{sp}\intav_{B_{k}(x)}g^p(y)\, d\mu(y)\,.
\end{align*}
Let $\varepsilon=\alpha/p>0$ then we have
\[
\sum_{k=0}^\infty 2^{-k\varepsilon}\leq C(\alpha, p)\cdot \frac{1}{2}\leq\sum_{k=0}^\infty  C(\alpha,s,p,c_\mu)r_k^{s}\left(\intav_{B_{k}(x)}g^p(y)\, d\mu(y)\right)^\frac{1}{p}\,.
\]
By comparing the above two series we obtain $k\in\N_0$, depending on $x\in E_1$, so that
\[
r_{k}^\alpha = 2^{-k\varepsilon p}\leq C(s,p,\alpha,c_\mu) r_{k}^{sp}\intav_{B_{k}(x)}g^p(y)\, d\mu(y)\,.
\]
Define  $r_x=r_k$ and $B_x=B_{k}(x)=B(x,r_x)$.
Hence, for every $x\in E_1$, there exists a ball $B_x$ centered at $x$ and
 of radius $r_{x}\le 1$ such that
 \begin{align}\label{haus_rad_grad complete norm_3}
  \mu(B_x) r_{x}^{\alpha-sp} \leq C(s,p,\alpha,c_\mu)\int_{B_{x}}g^p(y)\, d\mu(y)\,.
\end{align}
By the  $5r$-covering lemma \cite[Lemma~1.7]{BB}, we obtain countable number of points $x_j\in E_1$, $j\in J\subset\N$, so that the  balls $B_{x_j}$ are pairwise disjoint and $E_1\subset\bigcup_{j\in J} 5B_{x_j}$. By the doubling condition \eqref{doubling} and  
inequality \eqref{haus_rad_grad complete norm_3}, we then have
\begin{align*}
   \mathcal{H}_{\infty}^{\mu,sp-\alpha}(E_1)
   &\leq\sum_{j\in J}\frac{\mu(5B_{x_j})}{(5r_{x_j})^{sp-\alpha}}\\
    &\leq C(s,p,\alpha,c_\mu)\sum_{j\in J}\int_{B_{x_j}}g^p(y)\, d\mu(y)\\
   &\leq C(s,p,\alpha,c_\mu)\int_{X}g^p(y)\, d\mu(y)\,.
\end{align*}
By taking the  infimum over all $g\in\mathcal{D}_s(u)$, we get
\begin{equation}\label{eq0}
     \mathcal{H}_{\infty}^{\mu,sp-\alpha}(E_1)\leq C(s,p,\alpha,c_\mu)\lVert u\rVert_{\dot{M}^{s,p}(X)}^p\,.
\end{equation}

It remains to estimate the Hausdorff
content of $E\setminus E_1$.
Fix $x\in E\setminus E_1$. By \eqref{eq1}, we have $|m_u(B(x,1))|\geq\frac{1}{2}.$ Then by Lemma~\ref{median prps} we get that
\begin{align}\label{eq3}
   \frac{1}{2}\leq|m_u(B(x,1))|\leq m_{|u|}(B(x,1))&\leq \left(2\intav_{B(x,1)}|u(y)|^p\,  d\mu(y)\right)^{\frac{1}{p}}.
\end{align}
Observe that $E\setminus E_1\subset \bigcup_{x\in E\setminus E_1}B(x,1)$. Then by using the $5r$-covering lemma \cite[Lemma~1.7]{BB}, we obtain points
$x_j\in E\setminus E_1$, $j\in J\subset\N$, such that $E\setminus E_1\subset \bigcup_{j\in J}5B_{x_j}$, the balls $B_{x_j}=B(x_j,r_{x_j})$ are pairwise disjoint and $r_{x_j}=1$ for all $j\in J$. By the doubling condition \eqref{doubling} and inequality \eqref{eq3}, we obtain
\begin{equation}\label{eq4}
\begin{split}
 \mathcal{H}_{\infty}^{\mu,sp-\alpha}(E\setminus E_1)&\leq\sum_{j\in J}\frac{\mu(5B_{x_j})}{(5r_{x_j})^{sp-\alpha}}
 \\
  &\leq C(p,c_\mu)\sum_{j\in J} \int_{B_{x_j}}|u(y)|^p\,  d\mu(y) \leq C(p,c_\mu)\lVert u\rVert_{L^p(X)}^p.
  \end{split}
\end{equation}

Finally, by combining inequalities \eqref{haus subadditivity}, \eqref{eq0} and \eqref{eq4} we get that 
\begin{align*}
\mathcal{H}_{\infty}^{\mu,sp-\alpha}(E)&\le \mathcal{H}_{\infty}^{\mu,sp-\alpha}(E_1) + \mathcal{H}_{\infty}^{\mu,sp-\alpha}(E\setminus E_1) \\&\leq C(s,p,\alpha,c_\mu)\lVert u\rVert_{\dot{M}^{s,p}(X)}^p+C(p,c_\mu)\lVert u\rVert_{L^p(X)}^p\le C(s,p,\alpha,c_\mu)\lVert u\rVert_{M^{s,p}(X)}^p\,.
    \end{align*}
Hence, we get the desired inequality 
$\mathcal{H}_{\infty}^{\mu,sp-\alpha}(E)\leq C\, \mathrm{Cap}_{M^{s,p}}(E)$
by taking the infimum over all test functions $u$  for $\mathrm{Cap}_{M^{s,p}}(E)$.
\end{proof}

The following
lemma is a  key tool when showing that an
$(s,p)$-thin set at infinity is
thin at infinity also in terms of Hausdorff contents
of codimension $sp-\alpha$, for all $0<\alpha<sp$.

\begin{lemma}\label{homogeneous Cap and Hauss est}
Let $0<s\leq1$, $0<p<\infty$, $\Lambda>1$ and $0<\alpha<sp$. Fix $O\in X$ and $j\in\N$.  Then for every $E\subset  A_{\kappa^j}(O)$ we have
\[
\frac{\mathcal{H}_{\infty}^{\mu,sp-\alpha}(E)}{\kappa^{j\alpha}}\leq C\, \mathrm{cap}_{s,p}(E,\Lambda  A_{\kappa^j}(O))
\]
     for a constant $C=C(s,p,\alpha,c_\mu,\kappa,\Lambda)$.
\end{lemma}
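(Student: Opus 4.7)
The approach parallels the proof of Lemma \ref{full norm cap and haus est}, but now the test function $v\in\mathcal{A}_{s,p}(E)$ is required to satisfy $v\ge 1$ only on $E$ itself (not on a neighbourhood), and the relative capacity
\[
\frac{\|v\|_{L^p(F)}^p}{\diam(F)^{sp}}+\inf_{g\in\D_s(v)}\|g\|_{L^p(F)}^p,\qquad F=\Lambda A_{\kappa^j}(O),
\]
has two terms that must be matched separately to two pieces of $E$. I set $r=\kappa^j$ and pick a base radius $r_0=c_\Lambda r$ with $c_\Lambda=\tfrac12(1-\Lambda^{-1})$, chosen so that $B(x,r_0)\subset F$ for every $x\in A_{\kappa^j}(O)$. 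Since $v$ is $M^{s,p}$-quasicontinuous and its explicit representative $v^*$ from Theorem \ref{quasicont repesent} is also quasicontinuous, Lemma \ref{prop0-1} gives $v=v^*$ $M^{s,p}$-quasieverywhere. Hence the exceptional set $E_0=\{x\in E:\lim_{\rho\to 0}m_v(B(x,\rho))$ fails or is $<1\}$ has $\mathrm{Cap}_{M^{s,p}}(E_0)=0$, and Lemma \ref{full norm cap and haus est} yields $\mathcal{H}^{\mu,sp-\alpha}_\infty(E_0)=0$.

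For $x\in E\setminus E_0$ the triangle inequality $1\le |v^*(x)-m_v(B(x,r_0))|+|m_v(B(x,r_0))|$ forces $x\in E_1\cup E_2$, where $E_1=\{x\in E\setminus E_0:|v^*(x)-m_v(B(x,r_0))|\ge \tfrac12\}$ and $E_2=\{x\in E\setminus E_0:|m_v(B(x,r_0))|\ge \tfrac12\}$. For $x\in E_1$ I telescope along dyadic radii $r_k=r_0 2^{-k}$: Lemma \ref{med_int} combined with doubling gives, for any $g\in\D_s(v)$, the increment bound $|m_v(B(x,r_{k+1}))-m_v(B(x,r_k))|\le Cr_k^s(\vint_{B(x,r_k)}g^p\,d\mu)^{1/p}$, and summing produces $\tfrac12\le\sum_{k\ge 0}Cr_k^s(\vint_{B(x,r_k)}g^p\,d\mu)^{1/p}$. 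A Hedberg--Koskela pigeonhole against the convergent geometric series $\sum_k 2^{-k\alpha/p}$ then yields, for each $x\in E_1$, an index $k_x$ giving a ball $B_x=B(x,r_x)$ with $r_x=r_0 2^{-k_x}\le r_0$ and $\mu(B_x)\,r_x^{\alpha-sp}\le C\,r_0^\alpha\int_{B_x}g^p\,d\mu$. A $5r$-covering argument plus doubling then produces $\mathcal{H}^{\mu,sp-\alpha}_\infty(E_1)\le C\kappa^{j\alpha}\|g\|_{L^p(F)}^p$ (using $r_0^\alpha\simeq\kappa^{j\alpha}$ and $B_{x_j}\subset F$), and taking the infimum over $g$ produces the gradient contribution of the relative capacity.

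For $E_2$ all selection balls share the common radius $r_0$, so a $5r$-covering combined with the estimate $\tfrac12\le m_{|v|}(B(x,r_0))\le (2\vint_{B(x,r_0)}|v|^p\,d\mu)^{1/p}$ from Lemma \ref{median prps} gives $\mathcal{H}^{\mu,sp-\alpha}_\infty(E_2)\le Cr_0^{\alpha-sp}\|v\|_{L^p(F)}^p$. Since $r_0\simeq\diam(F)\simeq\kappa^{j+1}$ with constants depending only on $\Lambda,\kappa$, this rearranges as
\[
\frac{\mathcal{H}^{\mu,sp-\alpha}_\infty(E_2)}{\kappa^{j\alpha}}\le \frac{C\|v\|_{L^p(F)}^p}{\diam(F)^{sp}},
\]
matching the $L^p$ term of the relative capacity. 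Summing the $E_0$, $E_1$, $E_2$ contributions and taking the infimum over $v\in\mathcal{A}_{s,p}(E)$ completes the proof.

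The main obstacle is the calibration of $r_0$: it must be small enough that every $B(x,r_0)$ with $x\in A_{\kappa^j}(O)$ sits inside $F=\Lambda A_{\kappa^j}(O)$ (to confine the $L^p$-integrals of both $v$ and $g$ to $F$), while simultaneously of order $\diam(F)$ (so the $E_2$-bound $r_0^{\alpha-sp}\|v\|_{L^p(F)}^p$ absorbs cleanly into $\|v\|_{L^p(F)}^p/\diam(F)^{sp}$ without leaving extra powers of $\kappa^j$). The explicit choice $r_0=c_\Lambda\kappa^j$ realises both at the cost of constants depending on $\Lambda$ and $\kappa$, which matches the dependence asserted in the lemma.
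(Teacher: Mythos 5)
Your argument is correct and follows essentially the same route as the paper's proof: reduce to points where $\lim_{\rho\to 0}m_v(B(x,\rho))=v^*(x)=v(x)\ge 1$ using Theorem~\ref{quasicont repesent}, Lemma~\ref{prop0-1} and Lemma~\ref{full norm cap and haus est}, then telescope medians from a base radius $r_0\simeq\kappa^j$ down to $0$, pigeonhole against a geometric series, and run a $5r$-covering, with the base-scale median term feeding the $L^p$ part of the relative capacity. The only difference is organizational: the paper uses a global dichotomy (if some $x\in E$ has $|m_v(B(x,r_0))|\ge\tfrac12$ it covers all of $E$ by the single ball $B(O,\kappa^{j+1})$, otherwise every point enters the telescoping case), whereas you split $E$ pointwise into $E_1\cup E_2$ and cover $E_2$ by balls of radius $r_0$; both versions yield the same bound with the same constant dependencies.
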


\begin{proof}
  Fix a set $E\subset  A_{\kappa^j}(O)$ and denote
 $r_0=(1-\Lambda^{-1})\kappa^j$.
Observe that  $r_0\le (\Lambda -1)\kappa^{j+1}$ and therefore
 $B(x,r_0)\subset \Lambda  A_{\kappa^j}(O)$
 for every $x\in E$.  
  Let $v$ be a test function for $\text{cap}_{s,p}(E,\Lambda  A_{\kappa^j}(O))$ as in Definition \ref{d.relative}. 
  Thus $v\in\dot{M}^{s,p}(X)$ is $M^{s,p}$-quasicontinuous and satisfies $v\ge 1$ in $E$.

We will consider two cases and combine them at the end of the proof.
First we assume that there exists $x\in E$ such that
\begin{equation}\label{e.first_case}
\lvert m_v(B(x,r_0))\rvert\ge \frac{1}{2}\,.
\end{equation}
Then
by Lemma~\ref{median prps} we get that
\[
\frac{1}{2}\le \lvert m_v(B(x,r_0))\rvert\le m_{\lvert v\rvert}(B(x,r_0))\le \left(2\intav_{B(x,r_0)}|v(y)|^p\,  d\mu(y)\right)^{\frac{1}{p}}\,.
\]
By using also inequality \eqref{iteration doubling}, we get
\begin{align*}
\mu(B(O,\kappa^{j+1}))&\le C(c_\mu,\kappa,\Lambda)\mu(B(x,r_0))\\&\le C(p,c_\mu,\kappa,\Lambda)\int_{B(x,r_0)}|v(y)|^p\,  d\mu(y)\le C(p,c_\mu,\kappa,\Lambda)\int_{\Lambda  A_{\kappa^j}(O)}|v(y)|^p\,  d\mu(y)\,.
\end{align*}
Since $E\subset B(O,\kappa^{j+1})$, we can estimate
\begin{equation}\label{e.first}
\begin{split}
\frac{\mathcal{H}_{\infty}^{\mu,sp-\alpha}(E)}{\kappa^{j\alpha}}
&\le \frac{1}{\kappa^{j\alpha}}\cdot \frac{\mu(B(O,\kappa^{j+1}))}{(\kappa^{j+1})^{sp-\alpha}}\le \frac{C(p,c_\mu,\kappa,\Lambda)}{\kappa^{jsp}}\int_{\Lambda  A_{\kappa^j}(O)}|v(y)|^p\,  d\mu(y)
\\&\le \frac{C(s,p,c_\mu,\kappa,\Lambda)}{\diam(\Lambda  A_{\kappa^j}(O))^{sp}}\int_{\Lambda  A_{\kappa^j}(O)}|v(y)|^p\,  d\mu(y)\,.
\end{split}
\end{equation}

We now turn to the second case of the proof by assuming that, 
 for all $x\in E$, 
\begin{equation}\label{e.case2}
\lvert m_v(B(x,r_0))\rvert< \frac{1}{2}\,.
\end{equation}
Let $v^*(x)=\limsup_{r\to 0} m_v(B(x,r))$ for all $x\in X$.
We denote 
\[
E_1=\{x\in E \,:\, v(x)=v^*(x)\}\quad\text{ and }\quad E_2=\{x\in E\,:\, \text{limit }\lim_{r\to 0} m_v(B(x,r))\text{ exists}\}\,.
\]
We also denote $F_1=E\setminus E_1$ and $F_2=E\setminus E_2$.
 Theorem \ref{prop0-1} and Theorem \ref{quasicont repesent} 
combined show that $\mathrm{Cap}_{M^{s,p}}(F_1)=0$ and $\mathrm{Cap}_{M^{s,p}}(F_2)=0$.
Observe that $E_1\cap E_2=E\setminus (F_1\cup F_2)$. 
Inequality \eqref{haus subadditivity} and Lemma \ref{full norm cap and haus est} imply that
\begin{equation}\label{e.ex}
\begin{split}
\mathcal{H}_{\infty}^{\mu,sp-\alpha}(E)
&=\mathcal{H}_{\infty}^{\mu,sp-\alpha}(E_1\cap E_2)+\mathcal{H}_{\infty}^{\mu,sp-\alpha}(F_1\cup F_2)
\\&\le \mathcal{H}_{\infty}^{\mu,sp-\alpha}(E_1\cap E_2)+\mathcal{H}_{\infty}^{\mu,sp-\alpha}(F_1)+\mathcal{H}_{\infty}^{\mu,sp-\alpha}(F_2)=\mathcal{H}_{\infty}^{\mu,sp-\alpha}(E_1\cap E_2)\,.
\end{split}
\end{equation}
Hence, it suffices to estimate $\mathcal{H}_{\infty}^{\mu,sp-\alpha}(E_1\cap E_2)$. 

Fix  $g\in\mathcal{D}^s(v)$.
Let 
 $x\in E_1\cap E_2\subset E$. 
 Define   $r_k=\kappa^{-k}r_0$   and $B_k(x)=B(x,r_k)$ for all $k\in\N_0$.
We observe by using the case inequality \eqref{e.case2} that
\begin{align*}
1\le v(x)=\lvert v(x)\rvert &\le \lvert v(x)-m_v(B(x,r_0))\rvert+\lvert m_v(B(x,r_0))\rvert\\&< \lvert v(x)-m_v(B(x,r_0))\rvert + \frac{1}{2}\,.
\end{align*}
Hence, using also the assumption that $x\in E_1\cap E_2$, we obtain
\begin{equation}\label{e.start}
\begin{split}
\frac{1}{2}\le \lvert v(x)-m_v(B(x,r_0))\rvert
&=\lim_{n\to \infty} \lvert m_v(B(x,r_n))-m_v(B(x,r_0))\rvert\\
&\le \sum_{k=0}^{\infty} \lvert m_v(B(x,r_{k+1}))-m_v(B(x,r_k))\rvert\,.
\end{split}
\end{equation}
Fix $k\in\N_0$.
 Then by Lemma~\ref{med_int} we have 
 \begin{equation}\label{e.sec}
\begin{split}
    |m_v&(B_{k+1}(x))-m_v(B_k(x))|^p\\ 
  &\leq C(s,p)r_k^{sp} \left(\frac{1}{\mu(B_{k+1}(x))}\int_{B_{k+1}(x)}g^p(z)\, d\mu(z)+\frac{1}{ \mu(B_{k}(x))}\int_{
   B_{k}(x) }g^p(y)\, d\mu(y)\right)\\
   &\leq C(s,p,\kappa,c_\mu)r_k^{sp}\intav_{B_{k}(x)}g^p(y)\, d\mu(y)\,.
   \end{split}
   \end{equation}
Let $\varepsilon=\alpha/p>0$. By combining  the estimates \eqref{e.start} and \eqref{e.sec}, we get
\[
\sum_{k=0}^\infty \kappa^{-k\varepsilon}= C(\alpha, p,\kappa)\cdot \frac{1}{2}\leq\sum_{k=0}^\infty  C(s,p,\alpha,\kappa,c_\mu)r_k^{s}\left(\intav_{B_{k}(x)}g^p(y)\, d\mu(y)\right)^\frac{1}{p}\,.
\]
By comparing the above two series we obtain $k\in\N_0$, depending on $x\in E_1\cap E_2$, so that
\[
\frac{r_{k}^\alpha}{r_0^\alpha}=\kappa^{-k\alpha}= \kappa^{-k\varepsilon p}\leq C(s,p,\alpha,\kappa,c_\mu) r_{k}^{sp}\intav_{B_{k}(x)}g^p(y)\, d\mu(y)\,.
\]
We denote $r_x=r_k\le r_0$ and $B_x=B_{k}(x)=B(x,r_x)$. Then, for every $x\in E_1\cap E_2$, there exists a ball $B_x\subset B(x,r_0)\subset \Lambda  A_{\kappa^j}(O)$ centered at $x$ and
 of radius $r_{x}\le r_0$ such that
 \begin{align}\label{haus_rad_grad complete norm_2}
\frac{1}{r_0^\alpha}\cdot \frac{\mu(B_x)}{r_{x}^{sp-\alpha}} \leq C(s,p,\alpha,\kappa,c_\mu)\int_{B_{x}}g^p(y)\, d\mu(y)\,.
\end{align}
By the  $5r$-covering lemma \cite[Lemma~1.7]{BB}, we obtain  points $x_m\in E_1\cap E_2$, $m\in M\subset\N$, so that the  balls $B_{x_m}$ are pairwise disjoint and $E_1\cap E_2\subset\bigcup_{m\in M} 5B_{x_m}$. 

Recall that
$r_0=(1-\Lambda^{-1})\kappa^j$
and $B_{x_m}\subset \Lambda  A_{\kappa^j}(O)$ for all $m$. By inequalities \eqref{doubling} and  
 \eqref{haus_rad_grad complete norm_2}, we then have
\begin{align*}
   \frac{\mathcal{H}_{\infty}^{\mu,sp-\alpha}(E_1\cap E_2)}{\kappa^{j\alpha}}&\le C(\alpha,\Lambda) \frac{\mathcal{H}_{\infty}^{\mu,sp-\alpha}(E_1\cap E_2)}{r_0^\alpha}\\
   &\leq \frac{C(\alpha,\Lambda)}{r_0^\alpha}\sum_{m\in M}\frac{\mu(5B_{x_m})}{(5r_{x_m})^{sp-\alpha}}\\
    &\leq C(s,p,\alpha,\kappa,c_\mu,\Lambda)\sum_{m\in M}\int_{B_{x_m}}g^p(y)\, d\mu(y)\\
   &\leq C(s,p,\alpha,\kappa,c_\mu,\Lambda)\int_{\Lambda  A_{\kappa^j}(O)}g^p(y)\, d\mu(y)\,.
\end{align*}
By taking also \eqref{e.ex} into account and taking the  infimum over all $g\in\mathcal{D}_s(v)$, we get
\begin{equation}\label{eq7}
     \frac{\mathcal{H}_{\infty}^{\mu,sp-\alpha}(E)}{\kappa^{j\alpha}}\le  \frac{\mathcal{H}_{\infty}^{\mu,sp-\alpha}(E_1\cap E_2)}{\kappa^{j\alpha}}\leq C(s,p,\alpha,\kappa,c_\mu,\Lambda)
     \inf_{g\in \mathcal{D}_s(v)}\lVert g\rVert_{L^p(\Lambda  A_{\kappa^j}(O))}^p\,.
\end{equation} 
 
Depending
whether \eqref{e.first_case} or \eqref{e.case2} holds, 
we can use the corresponding inequality \eqref{e.first} or \eqref{eq7}, respectively, to deduce that
\[
\frac{\mathcal{H}_{\infty}^{\mu,sp-\alpha}(E)}{\kappa^{j\alpha}}
\le C(s,p,\alpha,\kappa,c_\mu,\Lambda)\left(\frac{\lVert v\rVert_{L^p(\Lambda  A_{\kappa^j}(O))}^p}{\diam(\Lambda  A_{\kappa^j}(O))^{sp}}+\inf_{g\in \mathcal{D}_s(v)}\lVert g\rVert_{L^p(\Lambda  A_{\kappa^j}(O))}^p\right)\,.
\]
The claim follows by taking infimum over test functions $v$ for $\text{cap}_{s,p}(E,\Lambda  A_{\kappa^j}(O))$.
\end{proof}

Next we  provide some geometric properties
of $(s,p)$-thin sets.

\begin{theorem}\label{e.hthin}
Let $0<s\leq 1$ and $0<p<\infty$ be such that
$sp\le \sigma$, where $\sigma>0$ is as in \eqref{iteration reverse}.
Assume that $O\in X$ and that a set $E(O)\subset X$ is $(s,p)$-thin at infinity.
Then
\begin{equation*}
    \lim_{m\to\infty}\sum_{j\geq m}\frac{\mathcal{H}_{\kappa^{j+1}}^{\mu,sp-\alpha}(E(O)\cap  A_{\kappa^j}(O))}{\kappa^{j\alpha}}=0\qquad \text{ for all }0<\alpha < sp\,.
\end{equation*}
\end{theorem}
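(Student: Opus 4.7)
The strategy is to combine Lemma~\ref{homogeneous Cap and Hauss est} (in a slightly refined form) with the $(s,p)$-thinness hypothesis. Fix $0<\alpha<sp$. The first step is to choose $\Lambda>1$ small enough that $5(1-\Lambda^{-1})\le\kappa$; such a $\Lambda$ exists for every $\kappa>1$ (any $\Lambda>1$ works if $\kappa\ge 5$, and otherwise any $\Lambda\in(1,5/(5-\kappa)]$ works, using that $5/(5-\kappa)>5/4>1$).

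The main step is to observe that, with this choice of $\Lambda$, the proof of Lemma~\ref{homogeneous Cap and Hauss est} actually establishes the stronger statement
\[
\frac{\mathcal{H}_{\kappa^{j+1}}^{\mu,sp-\alpha}(E)}{\kappa^{j\alpha}}\leq C\, \mathrm{cap}_{s,p}(E,\Lambda A_{\kappa^{j}}(O))
\]
for every $E\subset A_{\kappa^j}(O)$, where $C=C(s,p,\alpha,c_\mu,\kappa,\Lambda)$. Indeed, tracing the two cases of that proof: in the case \eqref{e.first_case}, $E\subset B(O,\kappa^{j+1})$ is covered by the single ball $B(O,\kappa^{j+1})$ of radius $\kappa^{j+1}$; in the case \eqref{e.case2}, the $5r$-covering lemma produces balls $5B_{x_m}$ of radii $5r_{x_m}\le 5r_0 = 5(1-\Lambda^{-1})\kappa^j\le\kappa^{j+1}$. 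Hence every covering ball used in either case has radius at most $\kappa^{j+1}$, so the estimate applies to the $\kappa^{j+1}$-restricted content $\mathcal{H}_{\kappa^{j+1}}^{\mu,sp-\alpha}(E)$, not merely to $\mathcal{H}_\infty^{\mu,sp-\alpha}(E)$.

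The final step is a routine summation. Applying the refined bound with $E=E(O)\cap A_{\kappa^j}(O)$ for each $j\geq m$ and summing yields
\[
\sum_{j\geq m}\frac{\mathcal{H}_{\kappa^{j+1}}^{\mu,sp-\alpha}(E(O)\cap A_{\kappa^j}(O))}{\kappa^{j\alpha}}\le C\sum_{j\geq m}\mathrm{cap}_{s,p}(E(O)\cap A_{\kappa^{j}}(O),\Lambda A_{\kappa^{j}}(O))\,.
\]
By the $(s,p)$-thinness of $E(O)$ applied to the fixed $\Lambda>1$ chosen above, the right-hand side tends to $0$ as $m\to\infty$, which gives the claim.

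The main (conceptual) obstacle is the second step: recognizing that the existing proof of Lemma~\ref{homogeneous Cap and Hauss est} already yields the $\kappa^{j+1}$-restricted version after $\Lambda$ is calibrated against $\kappa$. Without this observation, one would have to redo the dyadic median chain argument while carrying an explicit scale cutoff in the covering, which amounts to essentially the same computation; exploiting the flexibility in the choice of $\Lambda$ avoids this duplication cleanly.
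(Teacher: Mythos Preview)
Your approach is genuinely different from the paper's and has an advantage worth noting, but there is one detail in your retracing of Lemma~\ref{homogeneous Cap and Hauss est} that you skip over.

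The paper's proof does not re-open Lemma~\ref{homogeneous Cap and Hauss est}. Instead it invokes an external comparison result (\cite[Theorem~5.3]{MR4866613}) asserting that for codimensions $d<\sigma$ the restricted and unrestricted contents are comparable, $\mathcal{H}_{\rho}^{\mu,d}\le C\,\mathcal{H}_{\infty}^{\mu,d}$, with $C$ independent of $\rho$. Applying this with $d=sp-\alpha$ (which is where the hypothesis $sp\le\sigma$ enters) reduces the $\kappa^{j+1}$-restricted content to $\mathcal{H}_\infty^{\mu,sp-\alpha}$, and then Lemma~\ref{homogeneous Cap and Hauss est} is used as stated. Your route avoids this external reference by calibrating $\Lambda$ so that every covering ball produced in the proof of Lemma~\ref{homogeneous Cap and Hauss est} already has radius $\le\kappa^{j+1}$; this is more self-contained and, notably, never uses $sp\le\sigma$.

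The gap is this: in case~\eqref{e.case2} of Lemma~\ref{homogeneous Cap and Hauss est}, before the $5r$-covering is applied to $E_1\cap E_2$, the proof passes from $E$ to $E_1\cap E_2$ via inequality~\eqref{e.ex}, using that the exceptional sets $F_1,F_2$ have $\mathrm{Cap}_{M^{s,p}}$-capacity zero and hence $\mathcal{H}_\infty^{\mu,sp-\alpha}(F_i)=0$ by Lemma~\ref{full norm cap and haus est}. For your restricted version you need $\mathcal{H}_{\kappa^{j+1}}^{\mu,sp-\alpha}(F_i)=0$, which does not follow formally from $\mathcal{H}_\infty^{\mu,sp-\alpha}(F_i)=0$. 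This is easily repaired: the proof of Lemma~\ref{full norm cap and haus est} uses only covering balls of radius at most $5$, so it actually yields $\mathcal{H}_{5}^{\mu,sp-\alpha}(F_i)\le C\,\mathrm{Cap}_{M^{s,p}}(F_i)=0$; since you are taking $m\to\infty$, it suffices to consider $j$ with $\kappa^{j+1}\ge 5$, for which $\mathcal{H}_{\kappa^{j+1}}^{\mu,sp-\alpha}\le\mathcal{H}_5^{\mu,sp-\alpha}$. You should say this explicitly.
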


  \begin{proof}
Fix  $0<\alpha<sp$ and $\Lambda>1$. 
Since $0<sp-\alpha<sp\le \sigma$, by using \cite[Theorem 5.3]{MR4866613}
with $q=sp-\alpha$ and $c_q=c_\sigma$, we obtain
\[
\mathcal{H}_{\kappa^{j+1}}^{\mu,sp-\alpha}(E(O)\cap  A_{\kappa^j}(O))
\le C(c_\sigma,c_\mu)\mathcal{H}_{\infty}^{\mu,sp-\alpha}(E(O)\cap  A_{\kappa^j}(O))
\]
for every $j\in\N$.
On the other hand, by Lemma~\ref{homogeneous Cap and Hauss est}  and Definition~\ref{sp thiness}, we have
\begin{equation*}\begin{split}
    \limsup_{m\to\infty}\sum_{j\geq m} &\frac{\mathcal{H}_{\infty}^{\mu,sp-\alpha}(E(O)\cap  A_{\kappa^j}(O))}{\kappa^{j\alpha}}\\&\leq C(s,p,\alpha,c_\mu,\kappa,\Lambda)\lim_{m\to\infty}\sum_{j\geq m} \mathrm{cap}_{s,p}(E(O)\cap   A_{\kappa^j}(O),\Lambda  A_{\kappa^j}(O))=0\,.
\end{split}\end{equation*}
Hence, the claim follows by combining the above estimates.
  \end{proof}

The following result is a variant
of Theorem \ref{t.main}, where the exceptional
set $E(O)\subset X$ is shown to be thin at infinity with respect to a Hausdorff content of codimension $sp-\alpha$.
There is also a corresponding counterpart of Theorem \ref{t.main_Lp}, whose formulation we omit.

\begin{theorem}\label{msp haus est}
Let $0<s\leq 1$ and $0<p<\infty$ be such that
$sp<\sigma$, where $\sigma>0$ is as in \eqref{iteration reverse}.
Let $u\in\dot{M}^{s,p}(X)$ be an $M^{s,p}$-quasicontinuous function.
Then for every $O\in X$ there is a set $E(O)\subset X$ such that
\begin{equation}\label{e.h_thin}
    \lim_{m\to\infty}\sum_{j\geq m}\frac{\mathcal{H}_{\kappa^{j+1}}^{\mu,sp-\alpha}(E(O)\cap  A_{\kappa^j}(O))}{\kappa^{j\alpha}}=0\qquad \text{ for all }0<\alpha < sp
\end{equation}
and
 \begin{equation}\label{function limit_main_4}
    \lim_{\substack{d(x,O)\to \infty \\ x\in X\setminus E(O)}} u(x)=c\,,
 \end{equation}
 where $c\in\R$ is as in  Lemma \ref{l4}.
\end{theorem}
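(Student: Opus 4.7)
The plan is to observe that this theorem is essentially a direct synthesis of two results already established in the paper: Theorem \ref{t.main} gives the pointwise limit along the complement of an $(s,p)$-thin set at infinity, and Theorem \ref{e.hthin} translates $(s,p)$-thinness at infinity into the Hausdorff content decay condition stated in \eqref{e.h_thin}. So the proof amounts to invoking both with the same exceptional set $E(O)$.

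More concretely, I would first fix $O\in X$ and apply Theorem \ref{t.main} to the $M^{s,p}$-quasicontinuous function $u\in\dot{M}^{s,p}(X)$ under our standing assumption $sp<\sigma$. This produces a set $E(O)\subset X$ that is $(s,p)$-thin at infinity in the sense of Definition \ref{sp thiness}, and along whose complement the limit \eqref{function limit_main_4} holds with $c\in\R$ as provided by Lemma \ref{l4} (in particular, $c$ is independent of the basepoint). This already establishes the pointwise limit claim.

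It then remains to show \eqref{e.h_thin} for this same $E(O)$. Since $sp<\sigma$ implies $sp\le \sigma$, the hypothesis of Theorem \ref{e.hthin} is met for the set $E(O)\subset X$, which we have just exhibited as $(s,p)$-thin at infinity. Applying Theorem \ref{e.hthin} therefore yields
\[
\lim_{m\to\infty}\sum_{j\geq m}\frac{\mathcal{H}_{\kappa^{j+1}}^{\mu,sp-\alpha}(E(O)\cap A_{\kappa^{j}}(O))}{\kappa^{j\alpha}}=0
\]
for every $0<\alpha<sp$, which is precisely \eqref{e.h_thin}.

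Since both conclusions are obtained for one and the same $E(O)$, the proof is complete. There is no real obstacle here beyond correctly matching the parameter hypotheses: the condition $sp<\sigma$ in the statement is used in Theorem \ref{t.main} to control the Cauchy tail estimate that produces the limit $c$ via Lemma \ref{l4}, while the weaker inequality $sp\le \sigma$ suffices for Theorem \ref{e.hthin}. Both are therefore available simultaneously under the standing hypothesis $sp<\sigma$.
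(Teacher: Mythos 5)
Your proposal is correct and follows exactly the paper's own argument: apply Theorem \ref{t.main} to obtain an $(s,p)$-thin exceptional set $E(O)$ with the pointwise limit \eqref{function limit_main_4}, then apply Theorem \ref{e.hthin} (valid since $sp<\sigma$ implies $sp\le\sigma$) to convert thinness into the Hausdorff content condition \eqref{e.h_thin}. Nothing further is needed.
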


  \begin{proof}
  Fix $O\in X$.
By  Theorem~\ref{t.main} we obtain a set $E(O)\subset X$ which is $(s,p)$-thin at infinity and satisfies  \eqref{function limit_main_4}. Then by Theorem~\ref{e.hthin}, we obtain 
\[
    \lim_{m\to\infty}\sum_{j\geq m}\frac{\mathcal{H}_{\kappa^{j+1}}^{\mu,sp-\alpha}(E(O)\cap  A_{\kappa^j}(O))}{\kappa^{j\alpha}}=0\qquad \text{ for all }0<\alpha < sp\,.
    \]
Hence, the claim follows.     
  \end{proof}

  \begin{remark}\label{r.HN}
This remark applies Poincar\'e inequalities and upper gradients, 
and we refer to  \cite{BB,MR3363168} for their definitions.
    Let $1<p<\sigma$, where $\sigma>0$ is defined as in \eqref{iteration reverse}. 
    Assume that $X$ supports a
  $q$-Poincar\'e inequality for some $1\le q<p<\infty$.
    Denote
  by $\dot{N}^{1,p}(X)\subset L^0(X)$ the  space of functions that are integrable on balls and have
  a $p$-integrable upper gradient $g$ in $X$.
 This vector space is equipped with the seminorm
  \[
  \lVert u\rVert_{\dot{N}^{1,p}(X)}=\inf_g\lVert g\rVert_{L^p(X)}\,,
\]
where the infimum is taken over all upper gradients $g$ of $u\in \dot{N}^{1,p}(X)$.
Under these assumptions $\dot{N}^{1,p}(X)\subset\dot{M}^{1,p}(X)$ and the embedding is bounded; these facts  follow
from the usual chaining argument and boundedness of maximal operator
on $L^{p/q}(X)$, we refer to \cite[Theorem 9.4]{MR2039955}. Hence, 
our Theorem~\ref{msp haus est} applies to
 functions in homogeneous Newton--Sobolev space $\dot{N}^{1,p}(X)$.
Consequently, this theorem generalizes \cite[Theorem 1.1]{KlKoNg}, which
studies
limits at infinity for functions in $\dot{N}^{1,p}(X)$.
The basic setting in \cite{KlKoNg} is comprised of an unbounded complete metric space $X$ equipped with an Ahlfors $d$-regular measure $\mu$ supporting a $p$-Poincar\'e inequality with $1\le p < d$. In this setting $\sigma=d=Q$, $X$ is connected and a $q$-Poincar\'e
inequality holds for some $1\le q<p$ by the Keith--Zhong theorem \cite{MR2415381,MR3363168}.
In this setting, the  `$p$-thin set at infinity' condition in \cite{KlKoNg}  is essentially  the same as \eqref{e.h_thin} with $s=1$ and $\kappa =2$. Note that
$X$ is connected by the assumed Poincar\'e inequality, and therefore we can choose $\kappa=2$ in \eqref{reverse doubling}.
 \end{remark}

\section{Application to fractional Sobolev spaces}\label{s.fract_app}

Our main results Theorem \ref{t.main}, Theorem \ref{t.main_Lp} and Theorem \ref{msp haus est} are concerned with limits of Haj{\l}asz--Sobolev functions at infinity.
We    initially   chose to work with a homogeneous Haj{\l}asz--Sobolev  space $\dot{M}^{s,p}(X)$ for two reasons. First, this space is relatively easy to work with due to a simple definition of $s$-gradients. Second,
this space is robust since several other function spaces admit a bounded embedding into a homogeneous Haj{\l}asz--Sobolev space.   The reader may consider
Remark~\ref{r.HN} for an example. For another example,  
assuming that  $0<p,q\le \infty$ and $0<s\le 1$, then  so-called Haj{\l}asz--Triebel--Lizorkin
space $\dot{M}^{s,p}_q(X)$ embeds
boundedly  to the homogeneous  Haj{\l}asz--Sobolev space $\dot{M}^{s,p}(X)$, see \cite[Proposition 2.1]{MR2764899}.

In this section, we use our
results to study limits at infinity for
fractional Sobolev functions.  This complements and extends the recent results in \cite{AKM}
that apply in Euclidean spaces.

\begin{definition}
Let $0<s\leq1$ and $0<p,q<\infty$. The homogeneous fractional Sobolev space $\dot{W}_q^{s,p}(X)$ consists of all  $u\in L^0(X)$ for which  
	\[
	\|u\|_{\dot{W}_q^{s,p}(X)} = \left(\int_{X}\left(\int_{X}\frac{|u(x)-u(y)|^q}{d(x,y)^{sq}\mu(B(x,d(x,y)))}\,d\mu(y)\right)^{\frac{p}{q}}\,d\mu(x)\right)^\frac{1}{p}<\infty\,.
	\]
The Fractional Sobolev space  $W^{s,p}_q(X)$ is $\dot{W}^{s,p}_q(X)\cap L^p(X)$ equipped with the quasi-seminorm 
\[
     \|u\|_{W^{s,p}_q(X)}=\|u\|_{L^p(X)}+\|u\|_{\dot{W}_q^{s,p}(X)}\,.
\]

If $q=p$, then $W^{s,p}_q(X)=W^{s,p}_p(X)$ is a generalization of the well known Gagliardo--Sobolev space or a fractional Sobolev space from the  Euclidean setting. We refer to 
\cite{MR2944369,MR4480576,MR4567945,MR2250142} for expositions on these  spaces
that have attracted plenty of interest in recent years.

 The following embedding lemma might be of independent interest, since it covers
the full range $p,q\in (0,\infty)$.
When $p,q\in (1,\infty)$ this lemma is a  consequence
of  \cite[Lemma 6.5]{MR4894884}. We also refer to \cite[Proposition 4.1]{MR2764899}.

\begin{lemma}\label{W,s,p,q-M,s,p}
 Let $0<s\leq1$ and $0<p,q<\infty$. Then 
 \[ 
 \dot{W}^{s,p}_q(X)\subset\dot{M}^{s,p}(X)
 \]
    and
    \begin{equation*}
    \|u\|_{\dot{M}^{s,p}(X)}\leq C(q,c_{\mu}) \|u\|_{\dot{W}^{s,p}_q(X)}
    \end{equation*}
    for all $u\in \dot{W}^{s,p}_q(X).$
\end{lemma}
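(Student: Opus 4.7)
The plan is to build an explicit candidate $s$-gradient $g$ of $u$ directly from the integrand in $\|u\|_{\dot{W}^{s,p}_q(X)}$, and then verify the pointwise gradient inequality of Definition~\ref{d.haj} by averaging a triangle-type estimate over a single ball. Concretely, I would set
\[
g(x)=\left(\int_X\frac{|u(x)-u(z)|^q}{d(x,z)^{sq}\mu(B(x,d(x,z)))}\,d\mu(z)\right)^{1/q},\qquad x\in X,
\]
so that Fubini gives $\|g\|_{L^p(X)}^p=\|u\|_{\dot{W}^{s,p}_q(X)}^p$. If I can show that $Cg\in\mathcal{D}_s(u)$ for some $C=C(q,c_\mu)$, then the desired embedding and the norm bound follow at once.

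To verify the gradient inequality, I would fix $x,y\in X$ with $u(x),u(y)\in\mathbb{R}$ (which excludes only a set of $\mu$-measure zero since $u\in L^0(X)$) and set $r=d(x,y)>0$, $B=B(x,2r)$. Note that $y\in B$ and, by the triangle inequality, every $z\in B$ satisfies $d(y,z)<3r$. I would then proceed in two steps. First, the power-$q$ triangle inequality $|u(x)-u(y)|^q\le C(q)(|u(x)-u(z)|^q+|u(z)-u(y)|^q)$, valid for all $q>0$ with $C(q)=\max\{1,2^{q-1}\}$, is integrated in $z$ over $B$ to yield
\[
|u(x)-u(y)|^q\le C(q)\left(\vint_{B}|u(x)-u(z)|^q\,d\mu(z)+\vint_{B}|u(z)-u(y)|^q\,d\mu(z)\right).
\]
Second, I would bound each average by $g(x)^q$ and $g(y)^q$ respectively, by restricting the defining integral of $g$ to $B$ and exploiting the monotonicity of the denominator: for $z\in B$ we have $d(x,z)\le 2r$ and $\mu(B(x,d(x,z)))\le\mu(B)$, hence $\vint_B|u(x)-u(z)|^q\,d\mu(z)\le (2r)^{sq}g(x)^q$. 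For the second term, $d(y,z)<3r$ and $\mu(B(y,d(y,z)))\le\mu(B(y,3r))\le C(c_\mu)\mu(B(y,r))\le C(c_\mu)\mu(B)$ by doubling and the inclusion $B(y,r)\subset B$; this gives $\vint_B|u(z)-u(y)|^q\,d\mu(z)\le C(c_\mu)(3r)^{sq}g(y)^q$.

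Combining these two steps yields $|u(x)-u(y)|^q\le C(q,c_\mu)r^{sq}(g(x)^q+g(y)^q)$, and the elementary inequality $(a^q+b^q)^{1/q}\le C(q)(a+b)$ for $a,b\ge 0$ then produces $|u(x)-u(y)|\le C(q,c_\mu)d(x,y)^s(g(x)+g(y))$, so $C(q,c_\mu)g\in\mathcal{D}_s(u)$, giving the stated norm bound. I do not foresee a real obstacle: the only technical points are that the triangle inequality constant and the $\ell^q$--$\ell^1$ comparison constant behave differently for $q\ge 1$ and $0<q<1$ but remain finite in both regimes, and that the doubling comparison $\mu(B(y,3r))\le C(c_\mu)\mu(B(x,2r))$ uses only iteration of \eqref{doubling}. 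The measurability of $g$ on $X$ is routine from the measurability of the integrand.
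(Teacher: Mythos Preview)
Your argument is correct and is actually more direct than the paper's. Both proofs use the same candidate gradient
\[
g(x)=\left(\int_X\frac{|u(x)-u(z)|^q}{d(x,z)^{sq}\mu(B(x,d(x,z)))}\,d\mu(z)\right)^{1/q},
\]
and observe that $\|g\|_{L^p(X)}=\|u\|_{\dot{W}^{s,p}_q(X)}$ (this is just the definition, not Fubini as you wrote---a harmless slip). The difference lies in how the pointwise inequality \eqref{s grad define} is verified. The paper routes the estimate through medians: it splits
\[
|u(x)-u(y)|\le |u(x)-m_u(B(x,d(x,y)))|+|u(y)-m_u(B(y,d(x,y)))|+|m_u(B(x,d(x,y)))-m_u(B(y,d(x,y)))|
\]
and then applies Lemma~\ref{med_another lemma} and Lemma~\ref{median prps}(c) to bound each piece. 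Your approach instead integrates the elementary inequality $|u(x)-u(y)|^q\le C(q)\bigl(|u(x)-u(z)|^q+|u(z)-u(y)|^q\bigr)$ in $z$ over a single ball $B(x,2r)$, then compares each average with $g(x)^q$ and $g(y)^q$ by monotonicity of the denominator together with doubling. This is shorter and self-contained, making no use of the median machinery; the paper's route, while longer, stays consistent with the median-based framework used throughout the rest of the article. Both methods handle the full range $0<q<\infty$, with the $q$-dependent constants behaving exactly as you indicated.
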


\begin{proof}
 Let $u\in \dot{W}^{s,p}_q(X)$. Define $g\colon X\to[0,\infty]$ as
  \begin{equation*}
      g(x)=\left(\int_{X}\frac{|u(x)-u(z)|^q}{d(x,z)^{sq}\mu(B(x,d(x,z)))}\,d\mu(z)\right)^\frac{1}{q}\,,\qquad x\in X\,.
  \end{equation*}
  Observe that \[\lVert g\rVert_{L^p(X)}=\lVert u\rVert_{\dot{W}^{s,p}_q(X)}<\infty\,.\]
Hence, it suffices to show that a constant multiple of $g$ is an $s$-gradient of $u$.
  
  Since $u\in L^0(X)$, there exists a set $E\subset X$ such that
  $\mu(E)=0$ and $u$ is finite in $X\setminus E$.
  Fix $x,y\in X\setminus E$ such that $x\not=y$. Then 
  \begin{equation}\begin{split}\label{W,s,p cont M,s,p eq0}
      |u(x)-u(y)|&\leq |u(x)-m_u(B(x,d(x,y)))|+|u(y)-m_u(B(y,d(x,y)))|\\
      &\hspace{35mm}+|m_u(B(x,d(x,y)))-m_u(B(y,d(x,y)))|\,.
  \end{split}\end{equation}
 Using Lemma~\ref{med_another lemma}, 
 with $E=B(x,d(x,y))$ and the constant $c=u(x)\in\R$,
 we obtain a set
$\tilde{B}_{x,y}\subset B(x,d(x,y))$ such that $2\mu(\tilde{B}_{x,y})\geq \mu({B}(x,d(x,y)))$ and 
\[
|u(x)-m_u(B(x,d(x,y)))|=|m_u(B(x,d(x,y)))-c|\le |u(z)-c|=|u(x)-u(z)|
\] for all $z\in\tilde{B}_{x,y}$. Therefore
\begin{align*}
|u(x)-m_u(B(x,d(x,y)))|&\leq\left(\int_{\tilde{B}_{x,y}}\frac{|u(x)-u(z)|^q}{\mu(\tilde{B}_{x,y})}\,d\mu(z)\right)^\frac{1}{q}\\
&\leq 2^\frac{1}{q}\left(\int_{{B}(x,d(x,y))}\frac{|u(x)-u(z)|^q}{\mu({B}(x,d(x,y)))}\,d\mu(z)\right)^\frac{1}{q}\\
&\leq 2^\frac{1}{q}d(x,y)^s\left(\int_{{B}(x,d(x,y))}\frac{|u(x)-u(z)|^q}{d(x,z)^{sq}\mu({B}(x,d(x,z)))}\,d\mu(z)\right)^\frac{1}{q}\,,
\end{align*}
where the last step follows since $d(x,z)< d(x,y)$ for all $z\in {B}(x,d(x,y))$. Thus,
\begin{equation}\label{W,s,p cont M,s,p eq1}
    |u(x)-m_u(B(x,d(x,y)))|\leq 2^\frac{1}{q}d(x,y)^sg(x).
\end{equation}
In a similar way, by interchanging $x$ and $y$, we obtain
\begin{equation}\label{W,s,p cont M,s,p eq2}
    |u(y)-m_u(B(y,d(x,y)))|\leq 2^\frac{1}{q}d(x,y)^sg(y).
\end{equation}

For the remaining term in \eqref{W,s,p cont M,s,p eq0}, we have by Lemma~\ref{median prps} and triangle inequality, 
\begin{equation}\label{W,s,p cont M,s,p eq3}
\begin{split}
     |m_u&(B(x,d(x,y)))-m_u(B(y,d(x,y)))|\\
     & = |m_{u-m_u(B(y,d(x,y)))}(B(x,d(x,y)))|\\
     & \le m_{\lvert u-m_u(B(y,d(x,y)))\rvert}(B(x,d(x,y)))\\
     &\leq \left(2\intav_{B(x,d(x,y))}|u(z)-m_u(B(y,d(x,y)))|^q\, d\mu(z)\right)^\frac{1}{q}\\
     &\leq C(q)\left(\intav_{B(x,d(x,y))}|u(z)-u(y)|^q\,d\mu(z)\right.\\
     &\left.\qquad\qquad+\intav_{B(x,d(x,y))}|u(y)-m_u(B(y,d(x,y)))|^q\, d\mu(z)\right)^\frac{1}{q}.
\end{split}
\end{equation}
Using \eqref{W,s,p cont M,s,p eq3} and \eqref{W,s,p cont M,s,p eq2}, followed triangle inequality and
\eqref{doubling}, we get
\begin{equation}\label{W,s,p,q, M,s,p,q eq000}
    \begin{split}
        |m_u&(B(x,d(x,y)))-m_u(B(y,d(x,y)))|\\
         &\leq C(q)\left(\int_{B(x,d(x,y))}\frac{|u(z)-u(y)|^q}{\mu(B(x,d(x,y)))}\,d\mu(z)+|u(y)-m_u(B(y,d(x,y)))|^q\, \right)^\frac{1}{q}\\
          &\leq C(q)\left(\int_{B(y,2d(x,y))}\frac{|u(z)-u(y)|^q}{\mu(B(x,d(x,y)))}\,d\mu(z)+2(d(x,y)^sg(y))^q\, \right)^\frac{1}{q}\\
          &\leq C(q)\left((2d(x,y))^{sq} c_\mu^2\int_{B(y,2d(x,y))}\frac{|u(y)-u(z)|^q}{d(y,z)^{sq}\mu(B(y,d(y,z)))}\,d\mu(z)+2(d(x,y)^sg(y))^q\, \right)^\frac{1}{q}\\
           &\leq C(q,c_{\mu})\left(d(x,y)^{sq}g^q(y)+d(x,y)^{sq}g^q(y) \right)^\frac{1}{q}
           \leq C(q,c_{\mu})d(x,y)^sg(y)\,.
          \end{split}
\end{equation}
Combining estimates \eqref{W,s,p cont M,s,p eq0}, \eqref{W,s,p cont M,s,p eq1}, \eqref{W,s,p cont M,s,p eq2} and \eqref{W,s,p,q, M,s,p,q eq000}, we get
\begin{equation*}
    |u(x)-u(y)|\leq C(q,c_{\mu})d(x,y)^s(g(x)+g(y))
\end{equation*}
for every $x,y\in X\setminus E$. Define  $h=C(q,c_{\mu})g$. Then $h\in \mathcal{D}_s(u)$ and since
$u\in \dot{W}^{s,p}_q(X)$, we get
\begin{align*}
    \|u\|_{\dot{M}^{s,p}(X)}&\leq \|h\|_{L^p(X)}= C(q,c_{\mu})\lVert u\rVert_{\dot{W}^{s,p}_q(X)}<\infty\,.
    \end{align*}
 We can conclude that $u\in\dot{M}^{s,p}(X)$.
\end{proof}

We  need the notion of quasicontinuity in the fractional Sobolev setting.
For this purpose, we first define the following fractional Sobolev capacity.
The reader is encouraged to recall Definition~\ref{d.h_cap} for the corresponding definition of a  Haj{\l}asz capacity.

      \begin{definition}
Let $0<s\leq1$ and $0<p,q<\infty$. 
The $W^{s,p}_q$-capacity of a set $E\subset X$ is 
\begin{equation*}
   \mathrm{Cap}_{W^{s,p}_q}(E)=\inf\left\{\|u\|_{W^{s,p}_q(X)}^p\,:\, u\in W^{s,p}_q(X)  \text{ and } u\geq1 \text{ on a neighbourhood of $E$}\right\}.
\end{equation*}
If there are no test functions that satisfy the above conditions, we set $\mathrm{Cap}_{W^{s,p}_q}(E)=\infty$.\end{definition}
\end{definition}

Lemma \ref{W,s,p,q-M,s,p}  implies the following
comparison of capacities. See  \cite{MR4894884,MR4870861} for similar results.

\begin{lemma}\label{cap comparison}
Let $0<s\leq1$ and $0<p,q<\infty$. Then there exists a constant $C=C(p,q, c_\mu)$ such that, for every $E\subset X$, we have
    \begin{equation}
        \mathrm{Cap}_{M^{s,p}}(E)\leq C\,\mathrm{Cap}_{W^{s,p}_q}(E)\,.
    \end{equation}
     \end{lemma}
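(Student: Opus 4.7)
The plan is to show directly that every admissible test function for $\mathrm{Cap}_{W^{s,p}_q}(E)$ is also admissible for $\mathrm{Cap}_{M^{s,p}}(E)$, with a comparable norm, and then take the infimum. The case $\mathrm{Cap}_{W^{s,p}_q}(E)=\infty$ is trivial, so we may assume it is finite.

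First I would fix an arbitrary $\varepsilon>0$ and choose a test function $u$ for $\mathrm{Cap}_{W^{s,p}_q}(E)$, that is, $u\in W^{s,p}_q(X)=\dot{W}^{s,p}_q(X)\cap L^p(X)$ with $u\ge 1$ on some neighborhood of $E$, such that $\|u\|_{W^{s,p}_q(X)}^p\le \mathrm{Cap}_{W^{s,p}_q}(E)+\varepsilon$. By Lemma~\ref{W,s,p,q-M,s,p} we have $u\in\dot{M}^{s,p}(X)$ with
\[
\|u\|_{\dot{M}^{s,p}(X)}\le C(q,c_\mu)\,\|u\|_{\dot{W}^{s,p}_q(X)}\,,
\]
and since $u\in L^p(X)$, we conclude $u\in M^{s,p}(X)$. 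Hence $u$ is an admissible test function for $\mathrm{Cap}_{M^{s,p}}(E)$.

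Next I would estimate the relevant norm by combining the two components of $\|u\|_{M^{s,p}(X)}$:
\[
\|u\|_{M^{s,p}(X)}=\|u\|_{L^p(X)}+\|u\|_{\dot{M}^{s,p}(X)}\le (1+C(q,c_\mu))\bigl(\|u\|_{L^p(X)}+\|u\|_{\dot{W}^{s,p}_q(X)}\bigr)=C(q,c_\mu)\,\|u\|_{W^{s,p}_q(X)}\,.
\]
Raising this to the $p$-th power, with a constant depending additionally on $p$, yields
\[
\mathrm{Cap}_{M^{s,p}}(E)\le \|u\|_{M^{s,p}(X)}^p \le C(p,q,c_\mu)\,\|u\|_{W^{s,p}_q(X)}^p \le C(p,q,c_\mu)\,\bigl(\mathrm{Cap}_{W^{s,p}_q}(E)+\varepsilon\bigr)\,.
\]
Letting $\varepsilon\to 0$ gives the desired inequality.

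There is no real obstacle: the content of the lemma has already been carried out in Lemma~\ref{W,s,p,q-M,s,p}, and the only additional work is to verify that admissibility transfers from the $W^{s,p}_q$ side to the $M^{s,p}$ side (which is immediate since the pointwise condition $u\ge 1$ on a neighborhood of $E$ is the same) and to combine the homogeneous seminorm with the $L^p$-norm, which is a routine algebraic step.
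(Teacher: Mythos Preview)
Your proof is correct and follows essentially the same approach as the paper: take an admissible test function for $\mathrm{Cap}_{W^{s,p}_q}(E)$, use Lemma~\ref{W,s,p,q-M,s,p} to transfer it to $M^{s,p}(X)$ with comparable norm, and pass to the infimum. The only cosmetic difference is that you use an explicit $\varepsilon$ argument where the paper takes the infimum directly.
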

    
    \begin{proof}
    Fix a set $E\subset X$ such that  $\mathrm{Cap}_{W^{s,p}_q}(E)<\infty$. Let 
    $u$ be a test function for $\mathrm{Cap}_{W^{s,p}_q}(E)$. Then
    $u\in W^{s,p}_q(X)$ and $u\geq 1$ on a neighbourhood of $E$.
    Lemma \ref{W,s,p,q-M,s,p} implies that $u\in M^{s,p}(X)$  and
    \begin{align*}
    \mathrm{Cap}_{M^{s,p}}(E) \le \lVert u\rVert_{M^{s,p}(X)}^p
    &=\left(\lVert u\rVert_{L^p(X)} + \lVert u\rVert_{\dot{M}^{s,p}(X)}\right)^p
    \\&\le \left(\lVert u\rVert_{L^p(X)} + C(q,c_\mu)\lVert u\rVert_{\dot{W}^{s,p}(X)}\right)^p\le C(p,q,c_\mu)\lVert u\rVert_{W^{s,p}(X)}^p\,.
\end{align*}
The claim follows by taking infimum over all test functions $u$ as above.
    \end{proof}
    
    The following definition of quasicontinuity is a counterpart of  Definition~\ref{d.qc}.

\begin{definition}
    A function $u\in L^0(X)$ is  $W^{s,p}_q$-quasicontinuous in $X$, if for all $\varepsilon>0$ there exists a set $E\subset X$ such that $\mathrm{Cap}_{W^{s,p}_q}(E)<\varepsilon$ and the restriction of $u$ to $X\setminus E$ is finite and continuous.
\end{definition}
   
The following theorem is concerned with 
the limit at infinity for functions in homogeneous fractional Sobolev spaces.
We refer to \cite{AKM} for similar results in Euclidean spaces.

\begin{theorem}\label{W,s,p,q 1}
Let $0<s\leq 1$ and $0<p,q<\infty$ be such that
$sp<\sigma$, where $\sigma$ is as in \eqref{iteration reverse}.
Let $u\in\dot{W}^{s,p}_q(X)$ be an $W^{s,p}_q$-quasicontinuous function.
Then for every $O\in X$ there is a set $E(O)\subset X$ such that
\begin{equation*}
    \lim_{m\to\infty}\sum_{j\geq m}\frac{\mathcal{H}_{\kappa^{j+1}}^{\mu,sp-\alpha}(E(O)\cap  A_{\kappa^j}(O))}{\kappa^{j\alpha}}=0\qquad \text{ for all }0<\alpha < sp
\end{equation*}
and
\[
    \lim_{\substack{d(x,O)\to \infty \\ x\in X\setminus E(O)}} u(x)=c\,,
    \]
 where $c\in\R$ is as in  Lemma \ref{l4}.
\end{theorem}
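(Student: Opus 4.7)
The plan is to reduce Theorem~\ref{W,s,p,q 1} to Theorem~\ref{msp haus est} by transferring the hypotheses from the fractional Sobolev setting to the Haj{\l}asz--Sobolev setting via the embedding results already established in Section~\ref{s.fract_app}. Concretely, I would verify two things: first that $u \in \dot{M}^{s,p}(X)$, and second that $u$ is $M^{s,p}$-quasicontinuous. Once both are in hand, Theorem~\ref{msp haus est} applies verbatim and produces the desired exceptional set $E(O)$ together with the limit $c \in \R$ from Lemma~\ref{l4}, with no further analysis required.

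For the first point, since $u \in \dot{W}^{s,p}_q(X)$, Lemma~\ref{W,s,p,q-M,s,p} gives directly that $u \in \dot{M}^{s,p}(X)$ with the quantitative norm bound $\|u\|_{\dot{M}^{s,p}(X)}\le C(q,c_\mu)\|u\|_{\dot{W}^{s,p}_q(X)}$. For the second point, the transfer of quasicontinuity is the only nontrivial move, and it is handled by Lemma~\ref{cap comparison}. Given $\varepsilon>0$ and the constant $C=C(p,q,c_\mu)$ in that lemma, $W^{s,p}_q$-quasicontinuity of $u$ yields a set $E\subset X$ with $\mathrm{Cap}_{W^{s,p}_q}(E)<\varepsilon/C$ such that $u|_{X\setminus E}$ is finite and continuous. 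By Lemma~\ref{cap comparison},
\[
\mathrm{Cap}_{M^{s,p}}(E)\le C\,\mathrm{Cap}_{W^{s,p}_q}(E)<\varepsilon,
\]
so $E$ is admissible in the definition of $M^{s,p}$-quasicontinuity. Since $\varepsilon>0$ was arbitrary, $u$ is $M^{s,p}$-quasicontinuous in the sense of Definition~\ref{d.qc}.

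With both ingredients established, and since $sp<\sigma$ is assumed, Theorem~\ref{msp haus est} applies to $u$. For each $O\in X$ it produces a set $E(O)\subset X$ satisfying the Hausdorff content decay condition
\[
\lim_{m\to\infty}\sum_{j\geq m}\frac{\mathcal{H}_{\kappa^{j+1}}^{\mu,sp-\alpha}(E(O)\cap A_{\kappa^j}(O))}{\kappa^{j\alpha}}=0
\]
for every $0<\alpha<sp$, together with the pointwise limit
\[
\lim_{\substack{d(x,O)\to \infty \\ x\in X\setminus E(O)}} u(x)=c,
\]
where $c\in\R$ is the constant provided by Lemma~\ref{l4} (which in particular is independent of $O$). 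This is exactly the statement of Theorem~\ref{W,s,p,q 1}.

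The main (and really only) obstacle is conceptual rather than technical: one must notice that quasicontinuity behaves monotonically with respect to capacity comparisons, so that a weaker capacity in the definition yields a larger class of quasicontinuous functions, and the direction of inequality in Lemma~\ref{cap comparison} is exactly the one needed. No Poincar\'e inequality, connectedness, or additional structural hypothesis beyond the standing assumptions is required, because all of the heavy lifting has already been carried out in proving Theorem~\ref{msp haus est} and Lemmata~\ref{W,s,p,q-M,s,p}--\ref{cap comparison}.
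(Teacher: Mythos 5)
Your proposal is correct and follows essentially the same route as the paper's own proof: embed $\dot{W}^{s,p}_q(X)$ into $\dot{M}^{s,p}(X)$ via Lemma~\ref{W,s,p,q-M,s,p}, transfer quasicontinuity through the capacity comparison of Lemma~\ref{cap comparison} with the $\varepsilon/C$ adjustment, and then invoke Theorem~\ref{msp haus est}. No gaps.
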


\begin{proof}
Let $C=C(p,q,c_\mu)>0$  be as in Lemma \ref{cap comparison}.
Let $u\in \dot{W}^{s,p}_q(X)$ be a $W^{s,p}_q$-quasicontinuous function. By Lemma~\ref{W,s,p,q-M,s,p}, we have $u\in \dot{M}^{s,p}(X)$. 
Fix $\varepsilon>0$.
Since $u$ is  $W^{s,p}_q$-quasicontinuous, there exists a set $E\subset X$ such that $\mathrm{Cap}_{W^{s,p}_q}(E)<C^{-1}\varepsilon$ and the restriction of $u$ to  $X\setminus E$ is continuous. Lemma~\ref{cap comparison} implies that
$\mathrm{Cap}_{M^{s,p}}(E)\le C\,\mathrm{Cap}_{W^{s,p}_q}(E)  < \varepsilon$, and therefore
 $u$ is an $M^{s,p}$ quasicontinuous in $X$. Hence, the claim follows by applying Corollary~\ref{msp haus est}.
\end{proof}

\end{document}